\definecolor{blueCol}{rgb}{0.6,0.2,0}
\definecolor{redCol}{rgb}{0,0.2,0.8}
\definecolor{greenCol}{rgb}{0,0.8,0.2}
\def\namedlabel#1#2{\begingroup
    #2%
    \def\@currentlabel{#2}%
    \phantomsection\label{#1}\endgroup
}
\DeclarePairedDelimiter{\floor}{\lfloor}{\rfloor}
\DeclarePairedDelimiter\ceil{ \lceil}{ \rceil}
\theoremstyle{plain}
\newtheorem{theorem}{Theorem}[section]
\newtheorem{lemma}[theorem]{Lemma}
\newtheorem{proposition}[theorem]{Proposition}
\theoremstyle{definition}
\newtheorem{example}[theorem]{Example}
\theoremstyle{remark}
\newtheorem{remark}[theorem]{Remark}
\numberwithin{equation}{section}
\newcommand{\N}{\mathbb{N}}
\newcommand{\R}{\mathbb{R}}
\newcommand{\mX}{\mathbb{X}}
\newcommand{\Z}{\mathbb{Z}}
\newcommand{\p}{\mathbb{P}}
\newcommand{\cB}{\mathcal{B}}
\newcommand{\cC}{\mathcal{C}}
\newcommand{\cD}{\mathcal{D}}
\newcommand{\cG}{\mathcal{G}}
\newcommand{\cF}{\mathcal{F}}
\newcommand{\cK}{\mathcal{K}}
\newcommand{\cN}{\mathcal{N}}
\newcommand{\cO}{\mathcal{O}}
\newcommand{\cP}{\mathcal{P}}
\newcommand{\cU}{\mathcal{U}}
\newcommand{\cQ}{\mathcal{Q}}
\newcommand{\fD}{\mathfrak{D}}
\newcommand{\fS}{\mathfrak{S}}
\newcommand{\E}[1]{\mathbb{E}\left [  #1 \right ]}
\newcommand{\EE}[1]{\mathbb{E} [  #1 ]}
\newcommand{\V}[1]{\operatorname{Var}\left [  #1  \right ] }
\renewcommand{\epsilon}{\varepsilon}
\renewcommand{\phi}{\varphi}
\newcommand{\intd}[1]{\mathrm{d}#1}
\newcommand{\1}[1]{\,\mathds{1}\! \left\{ #1 \right\} }
\newcommand{\cov}[2]{\operatorname{Cov} \left(#1,#2		\right) }
\newcommand{\poi}{\operatorname{Poi}}
\newcommand{\diam}{\operatorname{diam}}
\newcommand{\ol}{\overline}
\newcommand{\wt}{\widetilde}
\definecolor{lightgray}{gray}{.80}
\begin{document}

\title{
On the law of the iterated logarithm and strong invariance principles in stochastic geometry
\footnote{This research was partially supported by the German Research Foundation (DFG), Grant Number KR-4977/1-1.}
}

\author{Johannes Krebs\footnote{Department of Mathematics, TU Braunschweig, 38106 Braunschweig, Germany,\ email: \tt{johannes.krebs@tu-braunschweig.de} }\; \footnote{Corresponding author}
}

\date{\today}
\maketitle

\begin{abstract}
We study the law of the iterated logarithm (\cite{khinchin1924asymptotische}, \cite{andrei1929kolmogorov}) and related strong invariance principles for functionals in stochastic geometry. As potential applications, we think of well-known functionals defined on the $k$-nearest neighbors graph and important functionals in topological data analysis such as the Euler characteristic and persistent Betti numbers.
\setlength{\baselineskip}{1.8em}
\medskip\\
\noindent {\bf Keywords:} Binomial process; Euler characteristic; Law of the iterated logarithm; Persistent Betti numbers; Poisson processes; Stochastic geometry; Strong invariance principles; Strong stabilization; Topological data analysis.

\noindent {\bf MSC 2010:} Primary: 60F15; Secondary: 60D05; 60G55.
\end{abstract}

In the present manuscript, we study the law of the iterated logarithm (LIL) and strong invariance principles (SIP) in stochastic geometry (\cite{penrose2003random}) with a particular focus on its applications in topological data analysis (TDA), for the latter we refer to \cite{zomorodian2005computing}, \cite{carlsson2009topology}, \cite{edelsbrunner2010computational}.

The original statement of the LIL is due to \cite{khinchin1924asymptotische}; another version is due to \cite{andrei1929kolmogorov}. The nowadays more general statement was proved by \cite{hartman1941law}. Let $(X_i)_{i\in\N}$ be independent and identically distributed with mean zero and unit variance, then
\[
			\limsup_{n\to\infty} \frac{1}{\sqrt{2n\log\log n}}\sum_{i=1}^n X_i = 1 \quad a.s.
\]
\cite{strassen1964invariance} introduced the striking functional form of the LIL which has then been recovered in various functional settings, in particular, martingales, see \cite{heyde1973invariance}, \cite{wichura1973some},  \cite{philipp1977functional}, \cite{hall1980martingale}. 
\cite{stout1970hartman} gives another variant of the LIL for martingales.

In the context of dependence the LIL has been investigated for mixing processes with a fast decaying correlation structure as well as for stationary processes, see \cite{philipp1969law}, \cite{oodaira1971law}, \cite{rio1995functional} and \cite{schmuland_sun_2004} as well as \cite{wu2007strong} and \cite{zhao2008law}. \cite{li1992law} and \cite{jiang1999some} study the LIL for random fields. The applications of the LIL in statistical tests and U-statistics were studied by \cite{arcones1995law}, \cite{dehling2009law}. For empirical processes the LIL was studied by \cite{arcones1997law}.  

Strong invariance principles (\cite{komlos1975approximation, komlos1976approximation}) consider the asymptotic approximability of the functional of interest by a Brownian motion. In this context, many deep results have been obtained for time series under various dependence structures, see \cite{philipp1975almost}, \cite{eberlein1986strong}, \cite{wu2007strong}, \cite{berkes2014KMT}.

The present contribution of the LIL and the SIP for functionals in stochastic geometry and TDA relies on the idea of strong stabilization, which dates back to \cite{lee1997central, lee1999central} and which has been popularized in the central limit theorem of \cite{penrose2001central}. We refer to \cite{chazal2017introduction} for an introduction to TDA and persistent homology.

We consider so-called stabilizing and translation invariant functionals $H$ defined on point processes such as the homogeneous Poisson process on $\R^d$. Let $(\mathbb{X}_n)_n$ be a sequence of finite subsamples of such a point process, specified in detail below. We show
\begin{align*}
				\limsup_{n\to\infty} \frac{1}{\sqrt{2n\log\log n}} ( H(\mathbb{X}_n ) - \mathbb{E}[H(\mathbb{X}_n )] ) = \sigma \quad a.s.,
\end{align*}
where $\sigma^2 = \lim_{n\to\infty} n^{-1} Var( H(\mathbb{X}_n ) ) > 0$. Technically our proofs rely on the observation of W. Philipp, who conjectured, the LIL holds for ``any process for which the Borel-Cantelli Lemma, the central limit theorem with a reasonably good remainder and a certain maximal inequality are valid" (\cite{philipp1969law}). Using an approach, which relies on martingale differences, we show that the functional $H(\mathbb{X}_n ) - \mathbb{E}[H(\mathbb{X}_n )]$ can indeed be decomposed in a main term consisting of a partial sum process w.r.t. to a stationary random field and a remainder which behaves ``reasonably" nice given the stabilizing properties of the functional $H$. We consider two point processes as input, namely, the Poisson process and the binomial process in the critical (thermodynamic) regime, see also \cite{yogeshwaran2017random}. Essentially, this setting means that we observe the point process on an increasing sequence of windows $(W_n)_n\subset\R^d$ with $\cup_n W_n = \R^d$.

Moreover, we consider the special case of Poisson and binomial processes defined on a domain $\cD\times [0,n]$, which stretches in only one direction. Apart from recovering the LIL also in this case, we establish a strong invariance principle. More precisely, there is a standard Brownian motion on an enlarged probability space such that 
$$
		H(\mathbb{X}_n)-\mathbb{E}[H(\mathbb{X}_n)] = B(n \sigma^2 ) + r_n,
$$
where up to logarithmic factors and under suitable moment assumptions the remainder $r_n$ is of order $\cO_{a.s.}( n^{1/4} )$ in the Poisson sampling scheme and of order $\cO_{a.s.}( n^{1/4 + 1/p})$ in the binomial sampling scheme, for a certain $p\in\R_+$.  The technique relies on the classical Skorokhod embedding for martingales, see \cite{strassen1964invariance} and \cite{strassen1967almost}. So in the Poisson case and given the technique of the Skorokhod embedding, the rate is maximal (up to a logarithmic factor). In the binomial case, the rate suffers somewhat from the applied Poisson approximation. Compared to the celebrated KMT approximation, the rates are suboptimal. Improving the rates in the SIP and the generalization to a domain $W_n$, which stretches in each dimension, can be considered as an independent problem, which is probably very difficult. We refer to the discussion at the end of Section~\ref{Sec_MainResults}.

In practice, the quantitative stabilizing properties of the functional $H$ need to be checked of course individually. These are however quite immediate for many functionals if we think of $k$-nearest neighbor problems or the Euler characteristic. The latter being the oldest and simplest descriptor of the topological properties of (point cloud) data. For persistent Betti numbers an exact quantification of the stabilizing properties is still open, even though there are first results (\cite{krebs2019asymptotic}). For the special case of 0-dimensional features, i.e., connected components, see \cite{chatterjee2017minimal}, who quantify the stabilization for the minimal spanning tree.

The methods of TDA have been successfully applied in the past to various fields such as finance (\cite{gidea2018topological}), material science (\cite{lee2017quantifying}) or biology (\cite{yao2009topological}). For recent applications involving the Euler characteristic, see \cite{adler2008some}, \cite{decreusefond2014simplicial}, \cite{crawford2016functional}. Multivariate or functional central limit theorems for the Euler characteristic where proved in \cite{hug2016second}, \cite{thomas2019functional}. Ergodic theorems for the Euler characteristic were established in \cite{schneider2008stochastic}. 
The present contribution is not limited to establishing limit theorems in TDA but also lays a certain groundwork of statistically sound testing procedures in the context of stochastic geometry, see also \cite{robbins1970statistical}, \cite{lerche1986sequential}, \cite{wu2005bahadur} for potential applications.

The rest of the paper is organized as follows. We introduce the basic notation and the stochastic model in Section~\ref{Sec_Preliminaries}. The main results are stated in Section~\ref{Sec_MainResults}. The technical details are given in Section~\ref{Section_Proofs}, some of them are deferred to the Appendix~\ref{Appendix}.

\section{Preliminaries}\label{Sec_Preliminaries}

Let $H$ be a real-valued functional defined for all finite subsets of $\R^d$. We assume that $H$ is translation invariant, i.e., $H( P + x) = H(P)$ for all finite $P\subset \R^d$ and $x\in\R^d$. Classical results on limit theorems in this context rely on the add-one cost function of $H$ which is defined for a finite set of points $P\subset\R^d$ by
\[	\fD_0( P) \coloneqq H( P \cup \{0\} ) - H(P).
\]
Let $\cP$ be a homogeneous Poisson process of unit intensity on $\R^d$. The functional $H$ is strongly stabilizing on $\cP$ if there exist $a.s.$ finite random variables $S$ (the so-called radius of stabilization) and $\fD_0(\cP,\infty)$ such that
\begin{align}\label{D:StrongStab}
		\fD_0( (\cP\cap B(0,S)) \cup A ) = \fD_0( \cP,\infty) \text{ with probability 1}
\end{align}
for all finite $A\subset \R^d\setminus B(0,S)$, where $B(z,r)$ is the closed $r$-neighborhood of $z$ w.r.t. the Euclidean distance $d$ on $\R^d$ for $z\in\R^d$ and $r\in\R_+$. 

Before we introduce the models, we begin with some general notation. Let $(S,\fS,\mu)$ be some generic measure space and $q\ge 1$. Let $f\in L^q(S,\fS,\mu)$. We denote its $q$-norm by $\|f\|_q$. Given a $d$-dimensional point cloud $P$ and $z\in\R^d$, we write $P+z$ for the translated point cloud $\{p+z: p\in P\}$. For a set $A\subset\R^d$ and $\delta\in\R_+$, we write $A^{(\delta)}$ for the collection of all points with a distance to $A$ of at most or equal to $\delta$. For a finite set $A$ we write $\# A$ for its cardinality. If $A\subset\R^d$ is infinite, we write $|A|$ for its Lebesgue measure. The diameter of $A$ w.r.t.\ the Euclidean distance is abbreviated by $\diam(A)$. 
Let $A,B\subset\R^d$, then we write $d(A,B) = \inf\{ d(x,y): x\in A, y\in B\}$.

We will use a sequence of observation windows given by $W_n = [-n^{1/d}/2,n^{1/d}/2]^d$, $n\in\N$. (A generalization to other observation windows is a routine, however, in the present contribution, we do not treat this any further.) The topological boundary of $W_n$ is $\partial W_n$. Let $Q_0 = [-1/2, 1/2]^d$ and $Q_z = Q_0 + z$, $z\in\R^d$. Moreover, we define the index set $B_n = \{z\in\Z^d: Q_z \cap W_n \neq \emptyset \}$. Then $\# B_n / n \to 1$ as $n\to \infty$.
Also set $D_n = \{ z\in\Z^d: \| z\|\le n \}$, where $\|\cdot\|$ is the $\infty$-norm on $\Z^d$.

Let $\cP'$ be another homogeneous Poisson process of unit intensity on $\R^d$ independent of $\cP$. Write $\cP''_z$ for $(\cP \setminus Q_z) \cup ( \cP' \cap Q_z)$, $z\in\R^d$. We write $\cP_n$ ($\cP'_n$, $\cP''_{n,z}$) for the restriction of $\cP$ ($\cP'$, $\cP''_z$) to $W_n$. 

Furthermore, for each $n,m\in\N$, we introduce a binomial process $\cU_{n,m}$ of length $m$ on $W_n$, whose elements have a uniform distribution on $W_n$. We use the following coupling. Set $N_n = \cP( W_n )$. $N_n$ is Poisson distributed with parameter $n$ and $N_n \le N_{n+1}$ for each $n\in\N$. Let the Poisson process $\cP_n$ be given by $X_1,\ldots,X_{N_n}$.  Let $V_{n,1},V_{n,2},\ldots$ be i.i.d.\ on $W_n$. Define the $n$th binomial process on $W_n$ by
\begin{align}
\begin{split}\label{E:CouplingBinom}
		\cU_{n,m} &= \{ X_{i}: i \in \{1,\ldots,m \wedge N_n \} \} \\
		&\qquad \cup \{ V_{n,k }: k \in \{1,\ldots, (m-N_n)\wedge 0 \} \}, \quad m\in\N.
	\end{split}
\end{align}
Write $\preceq$ for the lexicographic ordering on $\Z^d$. For $z\in\Z^d$, let $\cF_z$ be the smallest $\sigma$-field such that the Poisson points of $\cP$ in $\cup_{y \preceq z} Q(y)$ are measurable, that is $\cF_z = \sigma( \cP|_{Q(y)}: y\in\Z^d, y \preceq z )$.

We study two models which have different stabilizing and growth properties. The functional $H$ satisfies a hard-thresholded stabilizing condition in model~\ref{itm:model1}, e.g., the Euler characteristic. In model~\ref{itm:model2}, the functional $H$ is exponentially stabilizing but satisfies a polynomial growth condition, for functionals with these properties see \cite{penrose2001central}.
\begin{itemize}
    \item [\namedlabel{itm:model1}{(M1)}] \textit{Hard-thresholded stabilization.} Let $P$ be a finite point cloud. Then $H$ satisfies \eqref{D:StrongStab}, where $S\le S^*$ $a.s.$ for some $S^*\in\R_+$. Moreover, for all locally finite point clouds $P$
    \begin{align}\label{C:GrowthM1}
    	H( P \cap B(z,S^*) ) \le C^* e^{q N(z)},
	\end{align}
	  for some $C^*,q\in\R_+$ and $N(z) = P( Q_z^{(\delta)} )$, where $\delta>0$ is independent of $z\in\R^d$.
    
    \item  [\namedlabel{itm:model2}{(M2)}] \textit{Exponentially stabilizing and polynomially bounded functionals.} 
		The uniform bounded moments condition is given in terms of the binomial process. Given a set $A$ from the class of sets 
		$$\cB=\{ W_n + z, B(z,w), (W_n + z) \cap B(z,w) : n\in \N, w\in\R_+, z\in\R^d \},$$
let $\cU_{A,m}$ be a binomial process of length $m$ on $A$. We require a uniform bounded moments condition to be satisfied. The order of the moment is characterized by two parameters and differs depending on the sampling scheme as follows. For \textit{Poisson sampling} let $p> 2d \vee 4$ and $r :\equiv \infty$ (so that $r/(r-1) = 1$). For \textit{Binomial sampling} let $p\in\N$, $p$ even, such that $p> 2d \vee 6$ and $r\in (1, (2p-4)/(p+2) )$. Then
		\begin{align}\label{C:UniformBoundedMoments}
			\sup_{A\in\cB, 0\in A }	\sup_{m\in [1/2 |A|, 3/2 |A| ]} \E{  | 				H(\cU_{A,m} \cup \{0\} ) - H(\cU_{A,m}) |^{p r/(r-1)  } } < \infty
		\end{align}
Furthermore, the functional $H$ is polynomially bounded in the sense that $|H(P)| \le \nu ( \diam(P) + \# P )^\nu$ for a constant $\nu\in\R_+$.

Denote $\cC_{y,n}$ the cube with center $y$ and edge length $n^{1/d}$, $y\in\R^d$ and $n\in\N$. Let $V_1,V_2,\ldots,$ be a sequence of i.i.d. random variables with uniform distribution on $Q_0$. Let $x\in Q_0$, $k,n\in\N$ and $y\in\R^d$. Consider the point cloud $P(y,n,k,x) =  ( (\cP|_{\cC_{y,n} }\setminus Q_0 ) \cup \{V_1,\ldots,V_k\} ) - x$. There is an $a.s.$ finite random radius $\wt S = \wt S(y,n,k,x)$ such that
\begin{align}\begin{split}\label{E:GeneralStabilization}
			&\fD_0(  P(y,n,k,x) \cap B(0,\wt S) ) = \fD_0( [  P(y,n,k,x) \cap B(0,\wt S) ]   \cup A )
\end{split}\end{align}
for all finite $A \subset \R^d\setminus B(0,\wt S)$ and $\wt S$ satisfies: There are constants $c_1,c_2>0$ with
\begin{align}\label{C:ExpStabilization}
			\sup \ \p( \wt S_{y,n,k,x} > r ) \le c_1 \exp( -c_2 r  ), 
			\quad r \in (0, \infty),
\end{align}
where the supremum is taken over all possible parameters $x\in Q_0, k,n\in\N, y\in\R^d$.
\end{itemize}

The stabilizing condition in model~\ref{itm:model1} features some kind of $m$-dependence, we will see this later in the proofs. Similarly, the exponential stabilizing condition in \ref{itm:model2} features a certain analogy to exponential mixing conditions.

One finds that the growth condition from \eqref{C:GrowthM1} and the hard-thresholded stabilization in model~\ref{itm:model1} imply the uniform bounded moments condition \eqref{C:UniformBoundedMoments} in model~\ref{itm:model2} for each $p\in\N$. However, in contrast to model~\ref{itm:model2}, we do not assume an overall polynomial bound in model~\ref{itm:model1}.

The order of the uniform bounded moments condition varies depending on the model. In the Poisson sampling scheme, the order is greater than $2d\vee 4$, whereas it is significantly greater in the binomial sampling scheme. It is very likely that this latter bound is suboptimal. However, improving the bounds without additional assumptions is probably very difficult and can be considered as an independent problem. Technically, the results for the binomial process suffer from the applied Poissonization arguments.

The CLT of \cite{penrose2001central} for strongly stabilizing and polynomially bounded functionals $H$ requires a similar uniform bounded moments condition to be satisfied with $p=4$. In order to establish the LIL and and the SIP, we need to quantify the impact of the moment condition on the order of the approximation.

In the context of normal approximation, many functionals have been extensively studied under various stabilizing conditions, see \cite{lachieze2019normal}. Popular examples are functionals defined the $k$-nearest neighbor graph (\cite{bickel1983sums}) or functionals defined on Voronoi tessellations (\cite{schulte2012voronoi}, \cite{thaele2016voronoi}). In this contribution, we study the Euler characteristic of simplicial complexes and the total edge length of the $k$-nearest neighbor graph. 

\begin{example}[Euler characteristic]\label{Ex:EC}
Let $P\subset\R^d$ be a finite point cloud. Let $\cK$ be a simplicial complex obtained from $P$ by a rule which puts a uniform upper bound on the diameter of a simplex, e.g., the {\v C}ech or the Vietoris-Rips complex for a certain filtration parameter $r\in\R_+$. Then the Euler characteristic of $\cK$ is	$\chi = \sum_{k=0}^\infty (-1)^k S_k( \cK )$, where $S_k$ is the number of $k$-dimensional simplices in $\cK$. Similar to (persistent) Betti numbers, the Euler characteristic is a topological invariant, see \cite{hatcher}. Since the diameter of each simplex in $\cK$ is uniformly bounded above by some $r\in\R_+$, when adding a point $z$ to the point cloud $P$, only those points in the $r$-neighborhood of $z$ can form new simplices involving $z$. Hence, the Euler characteristic enjoys the hard-thresholded stabilization of model~\ref{itm:model1}. Moreover, using the rule that $m$ points can form at most $\binom{m}{k+1}$ many $k$-simplices for $0\le k\le m-1$, one sees that also the growth conditions of model~\ref{itm:model1} are satisfied, see also Section~\ref{Section_VerificationExample}.
\end{example}

\begin{example}[Total edge length in the $k$-nearest neighbor graph]\label{Ex:kNN}
Let $H$ be the total edge length obtained from the $k$-nearest neighbor graph of a set of points. \cite{penrose2001central} show the asymptotic normality of this functional, they also give details on the uniform bounded moments condition. Rates of convergence in the Kolmogorov distance are given in \cite{lachieze2019normal}, here one also needs to employ an exponential stabilizing condition similar as in \eqref{C:ExpStabilization}. We show in the Section~\ref{Section_VerificationExample} that the total edge length $H$ satisfies the uniform stabilization property from model~\ref{itm:model2} on the homogeneous Poisson process $\cP$.
\end{example}

\section{Main results}\label{Sec_MainResults}

In this section we study the rate of convergence of $n^{-1}(H(\mX_n)-\E{ H(\mX_n)})$. In general for stationary and ergodic processes $(X_n)_n$, their empirical mean $n^{-1}\sum_{i=1}^n X_i$ can converge to $\E{X_0}$ arbitrarily slowly (see \cite{krengel2011ergodic}). However, given appropriate moment conditions and decay rates for the dependence structure of the stochastic model as in \ref{itm:model1} and \ref{itm:model2}, one obtains the optimal rates known from i.i.d.\ data, which are given in terms of the law of the iterated logarithm.

In the following, we will need the notion of the random variables $\Delta(z,\infty)$, $z\in\Z^d$, which satisfy together with the radius of stabilization $S_z$, say, the requirement
\begin{align}\begin{split}\label{D:S0}
				 &H( [\cP \cap B(z,S_z)]\cup A ) - H( [\cP''_z \cap B(z,S_z)]\cup A ) \\
				 &\qquad\qquad = H( \cP \cap B(z,S_z) ) - H( \cP''_z \cap B(z,S_z) ) =: \Delta(z,\infty) \quad a.s.
\end{split}\end{align}
for all $A\subset \R^d\setminus B(z,S_z)$. The existence of $\Delta(z,\infty)$ follows from the assumptions in both models, see \cite{penrose2001central} for a rigorous proof. We state the LIL for both models~\ref{itm:model1} and \ref{itm:model2} for an underlying sequence of Poisson processes $(\cP_n)_n$.

\begin{theorem}[LIL for the Poisson process]\label{T:LILPoisson}
Assume that $\fD_0(\cP,\infty)$ is nondegenerate. Let $\sigma = \sqrt{ \mathbb{E}[ \mathbb{E}[ \Delta(0,\infty) | \cF_0 ] ^2 ] }$ which is positive. Then, for either choice of sign,
\begin{align*}
		\limsup_{n\to\infty}  \pm \frac{1}{\sqrt{ 2 n \log \log n}} ( H(\cP_n) - \E{H(\cP_n)} ) = \ \sigma \quad a.s.
\end{align*}
\end{theorem}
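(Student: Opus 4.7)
The plan is to decompose $H(\cP_n) - \E{H(\cP_n)}$ as a $\cF_z$-martingale built from the lexicographic enumeration of $B_n$, approximate these martingale differences by a stationary, translation-invariant field, and then apply a martingale LIL to the stationary part while absorbing the boundary error into a lower-order remainder. Concretely, for $z\in B_n$ set
$D_{n,z} := \E{H(\cP_n)\mid \cF_z} - \E{H(\cP_n)\mid \cF_{z-}}$,
where $z-$ is the lexicographic predecessor of $z$. Because resampling $\cP$ in $Q_z$ leaves the $\cF_{z-}$-conditional law invariant, $\E{H(\cP_n)\mid \cF_{z-}} = \E{H(\cP''_{n,z})\mid \cF_z}$, and hence $D_{n,z} = \E{H(\cP_n) - H(\cP''_{n,z})\mid \cF_z}$. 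The natural infinite-volume counterpart is the stationary field $\xi_z := \E{\Delta(z,\infty)\mid \cF_z}$, which is an $\cF_z$-martingale difference with $\xi_z \stackrel{d}{=} \xi_0$ and $\E{\xi_0^2} = \sigma^2 > 0$ by hypothesis.

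Next I would control the remainder $R_{n,z} := D_{n,z} - \xi_z$. By definition \eqref{D:S0}, on the event that the stabilization radius around $z$ is smaller than $d(z,\partial W_n)$, one has $H(\cP_n) - H(\cP''_{n,z}) = \Delta(z,\infty)$, so $R_{n,z}$ is supported on a boundary-localized event. In model~\ref{itm:model1}, the hard threshold $S\le S^*$ forces $R_{n,z}=0$ whenever $d(z,\partial W_n) > S^*$, so only $O(n^{(d-1)/d})$ cubes contribute, and the growth bound \eqref{C:GrowthM1} yields $\|\sum_{z\in B_n} R_{n,z}\|_2 = O(n^{(d-1)/(2d)}) = o(\sqrt{n \log\log n})$. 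In model~\ref{itm:model2}, the exponential tail \eqref{C:ExpStabilization} together with the polynomial growth and the uniform moment bound \eqref{C:UniformBoundedMoments} gives the same conclusion up to polylog factors. To upgrade this $L^2$-estimate to an a.s.\ statement, I would pass to the geometric subsequence $n_k = 2^k$, apply Doob's $L^p$ maximal inequality with $p>2$ to control oscillations of the remainder martingale on $[n_k, n_{k+1}]$, and invoke Borel--Cantelli; this is where moments strictly larger than those required by Penrose's CLT come in.

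For the principal term $M_n := \sum_{z\in B_n} \xi_z$, I would appeal to the martingale LIL of Heyde--Scott (or Stout). Since the homogeneous Poisson process is mixing, hence ergodic, under the $\Z^d$-shift, the multiparameter Birkhoff theorem applied to the translation-invariant field $(\xi_z^2)$ gives the a.s.\ convergence of the conditional quadratic variation $\langle M\rangle_n / n \to \sigma^2$; the conditional Lindeberg condition follows from the uniform integrability of $\xi_0^2$ provided by \eqref{C:GrowthM1} / \eqref{C:UniformBoundedMoments}. The martingale LIL then gives $\limsup_n \pm M_n / \sqrt{2n\log\log n} = \sigma$ a.s., and combining with the remainder bound finishes the proof.

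The main obstacle, in my view, is not the martingale LIL itself but the a.s.\ negligibility of the boundary remainder $\sum_z R_{n,z}$ \emph{uniformly} in $n$. The lexicographic order is not rotationally symmetric, so for arbitrary (non-dyadic) $n$ the partial sums defining $D_{n,z}$ include cubes whose predecessor set is not fully contained in $W_n$; a single maximal inequality along $n_k=2^k$ must therefore be complemented by an oscillation estimate on $[n_k, n_{k+1}]$, and it is here that the stabilization radius has to be exploited quantitatively through the model-specific tail bounds, rather than just through the pointwise $L^2$ estimate.
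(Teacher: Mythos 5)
Your opening decomposition coincides with the paper's: $H(\cP_n)-\E{H(\cP_n)}=\sum_{z\in B_n}\E{\Delta(z,n)\mid\cF_z}$, split into the stationary field $F_z=\E{\Delta(z,\infty)\mid\cF_z}$ plus a boundary remainder, and your remainder analysis (orthogonality of the martingale differences giving an $L^p$ bound of order $n^{(d-1)/(2d)}$ localized near $\partial W_n$, then a dyadic maximal argument) is in the same spirit as Proposition~\ref{P:OrderRemainderPoisson} combined with Lemma~\ref{L:LILTool}. One caveat there: $n\mapsto\sum_{z\in B_n}\E{\Delta(z,n)-\Delta(z,\infty)\mid\cF_z}$ is not a martingale in $n$ (the summands themselves depend on $n$), so Doob's inequality is not directly available; the paper instead pays the crude factor $n^{1/p}$ via $\E{\max_{k\le n}|\cdot|^p}\le n\max_{k\le n}\E{|\cdot|^p}$, which is exactly where the requirement $p>2d$ enters.

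The genuine gap is in the main term. For $d\ge 2$ the sums $M_n=\sum_{z\in B_n}\xi_z$ are \emph{not} partial sums of a one-parameter martingale: the windows $W_n$ are centered at the origin, so $B_{n+1}\setminus B_n$ contains cubes that are lexicographically smaller than cubes already in $B_n$, and no enumeration of $\Z^d$ simultaneously makes every $B_n$ an initial segment and preserves the martingale-difference property, which is tied to the lexicographic filtration through $\E{\xi_z\mid\cF_{z-}}=0$. The Heyde--Scott and Stout LILs are therefore not applicable as stated, and your ergodic-theorem computation of $\langle M\rangle_n/n$ and the conditional Lindeberg condition, even if correct, feed into a theorem whose hypotheses are not met; you would in effect have to prove a random-field martingale LIL. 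This is precisely why the paper proves a bespoke LIL for the stationary field (Theorem~\ref{T:GeneralLIL}), in the Philipp/Schmuland--Sun style: a Berry--Esseen-type normal approximation over big blocks with a polynomial rate (Proposition~\ref{P:NormalApproximation}), a maximal inequality (Proposition~\ref{P:MaximalInequality}), and exponential covariance decay obtained from stabilization (Lemma~\ref{L:Covariance}); the upper bound then comes from the first Borel--Cantelli lemma along a geometric subsequence and the lower bound from the second Borel--Cantelli lemma, where the near-independence of increments over well-separated annuli is supplied by the covariance estimate rather than by any martingale structure. Your route does work essentially as written for the one-dimensional domain $\cD\times[0,n]$ of Section~\ref{OneDimDomain}, where the sums genuinely are one-parameter martingale partial sums and the paper itself switches to martingale techniques.
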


In the same spirit, we obtain the LIL for the sequence of binomial processes $(\cU_{n,n})_n$.

\begin{theorem}[LIL for the binomial process]\label{T:LILBinomial}
Assume that $\fD_0(\cP,\infty)$ is nondegenerate. Then, for either choice of sign,
\begin{align*}
		\limsup_{n\to\infty} \pm \frac{1}{\sqrt{ 2 n \log \log n}} ( H( \cU_{n,n} ) - \E{H( \cU_{n,n} )} ) = \tau \quad a.s.,
\end{align*}
where $\tau = \sqrt{ \sigma^2 - \alpha^2 }$ is positive for $\sigma = \sqrt{ \mathbb{E}[ \mathbb{E}[ \Delta(0,\infty) | \cF_0 ] ^2 ] }$ and $\alpha = \E{ \fD_0 (\cP,\infty) }$.
\end{theorem}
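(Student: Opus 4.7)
The plan is to derive the binomial LIL from Theorem~\ref{T:LILPoisson} via a de-Poissonization argument based on the coupling \eqref{E:CouplingBinom}. Under that coupling the symmetric difference $\cP_n \symdiff \cU_{n,n}$ has exactly $|N_n - n|$ points; since $N_n$ is Poisson with mean $n$, the classical LIL for independent summands yields $|N_n - n| = \cO_{a.s.}(\sqrt{n\log\log n})$.

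The first step is to establish the decomposition
\begin{equation*}
H(\cU_{n,n}) - \E{H(\cU_{n,n})} = \bigl(H(\cP_n) - \E{H(\cP_n)}\bigr) - \alpha(N_n - n) + R_n,
\end{equation*}
where $R_n$ is a remainder to be controlled and $\alpha = \E{\fD_0(\cP,\infty)}$. To obtain this, I write $H(\cU_{n,n}) - H(\cP_n)$ as a telescoping sum of $|N_n - n|$ add-one/remove-one costs associated with the points in the symmetric difference; by strong stabilization each such cost agrees, off an event of controlled probability, with $\fD_0$ evaluated on a locally independent configuration, so replacing it by its common mean $\alpha$ introduces only a centered fluctuation into $R_n$. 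The standard Penrose-style identity $\E{H(\cU_{n,n})} - \E{H(\cP_n)} = \cO(1)$ absorbs the mean-level correction into $R_n$.

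The second step is the LIL for the de-Poissonized quantity $S_n := (H(\cP_n) - \E{H(\cP_n)}) - \alpha(N_n - n)$. Using the filtration $(\cF_z)_{z\in B_n}$ as in the proof of Theorem~\ref{T:LILPoisson}, I decompose $S_n$ as a stationary martingale sum $\sum_z D_z$ plus a boundary/stabilization remainder; the relevant increment in the stationary limit is
\begin{equation*}
D_0 \;=\; \E{\Delta(0,\infty)\,|\,\cF_0} \;-\; \alpha\bigl(\cP(Q_0)-1\bigr).
\end{equation*}
A direct computation based on the identity $\E{\Delta(0,\infty)\,|\,\cP(Q_0),\cP'(Q_0)} = \alpha(\cP(Q_0)-\cP'(Q_0))$ (which follows from strong stabilization applied to each of the $\cP(Q_0)$ removed and $\cP'(Q_0)$ inserted points) yields $\E{\Delta(0,\infty)(\cP(Q_0)-1)} = \alpha$, and therefore
\begin{equation*}
\E{D_0^2} \;=\; \sigma^2 - 2\alpha^2 + \alpha^2 \;=\; \sigma^2 - \alpha^2 \;=\; \tau^2.
\end{equation*}
Positivity $\tau^2 > 0$ is inherited from the nondegeneracy of $\fD_0(\cP,\infty)$ together with the extra independent Poisson contribution of $\cP(Q_0) - 1$. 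Applying the Stout/Heyde LIL for stationary square-integrable ergodic martingale differences to $\sum_z D_z$, and controlling the boundary/stabilization remainder by the same arguments as in Theorem~\ref{T:LILPoisson}, gives $\limsup_{n\to\infty} \pm S_n / \sqrt{2n\log\log n} = \tau$ almost surely.

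The third step, and the main obstacle, is to show $R_n = o(\sqrt{n\log\log n})$ almost surely. The remainder is a sum of $\cO_{a.s.}(\sqrt{n\log\log n})$ centered contributions whose $p$-th moments are controlled by \eqref{C:UniformBoundedMoments}. Conditional on $N_n$ and the coupling data, a Rosenthal/Burkholder-type inequality bounds the $p$-th moment of $R_n$, and Borel-Cantelli along the geometric subsequence $n_k = 2^k$ combined with a maximal inequality for interpolation gives the required rate. The difficulty is precisely that standard de-Poissonization operates at the $L^2$/CLT level, and lifting it to a pointwise LIL-scale statement is what forces the strengthened moment hypothesis $p > 2d \vee 6$ in \eqref{C:UniformBoundedMoments} and requires careful joint control of $R_n$ with the random count $|N_n - n|$ so that the fluctuation of the number of modifications does not inflate $R_n$ beyond the iterated-logarithm scale.
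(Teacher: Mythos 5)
Your overall architecture matches the paper's: couple $\cU_{n,n}$ with $\cP_n=\cU_{n,N_n}$, peel off the de-Poissonized main term $H(\cP_n)-\E{H(\cP_n)}-\alpha(N_n-n)$, identify its limiting variance as $\tau^2=\sigma^2-\alpha^2$, and control the remainder at scale $o(\sqrt{n\log\log n})$. But two of your steps contain genuine gaps. First, the identity $\E{\Delta(0,\infty)\,|\,\cP(Q_0),\cP'(Q_0)}=\alpha(\cP(Q_0)-\cP'(Q_0))$ is false in general: when you telescope the swap of the $Q_0$-points, the $i$-th add-one (or remove-one) cost is computed relative to a configuration in which the other $Q_0$-points have already been modified, so its conditional mean is not $\alpha$. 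For $H$ equal to the number of pairs of points within distance $1$, the conditional expectation contains the terms $\binom{\cP(Q_0)}{2}-\binom{\cP'(Q_0)}{2}$ and is not linear in $\cP(Q_0)-\cP'(Q_0)$. The consequence you actually need, $\E{\Delta(0,\infty)(\cP(Q_0)-1)}=\alpha$, is nonetheless true, but it requires either a Mecke-equation computation or, as in the paper's Lemma~\ref{L:TauSigma}, an indirect identification of $\lim_n n^{-1}\cov{H(\cU_{n,N_n})}{N_n}$ via uniform integrability and the normal approximation. Second, for $d\ge 2$ you cannot invoke the Stout--Heyde martingale LIL for $\sum_{z\in B_n}D_z$: the lexicographic enumerations of the nested index sets $B_n$ are not initial segments of one fixed stationary ergodic martingale-difference sequence, which is exactly why the paper develops the random-field LIL (covariance decay, a Berry--Esseen-type normal approximation, a maximal inequality, and Theorem~\ref{T:GeneralLIL} together with Remark~\ref{R:Binomial}) rather than citing a one-parameter martingale result.

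Third, the step you correctly single out as the main obstacle, namely $R_n=o_{a.s.}(\sqrt{n\log\log n})$, is not delivered by a conditional Rosenthal/Burkholder inequality, because the increments $H(\cU_{n,m+1})-H(\cU_{n,m})-\alpha$ are neither martingale differences nor exactly centered. The paper's Proposition~\ref{P:LILBinomialError} instead carries out a bespoke $p$-th moment expansion over tuples $(m_1,\ldots,m_p)$, exploiting the spatial separation events $A_k=\{d(Y_{m_k},\cup_{j\neq k}Y_{m_j})>2S^*\}$ to gain approximate independence and a factor of order $n^{-(1/2-1/p)}$ for each isolated singleton index, followed by a combinatorial count of tuples; this is precisely where the hypotheses $p>2d\vee 6$ and $r\in(1,(2p-4)/(p+2))$ enter. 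Your sketch names the right target but omits the mechanism that makes a sum of $|N_n-n|$ non-centered, dependent increments behave like $|N_n-n|^{1/2}$ in $L^p$.
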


The key idea of Theorem~\ref{T:LILPoisson} is to approximate $(H(\cP_n)-\EE{H(\cP_n)})_n$ by the stationary process $(F_z)_{z\in B_n}$, where $F_z = \E{ \Delta(z,\infty) | \cF_z }$, and to show that the difference is of order $o(n^{1/2}\log\log n)$ with probability 1. Then we apply standard techniques of the LIL for stationary random fields which are for instance also considered in \cite{schmuland_sun_2004}. In order to obtain Theorem~\ref{T:LILBinomial}, we rely on the classical Poissonization trick and couple $H(\cU_{n,n})$ with $H(\cP_n)$ at an accuracy of $o(n^{1/2}\log\log n)$.

We compare our results to the case of the LIL for a causal stationary process $(X_i)_i$ with $X_i=g(\xi_i)$, where $\xi_i = (\epsilon_i,\epsilon_{i-1},\ldots)$ and where $g$ is a real-valued Borel-measurable function aggregating index dependent a part of the i.i.d.\ sequence $(\epsilon_j)_j$. In this setting the variance expression is very similar, see, e.g., \cite{wu2007strong}. The deeper connection between the causal stationary process and the current setting becomes most apparent if we consider the leading term of the Poisson sampling scheme $H(\cP_n)-\E{H(\cP_n)}$, which is 
$$
	\sum_{z\in B_n} \E{ \Delta(z,\infty) | \cF_z } = \sum_{z\in B_n} \E{ H(\cP\cap B(z,S_z)) - H(\cP''_{z} \cap B(z,S_z)) | \cF_z }.
$$
For the causal stationary process, the leading term of the partial sum $\sum_{k=1}^n X_k$ is
$
	\sum_{k=1}^n \mathbb{E}\Big[ \sum_{i\ge k} g(\xi_i) - g(\xi'_i) | \cF_k \Big],
$
where $\xi'_i = (\epsilon_i,\ldots,\epsilon_1,\epsilon'_0, \epsilon_{-1},\ldots)$ for an i.i.d.\ copy $\epsilon'_0$, see \cite{wu2007strong}. Essentially, the increasing index sets combined with the stationarity lead now to a very similar behavior of the two partial sums which in turn ultimately leads to the same results.

\subsection{The special case of a one-dimensional domain}\label{OneDimDomain}
In this section, we consider a slight modification of the above model. Instead of using the observation windows $(W_n)_n$, we observe the Poisson or the binomial process on a cylinder-like domain $\wt W_n = \cD \times [0,n]$, which stretches only in one direction. Here $\cD\subset \R^{d-1}$ is a bounded and convex set. In this case, we can easily recover the LIL for both models. Moreover,  using a Skorokhod embedding, we obtain a strong invariance principle. 

We need to adjust slightly the definitions to the new domain, that is all quantities describing the models \ref{itm:model1} and \ref{itm:model2} are defined in terms of the domain $\cD\times \R$, in particular, the add-one cost function $ \fD_0 = \fD_0(\cP|_{\cD\times \R} ,\infty)$, the radius of stabilization $S$ and the random variable $\Delta(0,\infty) = \Delta_{\cP|_{\cD\times \R} }(0,\infty)$. Also all point processes $P = P|_{\cD\times \R}$ are restricted to this domain. The $\sigma$-fields are adjusted, we use $\wt \cF_z = \sigma( \cP|_{\cD \times (-\infty,1/2 + y] }: y\in\Z, y\le z)$ instead. 

A particular interesting functional $H$ in this framework is the persistent Betti number obtained from the Vietoris-Rips or the {\v C}ech complex of a point cloud in $\cD\times\R$. It is well-known that the persistent Betti number is polynomially bounded and satisfies the uniform bounded moments condition for each $p\in\N$, this follows comparably directly from calculations similar as in \cite{yogeshwaran2017random}. Moreover, since the domain $\cD$ is bounded in this 1-dimensional setting, we do not observe percolation effects and the radius of stabilization decays at an exponential rate as required for model~\ref{itm:model2}, see also \cite{krebshirsch2020} for more details.

\begin{theorem}[1-dimensional LIL]\label{T:1DLIL}
Let either $\cQ$ be a Poisson process of unit intensity on $\cD\times \R$ and $\cQ_n = \cQ|_{\wt W_n}$ or let $\cQ_n$ be a binomial process of length $n$ and uniform distribution on $\wt W_n$. If $\fD_0(\cQ|_{\cD\times \R} ,\infty)$ is nondegenerate, then, for either choice of sign,
\begin{align*}
		\limsup_{n\to\infty} \pm  \frac{1}{\sqrt{ 2 n \log \log n}}( H(\cQ_n) - \E{H(\cQ_n)} ) = \ \nu \quad a.s.,
\end{align*}
where $\nu^2$ is $\mathbb{E}[ \mathbb{E}[ \Delta_{\cQ|_{\cD\times \R} }(0,\infty)| \wt \cF_0]^2 ]$  in the Poisson model and $\mathbb{E}[ \mathbb{E}[ \Delta_{\cQ|_{\cD\times \R} }(0,\infty)|\wt \cF_0]^2 ] - \mathbb{E}[\fD_0(\cQ|_{\cD\times \R}, \infty)]^2 $ in the binomial model.
\end{theorem}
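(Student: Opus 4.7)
The plan is to exploit that on the cylinder $\wt W_n = \cD \times [0,n]$ the filtration $(\wt\cF_z)_{z\in\Z}$ is linearly ordered (since $\cD$ is bounded and only the height dimension is large), so the $\Z^d$-lexicographic martingale construction used in Theorems~\ref{T:LILPoisson}--\ref{T:LILBinomial} collapses to a genuine one-dimensional stationary ergodic martingale difference sequence, to which the classical Heyde--Scott/Stout martingale LIL applies directly. The Poisson and binomial cases are then linked by the Poissonization coupling \eqref{E:CouplingBinom} adapted to $\wt W_n$.

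For the Poisson case $\cQ_n = \cP|_{\wt W_n}$, I would form the telescoping martingale decomposition
\[
H(\cQ_n) - \E{H(\cQ_n)} = \sum_{z=1}^{n} D_{z,n}, \qquad D_{z,n} = \E{H(\cQ_n)\mid \wt\cF_z} - \E{H(\cQ_n)\mid \wt\cF_{z-1}},
\]
and approximate each $D_{z,n}$ by the stationary, ergodic martingale difference $F_z := \E{\Delta_{\cP|_{\cD\times\R}}(z,\infty)\mid \wt\cF_z}$, whose squared expectation equals $\nu^2$. Under the hard-thresholded stabilization of \ref{itm:model1} (respectively the exponential stabilization of \ref{itm:model2}) on $\cD\times\R$, combined with the prescribed uniform bounded moments condition, the $L^2$-gap $\|D_{z,n}-F_z\|_2$ is summable away from the two endpoints $z\in\{1,n\}$ and the boundary contribution is $O(1)$ per slab. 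A dyadic Borel--Cantelli/Doob maximal-inequality argument upgrades this to the pathwise bound $\sum_{z=1}^n (D_{z,n}-F_z) = o(\sqrt{n\log\log n})$ a.s., and Heyde--Scott's martingale LIL applied to the stationary ergodic sequence $(F_z)$ delivers the claim in the Poisson model.

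For the binomial input, I would use the coupling \eqref{E:CouplingBinom} on $\wt W_n$: conditional on $N_n := \cP(\wt W_n)$ the processes $\cU_{n,n}$ and $\cP|_{\wt W_n}$ differ by inserting or deleting $|N_n - n|$ uniform points, and strong stabilization together with the uniform moments bound shows that the associated pathwise change in $H$ equals $-(N_n-n)\alpha$ up to an $o(\sqrt{n\log\log n})$ a.s.\ remainder, where $\alpha = \E{\fD_0(\cP|_{\cD\times\R},\infty)}$. Writing
\[
H(\cQ_n) - \E{H(\cQ_n)} = \bigl(H(\cP|_{\wt W_n}) - \E{H(\cP|_{\wt W_n})}\bigr) - \alpha(N_n - n) + r_n,
\]
the right-hand side, via the decomposition $H(\cP|_{\wt W_n}) - \E{H(\cP|_{\wt W_n})} = \sum_z F_z + o(\sqrt{n\log\log n})$ and the analogous martingale expansion of $N_n-n$ on the slabs $\cD\times(z-\tfrac12,z+\tfrac12]$, rearranges into a single stationary martingale difference sum whose variance equals $\sigma^2 - \alpha^2 = \nu^2$ exactly as in the variance computation of Theorem~\ref{T:LILBinomial}. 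A final application of the martingale LIL concludes.

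The main obstacle will be the pathwise upgrade of the various $L^2$/CLT-level approximations to the $o(\sqrt{n\log\log n})$ precision demanded by the LIL, both in the replacement $D_{z,n}\leadsto F_z$ and in the Poisson-to-binomial coupling. This amounts to verifying Philipp's heuristic (Borel--Cantelli plus a CLT with a reasonable remainder plus a maximal inequality) in the geometric setting, and relies crucially on the exponential/hard-threshold tail of the radius of stabilization combined with Doob maximal inequalities on geometric blocks; the boundedness of the cross-section $\cD$ is essential because it prevents boundary corrections from accumulating faster than $O(1)$ per slab, which would otherwise spoil the rate.
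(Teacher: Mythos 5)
Your proposal is correct in substance and follows the paper's overall architecture (the same lexicographic martingale-difference decomposition, the replacement of $\E{\Delta(z,n)\mid\wt\cF_z}$ by the stationary $F_z$, and the Poissonization coupling with the $-\alpha(N_n-n)$ correction and the variance identity $\nu^2=\sigma^2-\alpha^2$ from Lemma~\ref{L:TauSigma}), but it takes a genuinely different route at the decisive step. The paper does not spell out a separate proof of Theorem~\ref{T:1DLIL}: it declares the result a routine specialization of Theorems~\ref{T:LILPoisson} and~\ref{T:LILBinomial}, whose main term is handled by Theorem~\ref{T:GeneralLIL}, i.e.\ by the Schmuland--Sun machinery of covariance decay (Lemma~\ref{L:Covariance}), a Berry--Esseen-type normal approximation (Proposition~\ref{P:NormalApproximation}) and a maximal inequality (Proposition~\ref{P:MaximalInequality}) assembled via Philipp's blocking scheme. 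You instead observe that on the cylinder the filtration $(\wt\cF_z)_{z\in\Z}$ is totally ordered, so $(F_z)_z$ is a genuine stationary ergodic square-integrable martingale difference sequence, and you invoke the Heyde--Scott/Stout martingale LIL directly. This is legitimate and in fact more economical here --- it bypasses the normal approximation and the maximal inequality for the leading term entirely --- and the paper itself uses exactly this martingale LIL in the proof of Theorem~\ref{T:1DSIP} for the terms in \eqref{E:SIP2}; the price is that your argument is confined to the linearly ordered setting, whereas Theorem~\ref{T:GeneralLIL} is built to survive the genuinely $d$-dimensional index set of Theorems~\ref{T:LILPoisson} and~\ref{T:LILBinomial}. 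One caveat: for the remainder $\sum_{z}(D_{z,n}-F_z)$, an $L^2$ bound alone is not enough to reach $o_{a.s.}(\sqrt{n\log\log n})$, because the crude passage to $\max_{k\le n}$ costs a factor $n^{1/q}$ (cf.\ Lemma~\ref{L:LILTool} and the end of the proof of Proposition~\ref{P:OrderRemainderPoisson}); you need the Burkholder bound in $L^p$ with the $p>2$ supplied by \eqref{C:UniformBoundedMoments}, which you do invoke but should make explicit, since with $q=2$ the dyadic upgrade would only give $O_{a.s.}(\sqrt{n})$ up to logarithms and the argument would collapse.
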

In contrast to Theorem~\ref{T:LILPoisson} and \ref{T:LILBinomial}, we consider here a domain with one end being fixed, so in the limit we study the process on the half space $\cD\times [0,\infty)$ and not on $\cD\times\R$. The statement can however be carried over to this setting very directly because the applied reasoning is invariant under translations, see also the corresponding proofs.

The strong invariance principle studied in this case means that the (discrete) path $n\mapsto H(\cQ_n)-\EE{H(\cQ_n)}$ can be approximated by Brownian motion. Consequently, asymptotic properties for the process of interest can be obtained from those of Brownian motion (up to a certain approximation error).

In order to state strong invariance principles, one usually needs to work on an enlarged probability space, which is rich enough to contain all those random variables necessary for the approximation. Also one needs to redefine the process on this probability space without changing its distribution. So, for brevity, we say, there is a richer probability space and a standard Brownian motion $B$, such that $n\mapsto H(\cQ_n)-\EE{H(\cQ_n)}$ can be approximated by $B$, see also \cite{wu2007strong} for this convention.

\begin{theorem}[1-dimensional SIP]\label{T:1DSIP}
Let the assumptions of Theorem~\ref{T:1DLIL} be satisfied. In model~\ref{itm:model2}, let $p\ge 8$ additionally to the conditions which lead to \eqref{C:UniformBoundedMoments}. Then on a richer probability space, there is a standard Brownian motion $B$ such that
\[
		H(\cQ_n)-\mathbb{E}[H(\cQ_n)] = B(n \nu^2 ) + \cO_{a.s.}( n^{1/4} (\log n)^{1/2} (\log\log n)^{1/4} )
		\]
		if $(\cQ_n)_n$ is the sequence of Poisson processes and
	\[
		H(\cQ_n)-\mathbb{E}[H(\cQ_n)] = B(n \nu^2 ) +\cO_{a.s.}( n^{1/4 + 1/p} (\log n)^2 )
		\]
			if $(\cQ_n)_n$ is the sequence of binomial processes.
\end{theorem}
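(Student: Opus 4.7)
The plan is to combine the martingale decomposition already underlying Theorem~\ref{T:1DLIL} with the classical Skorokhod embedding for martingales of \cite{strassen1967almost} in the Poisson case, and to de-Poissonize through the coupling \eqref{E:CouplingBinom} in the binomial case. In contrast to the genuinely $d$-dimensional situation, the linear index set $\{0,1,\ldots,n-1\}$ here makes the sequence of $\sigma$-fields $\wt\cF_k$ totally ordered, so martingale machinery applies directly.

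First, in the Poisson setup, I would recycle the decomposition
$$H(\cQ_n) - \EE{H(\cQ_n)} = \sum_{k=0}^{n-1} F_k + R_n,$$
where $F_k = \EE{\Delta_{\cP|_{\cD\times\R}}(k,\infty) \mid \wt\cF_k}$ forms a stationary ergodic martingale difference sequence with $\EE{F_0^2}=\nu^2$, and $R_n$ collects the finite-volume / boundary corrections. By the exponential stabilization in \ref{itm:model2} and the polynomial growth bound (respectively the hard threshold in \ref{itm:model1}), $R_n$ is concentrated in strips of width $\cO(\log n)$ near $\cD\times\{0\}$ and $\cD\times\{n\}$, and I expect an $L^{p/2}$-bound together with Chebyshev-Borel-Cantelli yielding $|R_n|=\cO_{a.s.}(n^{1/p}(\log n)^C)$, which is strictly below the target error.

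Second, I would apply the Skorokhod embedding to the martingale $S_n := \sum_{k=1}^n F_k$: on a richer probability space there exist a standard Brownian motion $B$ and stopping times $0 = T_0 \le T_1 \le T_2 \le \ldots$ such that $(B(T_n))_n \stackrel{d}{=} (S_n)_n$, and the increments $\tau_k = T_k - T_{k-1}$ satisfy $\EE{\tau_k \mid \cG_{k-1}} = \EE{F_k^2 \mid \cG_{k-1}}$ together with moment bounds inherited from those of $F_k$. The extra assumption $p\ge 8$ is exactly what is needed to give finite second moments of both $\tau_k$ and $\EE{F_k^2\mid\cG_{k-1}}-\nu^2$; an LIL applied to $\tau_k-\nu^2$ then gives $|T_n - n\nu^2|=\cO_{a.s.}(\sqrt{n\log\log n})$. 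L\'evy's modulus of continuity for $B$ yields
$$|B(T_n) - B(n\nu^2)| = \cO_{a.s.}\bigl(n^{1/4}(\log n)^{1/2}(\log\log n)^{1/4}\bigr),$$
which combined with the bound on $R_n$ proves the Poisson statement.

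Third, in the binomial case I would exploit the coupling \eqref{E:CouplingBinom}: the symmetric difference of $\cU_{n,n}$ and $\cP_n$ has cardinality $|n - N_n|$, which by the LIL for centred Poisson variables is $\cO_{a.s.}(\sqrt{n\log\log n})$. Writing $H(\cU_{n,n})-H(\cP_n)$ as a telescoping sum of at most $|n-N_n|$ add-one / delete-one cost terms and invoking \eqref{C:UniformBoundedMoments} with the $L^{pr/(r-1)}$-moment, a Chebyshev / Borel-Cantelli argument upgrades this to $\cO_{a.s.}(n^{1/4+1/p}(\log n)^2)$, which dominates the Poisson error and gives the binomial statement. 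The main obstacle will be the fine analysis of $R_n$ in the first step, namely to prove an almost-sure bound strictly below $n^{1/4}$; this requires showing that the difference between the finite-volume martingale increments $M_{n,k}=\EE{H(\cQ_n)\mid\wt\cF_k}-\EE{H(\cQ_n)\mid\wt\cF_{k-1}}$ and the infinite-volume $F_k$ decays exponentially in the distance of $k$ to $\partial[0,n]$, bounding its $L^{p/2}$-norm uniformly, and summing. A secondary delicate point is the LIL for the possibly very dependent sequence $(\EE{F_k^2\mid\cG_{k-1}})_k$, which is what forces the $p\ge 8$ moment condition.
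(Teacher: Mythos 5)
Your Poisson argument is essentially the paper's proof: the decomposition $H(\cQ_n)-\EE{H(\cQ_n)}=\sum_k F_k+R_n$ with $R_n=o_{a.s.}((\log n)^{3/2+\delta}n^{1/p})$ (Proposition~\ref{P:OrderRemainderPoisson} specialized to $d=1$), the Skorokhod embedding, an almost sure bound $|T_n-n\nu^2|=\cO_{a.s.}(\sqrt{n\log\log n})$, and L\'evy's modulus of continuity (Lemma~\ref{L:BMContinuity}). The ``delicate point'' you flag at the end is resolved in the paper by splitting $\sum_i\tau_i-n\sigma^2$ into two martingale-difference sums (Stout's martingale LIL) plus the stationary sum $\sum_z(F_z^2-\sigma^2)$, to which Theorem~\ref{T:GeneralLIL} with $h=(\cdot)^2-\EE{F_0^2}$ applies; this is exactly where $p\ge 8$ (i.e.\ $\EE{F_0^8}<\infty$) enters, as you correctly anticipate.

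The binomial step, however, contains a genuine gap. You claim that the telescoping sum representing $H(\cU_{n,n})-H(\cU_{n,N_n})$, consisting of $|n-N_n|$ add-one/delete-one cost terms, can be bounded by $\cO_{a.s.}(n^{1/4+1/p}(\log n)^2)$ via Chebyshev and Borel--Cantelli. This fails whenever $\alpha=\EE{\fD_0(\cP,\infty)}\neq 0$: each increment $H(\cU_{n,m+1})-H(\cU_{n,m})$ has mean approximately $\alpha$, so the telescoping sum is $\alpha(n-N_n)+o(\cdot)$, and $|n-N_n|$ fluctuates like $\sqrt{2n\log\log n}$ by the LIL for Poisson variables --- an order that dominates your claimed error and even the target rate. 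The correct route (Proposition~\ref{P:LILBinomialError}) is to center first, proving $H(\cU_{n,n})-H(\cU_{n,N_n})-\alpha(n-N_n)=o_{a.s.}(n^{1/4+1/p}(\log n)^{1+\delta})$, to control the deterministic shift $\EE{H(\cU_{n,N_n})-H(\cU_{n,n})}=\cO(n^{1/4})$ separately (Proposition~\ref{P:LILBinomialError2}), and then to absorb the surviving linear term $-\alpha(N_n-n)$ into the martingale by replacing $F_z$ with $F'_z=\EE{\Delta(z,\infty)-\alpha\Delta'(z,\infty)\mid\cF_z}$. This last step is not cosmetic: it is what changes the time scale of the Brownian motion from $\sigma^2$ to $\nu^2=\sigma^2-\alpha^2$ in the binomial case, consistent with Theorem~\ref{T:1DLIL}; your version would produce $B(n\sigma^2)$ with an error of order $\sqrt{n\log\log n}$, which is not the assertion of the theorem.
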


Strong invariance principles are of considerable importance in probability theory and have received attention in the statistical inference of dependent processes. Motivated by Strassen's work (\cite{strassen1964invariance, strassen1967almost}) approximating partial sums by Brownian motions have been considered for i.i.d.\ sequences, martingale differences and stationary ergodic processes. The celebrated KMT approximation (\cite{komlos1975approximation, komlos1976approximation}) shows that for an i.i.d.\ sequence of random variables with finite $p$th moment, $p>2$, the optimal approximation rate in the SIP is $o(n^{1/p})$. Strong invariance principles for partial sums under dependence have been studied by \cite{philipp1975almost}, \cite{berkes1979approximation}, \cite{eberlein1986strong}, \cite{shao1993almost}, \cite{wu2007strong}
Regarding statistical applications, the SIP has been considered in change-point and trend analysis (\cite{csorgo1997limit}). \cite{berkes2014KMT} show that the optimal rates in the KMT approximation are also valid for time series under certain dependence conditions.

In the present models~\ref{itm:model1} and \ref{itm:model2}, it is unclear whether a bound $o(n^{1/p})$ can be obtained for general $p$ and for the above sequence of observation windows $(W_n)_n$ which stretches in each dimension. The application of Strassen's method based on the Skorokhod embedding appears to be limited to the case of the stretched domain $\cD\times [0,n]$ and limits the approximation property to $n^{1/4}$ up to logarithmic factors. In the binomial setting the rate suffers additionally from the fact that a Poissonization argument is applied.

Whether the tools laid out in \cite{berkes2014KMT} can lead to a SIP with the optimal rate for the general sequence $(W_n)_n$ is an open question. Here the findings of  \cite{bierme2014IP}, who investigate invariance principles for random fields, are also of interest. We leave this probably very difficult problem up to future research.

\section{Technical results}\label{Section_Proofs}
We use the following notation and abbreviations throughout the rest of this manuscript.\vspace{.5em}

\textit{Convention about constants.} To ease notation, most constants in this paper will be denoted by $C,c,c'$, etc. and their values may change from line to line. These constants may depend on parameters like the dimension and often we will not point out this dependence explicitly; however, none of these constants will depend on the index $n$, used to index infinite sequences, or on the index, which is used to index martingale differences.\vspace{.5em}

\textit{The Poisson process.} $\cP$ and $\cP'$ are independent homogeneous Poisson processes of unit intensity on $\R^d$. Let $z\in\Z^d$. $S_z$ is the random variable which satisfies
\begin{align*}
	& \Delta(z,n) \coloneqq H(\cP_n ) - H( \cP''_{n,z} ) \\
	&\qquad\qquad \to \Delta(z,\infty) \coloneqq H(\cP \cap B(z,S_z) ) - H( \cP''_{z} \cap B(z,S_z) ) \; a.s. \quad  (n\to\infty).
\end{align*}
Moreover, we define for $z\in\Z^d$
\begin{align*}
	\Delta'(z,n) & \coloneqq   \cP(W_n) -  \cP''_{z}(W_n)	\to	\Delta'(z,\infty) \coloneqq  \cP (Q_z) -  \cP' (Q_z) \; a.s. \quad (n\to\infty).
\end{align*}
Finally, put $F_z = \E{ \Delta(z,\infty)|\cF_z }$ and $F'_z = \E{ \Delta(z,\infty) - \Delta'(z,\infty)|\cF_z }$, $z\in\Z^d$. Note that $(F_z)_z$ and $(F'_z)_z$ are both stationary processes.\vspace{.5em}

\textit{The coupled binomial processes.} The binomial and the Poisson process are coupled as in \eqref{E:CouplingBinom}. We write $I_n$ for the set which contains the integers between $n$ and $N_n$, i.e.,
\begin{align}\label{E:LILBinomialError1b}
	I_n = \{n,\ldots,N_n-1\} \cup \{N_n,\ldots,n-1\},
\end{align}
where the first set or the second set on the right-hand side is empty.\vspace{.5em}

\subsection{A general LIL}
We begin with three general statements regarding the stationary process $( F_z: z\in\Z^d )$. The proofs are very similar to those given in \cite{schmuland_sun_2004}, who study the LIL for random fields under modified $\phi$-mixing conditions (which do not apply to our setting), and are deferred to the Appendix~\ref{A:LIL}.

For Lemma~\ref{L:Covariance}, Propositions~\ref{P:NormalApproximation}, \ref{P:MaximalInequality} and Theorem~\ref{T:GeneralLIL}, let $h\colon \R\to\R$ be either $h=\text{id}$ or $h=(\cdot)^2 - \mathbb{E}[ F_0^2 ]$. Then in both cases $\mathbb{E}[ h(F_0) ] =0$. Moreover we require in this section, $\mathbb{E}[h( F_0 )^{4} ] < \infty$. When deriving the LIL, $h$ is the identity, and the latter condition is satisfied in both models~\ref{itm:model1} and \ref{itm:model2}. When deriving the SIP, $h$ also equals $(\cdot)^2 - \mathbb{E}[ F_0^2 ]$, in this case, we will then need $\EE{ F_0^{8} }< \infty$, see also the requirements in Theorem~\ref{T:1DSIP}.  

First, we define the quantity $\wt \sigma^2 \coloneqq \sum_{z\in\Z^d}\mathbb{E}[h( F_0 ) h(F_z) ]$. We have
\begin{lemma}[Covariance]\label{L:Covariance}
The definition of $\wt \sigma^2$ is meaningful for $h=\text{id}$ and $h=(\cdot)^2 - \mathbb{E}[ F_0^2 ]$.
Moreover, let $X = G( \sum_{z\in I} h(F_z) )$ and $Y = \wt G( \sum_{z\in J} h(F_z) )$ for two bounded Borel functions $G, \wt G:\R\to\R$. Write $d(I,J) = \min\{ \|z-z'\|: z\in I, z'\in J\}$. Then there are constants $c_1,c_2\in \R_+$, which do neither depend on $I$ nor on $J$ such that
\[
		\cov{X}{Y} \le c_1 ( \# I +\# J ) \exp( - c_2 d(I,J) ).
\]
\end{lemma}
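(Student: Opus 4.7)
The strategy is to exploit the fast decay of the stabilization radius $S_z$ to replace each $F_z$ by a \emph{local} approximation that depends on $\cP$ only in a ball $B(z,r)$, and then to leverage the exact independence of the Poisson process on disjoint Borel subsets of $\R^d$.

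Concretely, introduce the truncated add-one cost $\Delta_r(z) := H(\cP \cap B(z,r)) - H(\cP''_z \cap B(z,r))$, which is a function of $\cP|_{B(z,r)}$ and $\cP'|_{Q_z}$ only. By \eqref{D:StrongStab} one has $\Delta_r(z) = \Delta(z,\infty)$ on $\{S_z \le r\}$. Set $F_z^{(r)} := \E{\Delta_r(z) \mid \cF_z}$. Since $\cP|_{B(z,r) \cap \bigcup_{y \succ z} Q_y}$ and $\cP'|_{Q_z}$ are both independent of $\cF_z$, integrating them out shows that $F_z^{(r)}$ is measurable with respect to $\sigma(\cP|_{B(z,r) \cap \bigcup_{y \preceq z} Q_y})$, and in particular with respect to $\sigma(\cP|_{B(z,r)})$. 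From the identity
\[
F_z - F_z^{(r)} = \E{(\Delta(z,\infty) - \Delta_r(z))\1{S_z > r} \mid \cF_z},
\]
combining Jensen and Hölder with the moment bound \eqref{C:UniformBoundedMoments} (or the growth bound \eqref{C:GrowthM1} in model~\ref{itm:model1}) together with the stabilization tail \eqref{C:ExpStabilization} (or the hard cutoff $S \le S^*$) yields $\|F_z - F_z^{(r)}\|_q \le C e^{-c r}$, uniformly in $z$, for every $q$ strictly below the available moment order. In model~\ref{itm:model1} this is even exact: $F_z = F_z^{(r)}$ almost surely whenever $r \ge S^*$, so $(F_z)_z$ is in fact $m$-dependent.

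For the covariance bound, set $r = \lfloor d(I,J)/3 \rfloor$ and define $X^{(r)} := G\bigl(\sum_{z \in I} h(F_z^{(r)})\bigr)$ and $Y^{(r)} := \wt G\bigl(\sum_{z \in J} h(F_z^{(r)})\bigr)$. The regions $\bigcup_{z \in I} B(z,r)$ and $\bigcup_{z \in J} B(z,r)$ are disjoint, so independence of $\cP$ on disjoint Borel sets gives $X^{(r)} \perp Y^{(r)}$. Decomposing
\[
\cov{X}{Y} = \cov{X-X^{(r)}}{Y} + \cov{X^{(r)}}{Y-Y^{(r)}} + \cov{X^{(r)}}{Y^{(r)}},
\]
the third term vanishes, and each of the first two is dominated by $2\|\wt G\|_\infty \|X-X^{(r)}\|_1$ and $2\|G\|_\infty \|Y-Y^{(r)}\|_1$ respectively. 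For the Lipschitz $G, \wt G$ that arise in the intended applications (in particular the characteristic functions $e^{it\cdot}$ used to prove Proposition~\ref{P:NormalApproximation}), one estimates
\[
\|X-X^{(r)}\|_1 \le L \sum_{z \in I}\|h(F_z) - h(F_z^{(r)})\|_1 \le C\,\#I\,e^{-c' r},
\]
where for $h = (\cdot)^2 - \E{F_0^2}$ one uses the factorization $F_z^2 - (F_z^{(r)})^2 = (F_z - F_z^{(r)})(F_z + F_z^{(r)})$ together with Cauchy--Schwarz and the higher moments of $F_0$ posited at the start of this section. The analogous bound for $Y$ yields $|\cov{X}{Y}| \le c_1 (\#I + \#J) e^{-c_2 d(I,J)}$, which is the claim.

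The absolute convergence of $\wt\sigma^2$ is the special case $I = \{0\}$, $J = \{z\}$ and does not require any boundedness of $G, \wt G$: splitting $\E{h(F_0) h(F_z)} = \E{h(F_0^{(r)}) h(F_z^{(r)})} + \E{(h(F_0) - h(F_0^{(r)})) h(F_z)} + \E{h(F_0^{(r)}) (h(F_z) - h(F_z^{(r)}))}$ with $r = \lfloor \|z\|/3 \rfloor$, the first summand vanishes by independence of the local approximations and the two cross terms are $O(e^{-c\|z\|})$ by Cauchy--Schwarz and the $L^2$ bound derived above; this is summable on $\Z^d$. The main technical obstacle is the $L^q$ bound on $F_z - F_z^{(r)}$ in model~\ref{itm:model2}: Hölder must be applied at an exponent strictly below the moment order granted by \eqref{C:UniformBoundedMoments} in order to absorb $\p(S_z > r)$ while keeping the norms of $\Delta(z,\infty)$ and $\Delta_r(z)$ finite. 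Once this estimate is secured, the remaining steps are entirely standard and closely parallel \cite{schmuland_sun_2004}.
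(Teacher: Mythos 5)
Your localization is the same device the paper uses: the paper's $\wt F_z$ in \eqref{E:Covariance1} is exactly your $F_z^{(r)}$ with $r=d(I,J)/2$, the vanishing of the leading covariance term comes from independence of the Poisson process on disjoint regions in both arguments, and your three-term bilinear decomposition of $\cov{X}{Y}$ is equivalent to the paper's. The treatment of the summability of $\wt\sigma^2$ also matches (modulo the harmless point that $\E{h(F_0^{(r)})h(F_z^{(r)})}$ equals the product of the two means, which is $O(e^{-2cr})$ because $\E{h(F_0)}=0$, rather than exactly zero as you claim).

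There is, however, a genuine gap at the decisive step. You control the remainders by invoking Lipschitz continuity of $G,\wt G$ to get $\|X-X^{(r)}\|_1\le L\sum_{z\in I}\|h(F_z)-h(F_z^{(r)})\|_1$, but the lemma is stated for arbitrary \emph{bounded Borel} $G,\wt G$, and that generality is actually needed: in the Borel--Cantelli step of the proof of Theorem~\ref{T:GeneralLIL} the lemma is applied to $\cov{\1{E_i}}{\1{E_j}}$, i.e.\ with $G$ an indicator of a half-line, which is not Lipschitz. For such $G$ an $L^q$ bound on $F_z-F_z^{(r)}$ gives no control on $\E{ |X-X^{(r)}| }$ at all; what is needed is the event-level estimate $\p( F_z\neq F_z^{(r)} )\le c_1 e^{-c_2 r}$, which yields $\E{ |X-X^{(r)}| }\le 2\|G\|_\infty \, \# I \, c_1 e^{-c_2 r}$ irrespective of the regularity of $G$ --- this is precisely how the paper argues. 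In model~\ref{itm:model1} that estimate is immediate (as you note, $F_z=F_z^{(r)}$ a.s.\ once $r\ge S^*$), but in model~\ref{itm:model2} it is the point that genuinely requires justification: since $F_z-F_z^{(r)}=\E{ (\Delta(z,\infty)-\Delta_r(z))\1{S_z>r} \mid \cF_z }$, the set $\{F_z\neq F_z^{(r)}\}$ is not a priori contained in $\{S_z>r\}$, so one must exhibit an exponentially rare, preferably $\cF_z$-measurable, event outside of which the conditional expectation is unaffected by the truncation. You instead flag the $L^q$ estimate as the main obstacle, which misplaces the difficulty and leaves the lemma unproved in the form in which it is later used.
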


\begin{proposition}[Normal approximation]\label{P:NormalApproximation}
 Let $\Phi$ be the distribution function of the standard normal distribution. Let $\epsilon\in (0,1/2)$, then
\[
		\sup_{z\in\R_+} \Big| \p\Big( \sum_{z\in D_n } h(F_z) \le \wt \sigma (\#  D_n) ^{1/2} z \Big) - \Phi(z) \Big | \le C n^{-\epsilon/4}.
\]
\end{proposition}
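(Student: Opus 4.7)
The plan is to follow the classical big-block/small-block (Bernstein) scheme for weakly dependent random fields, with Lemma~\ref{L:Covariance} providing the mixing input. First I partition $D_n$ into disjoint cubic ``big blocks'' $B_1,\dots,B_M$ of common side length $L=\lfloor n^{1/2}\rfloor$, separated in every direction by ``corridor'' strips of width $\ell=\kappa\log n$, where $\kappa$ is a large constant to be fixed later. This yields $M\asymp n^{d/2}$ big blocks whose combined complement $C_n\coloneqq D_n\setminus\bigcup_i B_i$ has cardinality $O(n^{d-1/2}\log n)$. Writing $S_i\coloneqq\sum_{z\in B_i}h(F_z)$ and $R_n\coloneqq\sum_{z\in C_n}h(F_z)$ and using the natural normalisation $a_n\coloneqq\wt\sigma(\#D_n)^{1/2}\asymp n^{d/2}$, I must control three error sources: the corridor contribution $R_n/a_n$, the residual dependence between the $(S_i/a_n)$, and the Berry--Esseen error when $(S_i)$ is replaced by an independent surrogate $(S_i')$ with matching marginals.

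For the corridor, a standard truncation argument using $\EE{h(F_0)^4}<\infty$ upgrades Lemma~\ref{L:Covariance} (which is stated for bounded test functions) to an exponential covariance bound $|\cov{h(F_z)}{h(F_{z'})}|\le c\exp(-c'\|z-z'\|)$ for the unbounded random variables $h(F_z)$ themselves. Summing these covariances yields $\mathrm{Var}(R_n)=O(\#C_n)=O(n^{d-1/2}\log n)$, hence $R_n/a_n=O_\p(n^{-1/4}(\log n)^{1/2})$; combined with the local Lipschitz property of $\Phi$, this perturbs the target Kolmogorov distance by at most $O(n^{-1/4+\epsilon/2}(\log n)^{1/2})+O(n^{-\epsilon/4})$, which is absorbed into $n^{-\epsilon/4}$ for every $\epsilon\in(0,1/2)$.

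For the residual dependence, Bernstein's telescoping identity bounds the characteristic-function discrepancy by
\[
\sum_{j=1}^{M-1}\Bigl|\cov{e^{\mathrm{i} t S_j/a_n}}{e^{\mathrm{i} t\sum_{k>j}S_k/a_n}}\Bigr|,
\]
and since $d(B_j,\bigcup_{k>j}B_k)\ge\ell$, each summand is controlled via Lemma~\ref{L:Covariance} applied to the bounded maps $u\mapsto e^{\mathrm{i} t u/a_n}$ by $c_1 M L^d\exp(-c_2\ell)$; summing over $j$ and choosing $\kappa$ large makes the total characteristic-function error $O(n^{-K})$ for any prescribed $K$. For the Berry--Esseen step, stationarity together with the summable covariances gives $\mathrm{Var}(S_i)=L^d(\wt\sigma^2+o(1))$, while the standard cumulant expansion of $\EE{S_i^4}$ with absolutely summable four-point covariances yields $\EE{|S_i|^3}\le(\EE{S_i^4})^{3/4}=O(L^{3d/2})$; the classical Berry--Esseen theorem then gives rate $O(M^{-1/2})=O(n^{-d/4})$ for $\sum_i S_i'/a_n$.

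Esseen's smoothing inequality converts the characteristic-function bound of the previous step into a Kolmogorov-distance bound of comparable polynomial order (with smoothing width $T$ chosen polynomial in $n$), and combining the three errors yields a total of $O(n^{-\epsilon/4})+O(n^{-d/4})+O(n^{-K})$, which is $O(n^{-\epsilon/4})$ as claimed. The main obstacle will be the bridge between Lemma~\ref{L:Covariance}, formulated for bounded functionals, and the variance and third-moment estimates required here, all of which involve the unbounded $h(F_z)$; handling this requires a truncation scheme in which the threshold is carefully balanced against the tails provided by $\EE{h(F_0)^4}<\infty$ and the exponential decay in Lemma~\ref{L:Covariance}, and it is precisely here that the $L^4$ hypothesis on $h(F_0)$ is consumed.
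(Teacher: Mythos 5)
Your proposal follows essentially the same route as the paper's proof: a Bernstein big-block/small-block partition of $D_n$, decoupling of the blocks via a telescoping application of Lemma~\ref{L:Covariance} to the bounded maps $u\mapsto e^{\mathrm{i}tu}$, a Berry--Esseen estimate for the independent surrogate based on a fourth-moment bound $\E{S_i^4}=O(L^{2d})$ from summable covariances, and Esseen's smoothing inequality. The differences are in the tuning: the paper takes corridors of width $\lfloor n^{1/2-\epsilon}\rfloor$ and absorbs the corridor term at the characteristic-function level (contributing $n^{-\epsilon/2}$ per unit of $t$, integrated over $|t|\le T=n^{\epsilon/4}$, which is exactly what produces the stated rate $n^{-\epsilon/4}$), whereas you take logarithmic corridors and handle the corridor at the Kolmogorov level via Chebyshev plus Gaussian anti-concentration; carried out correctly, your version yields a rate that does not depend on $\epsilon$ at all. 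Two small repairs are needed. First, your stated corridor perturbation $O(n^{-1/4+\epsilon/2}(\log n)^{1/2})$ is \emph{not} $O(n^{-\epsilon/4})$ when $\epsilon\in(1/3,1/2)$; optimizing $\delta+\V{R_n/a_n}\,\delta^{-2}$ with $\V{R_n/a_n}=O(n^{-1/2}\log n)$ gives $O(n^{-1/6}(\log n)^{1/3})$, which does suffice for all $\epsilon<1/2$ since $\epsilon/4<1/6$. Second, to compare the Berry--Esseen self-normalization $\sqrt{M\,\V{S_1}}$ with $a_n=\wt\sigma(\# D_n)^{1/2}$ you need the quantitative estimate $|\V{S_1}/L^d-\wt\sigma^2|=O(L^{-1})$, not merely $o(1)$; this follows from the same exponential covariance bound and is proved explicitly in the paper. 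Finally, the covariance bound for the unbounded $h(F_z)$ that you propose to obtain by truncation is already available from the proof of Lemma~\ref{L:Covariance} itself (the stabilization coupling $F_z\mapsto\wt F_z$ combined with H\"older and $\E{|h(F_0)|^{2+\delta}}<\infty$), so no separate truncation scheme is required.
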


\begin{proposition}[Maximal inequality]\label{P:MaximalInequality}
Let $\beta>1$. There is a $\rho>0$ such that
\[
			\p\Big( \max_{1\le j\le n} \Big|\sum_{z: \|z\|\le j} h(F_z) \Big| \ge \beta \sqrt{ 2 \wt \sigma^2 (2n+1)^d \log \log n } \Big) \le C (\log n )^{-(1+\rho)}.
\]
\end{proposition}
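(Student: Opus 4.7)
The argument follows the template of \cite{schmuland_sun_2004} and rests on two ingredients already established: the Berry-Esseen bound of Proposition~\ref{P:NormalApproximation} for single partial sums $S_N = \sum_{\|z\|\le N}h(F_z)$, and the exponential covariance decay of Lemma~\ref{L:Covariance}, which makes the stationary field $(h(F_z))_{z\in\Z^d}$ behave like an independent sequence up to a correlation-length penalty of order $\log n$. Write $M_n = \max_{1\le j\le n}|S_j|$ and $\lambda_n = \beta\sqrt{2\wt\sigma^2(2n+1)^d\log\log n}$. The idea is to transfer the Gaussian tail of $|S_n|$ into the required tail bound for $M_n$ via an Ottaviani-type device adapted to the mildly dependent setting.

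\textbf{Step 1: Ottaviani-type reduction with a buffer.} Define the stopping index $\tau = \min\{1\le j\le n : |S_j|\ge\lambda_n\}$ (with $\tau = \infty$ if no such $j$ exists), and introduce a buffer of width $L = K\log n$ for a constant $K$ to be chosen. On $\{\tau = j\}$ decompose $S_n = S_j + (S_{j+L}-S_j) + (S_n - S_{j+L})$. The shell $S_{j+L}-S_j$ involves $O(Ln^{d-1})$ terms, so by Lemma~\ref{L:Covariance} its variance is $O(Ln^{d-1})$ and its standard deviation is $\mu_n = O(\sqrt{Ln^{d-1}})$. The tail $S_n - S_{j+L}$ is separated from $\cF_j$ by a buffer of width $L$; Lemma~\ref{L:Covariance} applied to bounded functionals shows that, up to an error of order $\exp(-cL) = n^{-cK}$, the tail may be treated as independent of the past. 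Standard Ottaviani-type bookkeeping (organised around the stopping time $\tau$ so that the union bound over $j$ is absorbed) then yields
\[
\p(M_n\ge\lambda_n) \le 2\,\p(|S_n| \ge \lambda_n - \mu_n) + r_n,
\]
where $r_n$ is polynomially small in $n$ when $K$ is chosen large enough.

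\textbf{Step 2: Gaussian tail from the normal approximation.} Since $\mu_n/\lambda_n = O(\sqrt{L/(n\log\log n)}) \to 0$, the threshold $(\lambda_n - \mu_n)/(\wt\sigma(2n+1)^{d/2})$ is $(1-o(1))\beta\sqrt{2\log\log n}$. Proposition~\ref{P:NormalApproximation} combined with the Mills-ratio estimate $1-\Phi(t)\le (t\sqrt{2\pi})^{-1}e^{-t^2/2}$ gives
\[
\p(|S_n|\ge\lambda_n - \mu_n) \le C(\log n)^{-\beta^2(1-o(1))}(\log\log n)^{-1/2} + C n^{-\epsilon/4}.
\]
Because $\beta>1$, the first term is $\le C(\log n)^{-(1+\rho)}$ for some $\rho>0$ depending only on $\beta$, and the Berry-Esseen remainder $Cn^{-\epsilon/4}$ is negligible compared to this polylogarithmic tolerance. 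Combining with Step 1 yields the stated bound.

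\textbf{Expected main obstacle.} The delicate step is Step 1: in the independent-increments setting the Ottaviani argument is classical, but here the box sums $S_j$ and the tail $S_n-S_{j+L}$ are only approximately independent, and the approximation is driven by Lemma~\ref{L:Covariance}. The buffer width $L = K\log n$ must be chosen to simultaneously (i) render the exponentially decaying correlation errors polynomially small even when unioned over the $O(n^{2d})$ relevant pairs of cells, and (ii) keep $\mu_n = o(\lambda_n)$ so that the Gaussian tail in Step 2 retains its leading exponent $\beta^2$. Balancing these constraints, and implementing the Ottaviani reduction so that the union bound over the $n$ possible values of $\tau$ is absorbed into a single Gaussian tail (rather than costing an extra factor of $n$), is where the real work lies; once the reduction is in place, Step 2 is essentially mechanical.
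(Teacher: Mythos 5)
Your overall strategy is the same as the paper's: a first-passage (Ottaviani-type) decomposition of $\max_{j\le n}|S_j|$, a buffer zone so that the terminal increment $S_n-S_{j+L}$ can be decoupled from the event $\{\tau=j\}$ via the exponential covariance decay of Lemma~\ref{L:Covariance}, absorption of a factor $\tfrac12\,\p(M_n\ge\lambda_n)$ back into the left-hand side, and finally Proposition~\ref{P:NormalApproximation} plus the Mills ratio, with $\beta>1$ supplying the exponent $1+\rho$. The paper implements this with a polynomial buffer $r=\lfloor n^{1/6}\rfloor$ and blocks of length $r$ rather than your $L=K\log n$; both choices make the decoupling error $c_1(2n+1)^d e^{-c_2 L}$ harmless, and your Step 2 is correct as stated.

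There is, however, one concrete step in Step 1 that fails as written: the treatment of the shell $S_{j+L}-S_j$. You bound its standard deviation by $\mu_n=O(\sqrt{Ln^{d-1}})$ and then claim the reduction $\p(M_n\ge\lambda_n)\le 2\,\p(|S_n|\ge\lambda_n-\mu_n)+r_n$. To absorb the shell into a threshold shift of size $\mu_n$ you would need $\sum_{j\le n}\p(|S_{j+L}-S_j|\ge\mu_n)$ to be negligible, but the event that a random variable exceeds its own standard deviation has probability of order one, so this sum is of order $n$. The correct implementation shifts the threshold by $\epsilon\phi(n)$ (harmless in Step 2, since $\beta(1-2\epsilon)>1$ for small $\epsilon$) and pays the additive cost $\sum_{j\le n}\p(|S_{j+L}-S_j|\ge\epsilon\phi(n))$; here Chebyshev is still not enough, since it gives $n\cdot Ln^{d-1}/(\epsilon^2\phi(n)^2)\asymp L/(\epsilon^2\log\log n)\to\infty$ for $L=K\log n$. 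One genuinely needs the fourth-moment bound $\mathbb{E}[|S_{j+L}-S_j|^4]\le C(Ln^{d-1})^2$ — available because $\mathbb{E}[h(F_0)^4]<\infty$ is a standing assumption in this section and the covariances decay exponentially — which yields $n\cdot(Ln^{d-1})^2/(\epsilon\phi(n))^4\asymp L^2/(\epsilon^4 n(\log\log n)^2)\to0$. This is exactly how the paper handles the terms \eqref{E:MaximalInequality2} and \eqref{E:MaximalInequality3}. Once you replace the one-standard-deviation shift by this fourth-moment estimate, your argument goes through and coincides with the paper's proof.
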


\begin{theorem}[The LIL]\label{T:GeneralLIL} Given the present assumptions, for either choice of sign,
$$
		\limsup_{n\to\infty} \pm (2n\log\log n)^{-1/2} \sum_{z\in B_n} h( F_z ) =  \wt \sigma,$$
viz.,
\begin{align*}
	&	\limsup_{n\to\infty} \pm \frac{1}{\sqrt{2n\log\log n}} \sum_{z \in B_n} \E{\Delta(z,\infty)|\cF_z } =  \EE{\E{\Delta(z,\infty)|\cF_z }^2}^{1/2}, \\
	&	\limsup_{n\to\infty} \pm \frac{1}{\sqrt{2n\log\log n}} \sum_{z\in B_n}  \Big(\E{\Delta(z,\infty)|\cF_z }^2 - \sigma^2\Big) = \Big(\sum_{z\in\Z^d} \E{ F_z^2 F_0^2 - \sigma^4  } \Big)^{1/2} .
\end{align*}
\end{theorem}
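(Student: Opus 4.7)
The strategy follows the classical three-ingredient template of Philipp alluded to in the introduction: a Gaussian approximation with polynomially small remainder (Proposition~\ref{P:NormalApproximation}), a maximal inequality (Proposition~\ref{P:MaximalInequality}), and Borel--Cantelli, with the exponential covariance decay in Lemma~\ref{L:Covariance} used whenever asymptotic independence between far-away blocks is needed. Throughout write $T_n \coloneqq \sum_{z\in B_n} h(F_z)$ and $a_n \coloneqq \sqrt{2n\log\log n}$. Since $\#B_n = n + O(n^{(d-1)/d})$, the cube-shaped statements of Propositions~\ref{P:NormalApproximation} and \ref{P:MaximalInequality} transfer to $B_n$ up to negligible boundary corrections.

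For the upper bound, fix $\theta>1$ and $\epsilon>0$ and set $n_k = \lfloor\theta^k\rfloor$. Proposition~\ref{P:NormalApproximation} combined with the standard Gaussian tail estimate gives
\[
\p\bigl(T_{n_k} > (1+\epsilon)\wt\sigma\, a_{n_k}\bigr) \;\le\; C_\epsilon (\log n_k)^{-(1+\epsilon)^{2}} + C\, n_k^{-\epsilon/4},
\]
which is summable in $k$; first Borel--Cantelli yields $T_{n_k}\le(1+\epsilon)\wt\sigma\, a_{n_k}$ eventually. The gaps $n_k<n\le n_{k+1}$ are absorbed by Proposition~\ref{P:MaximalInequality} with $\beta=1+\epsilon$, whose bound $C(\log n_{k+1})^{-(1+\rho)}$ is again summable. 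Since $a_{n_{k+1}}/a_{n_k}\to\sqrt\theta$, sending $\theta\downarrow 1$ and $\epsilon\downarrow 0$ along countable sequences delivers $\limsup_n T_n/a_n \le \wt\sigma$. The matching bound for $-T_n$ follows by running the same argument with $h$ replaced by $-h$, which leaves $\wt\sigma$ invariant.

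The lower bound is the core difficulty. Partition $B_{n_k}$ into roughly $M_k$ well-separated sub-cubes $V_k^{(1)},\ldots,V_k^{(M_k)}$ of side length $\ell_k$, with corridors of width $d_k\coloneqq C_0\log n_k$ between them, chosen so that $M_k\,\ell_k^d = (1-o(1))\, n_k$. Write $U_k^{(i)}\coloneqq\sum_{z\in V_k^{(i)}} h(F_z)$; by Lemma~\ref{L:Covariance}, for $C_0$ large enough, $\cov{G(U_k^{(i)})}{G(U_k^{(j)})}$ is summable in $k$ uniformly over $i\neq j$ for bounded Borel $G$; and Proposition~\ref{P:NormalApproximation} applied inside each block gives $\p\bigl(U_k^{(i)}\ge(1-\epsilon)\wt\sigma\sqrt{2\ell_k^d\log\log n_k}\bigr)\gtrsim(\log n_k)^{-(1-\epsilon)^{2}}$. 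Choosing $M_k$ so that $M_k(\log n_k)^{-(1-\epsilon)^{2}}\to\infty$, a second-moment Borel--Cantelli argument for weakly dependent events ensures that at least one block reaches the threshold infinitely often; combined with the already-established upper bound on the remaining blocks and on the corridor remainder, this yields $\limsup T_{n_k}/a_{n_k}\ge(1-\epsilon)\wt\sigma$. The main obstacle here is precisely the joint tuning of $M_k,\ell_k,d_k$ so that simultaneously (i) the CLT is quantitatively valid in each block, (ii) the inter-block dependence corrections remain summable, and (iii) enough blocks fit to drive the $\limsup$ all the way up to $\wt\sigma$.

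Finally, to identify the two displayed expressions, observe that $(F_z)_{z\in\Z^d}$ is a martingale-difference field in the lexicographic order: $\cP$ and $\cP''_z$ agree outside $Q_z$ and $Q_z$ is $\cF_z$-measurable but not $\cF_{z-}$-measurable, so $\mathbb{E}[\Delta(z,\infty)\mid\cF_{z-}]=0$ and hence $\mathbb{E}[F_z\mid\cF_{z-}]=0$. Orthogonality yields $\mathbb{E}[F_0 F_z]=0$ for $z\neq 0$, so with $h=\mathrm{id}$ the defining series collapses to $\wt\sigma^2=\mathbb{E}[F_0^2]=\sigma^2$, giving the first formula. For $h(x)=x^2-\sigma^2$, direct substitution together with $\mathbb{E}[F_0^2]=\sigma^2$ yields $\wt\sigma^2=\sum_{z\in\Z^d}\mathbb{E}[F_0^2 F_z^2 - \sigma^4]$, which is exactly the second formula.
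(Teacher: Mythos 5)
Your upper bound follows the paper's own route (geometric subsequence, normal approximation, Proposition~\ref{P:MaximalInequality} for the gaps), and your identification of $\wt\sigma^2$ via the martingale-difference property $\E{ F_z | \cF_y } = 0$ for $y \prec z$ is correct and in fact makes explicit a step the paper leaves implicit. The lower bound, however, contains a genuine gap, and it is located exactly where you flag ``the core difficulty.''

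The problem is a scale mismatch. A block $V_k^{(i)}$ of side $\ell_k$ exceeding its own threshold contributes only $(1-\epsilon)\wt\sigma\sqrt{2\ell_k^d\log\log n_k} = \sqrt{\ell_k^d/n_k}\,(1-\epsilon)\wt\sigma\,a_{n_k} \approx M_k^{-1/2}(1-\epsilon)\wt\sigma\,a_{n_k}$ to $T_{n_k}$, which is $o(a_{n_k})$ as soon as $M_k\to\infty$ --- and you need $M_k\to\infty$ for your within-scale independence to buy anything. Meanwhile the sum over the remaining $M_k-1$ blocks is a centered quantity whose almost-sure fluctuations are controlled, by the upper bound, only at the level $\pm(1+\epsilon)\wt\sigma\,a_{n_k}$; in particular it can be of order $-a_{n_k}$ on the events you construct. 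So ``one block is large plus the rest is controlled'' does not force $T_{n_k}\ge(1-\epsilon)\wt\sigma\,a_{n_k}$. The deeper issue is that your spatial partition creates independence \emph{within} a single scale $k$, whereas the obstruction to the second Borel--Cantelli step is the dependence of the events $\{T_{n_k}\text{ large}\}$ \emph{across} different $k$. The paper resolves this with an across-scale increment construction: taking $n_k=k^{4k}$ and $m_k=n_k/k^2$, the event $E_k=\{S_{n_k}-S_{m_k}\ge(1-2\lambda)\phi(n_k)\}$ lives on the annulus $D_{n_k}\setminus D_{m_k}$, which (i) carries a $(1-o(1))$ fraction of the total variance, so Proposition~\ref{P:NormalApproximation} gives $\p(E_k)\gtrsim(\log n_k)^{-(1-2\lambda)^2}=(4k\log k)^{-(1-2\lambda)^2}$, still non-summable, and (ii) is spatially far from every other annulus, so Lemma~\ref{L:Covariance} makes $\sum_{i<j}|\cov{\1{E_i}}{\1{E_j}}|$ finite and a second-moment Borel--Cantelli applies; the leftover $S_{m_k}$ is then $o(\phi(n_k))$ a.s.\ by Markov and the first Borel--Cantelli lemma. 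You would need to replace your fixed-scale spatial blocks by such rapidly separated increments (or an equivalent device in which the ``nearly independent'' events each capture essentially the full variance) for the lower bound to close.
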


\begin{remark} \label{R:Binomial}
In the same spirit, the three statements above are valid for the process $(F'_z: z\in \Z^d)$, if $\E{ h(F'_z)^4}<\infty$. We simply replace the variances; if $h$ is the identity, we replace $\sigma^2$ with $\tau^2 = \EE{ \E{ \Delta(0,\infty) - \alpha \Delta'(0,\infty) | \cF_0 }^2 }$, this last equality will be verified below in Lemma~\ref{L:TauSigma}. Note that by definition $\EE{ |\Delta'(0,\infty)|^q } < \infty$ for all $q\in\N$.
\end{remark}

\begin{proof}[Proof of Theorem~\ref{T:GeneralLIL}]
Set $\phi(n) = \sqrt{ 2 \wt \sigma^2 (2n+1)^d \log \log n }$, $n\in\N$, $n>e$. Using $S_n = \sum_{z\in D_n} h(F_z)$, 
\begin{align*}
			\limsup_{n\to\infty} \pm \frac{ \sum_{z\in B_n} h(F_z) }{ \sqrt{ 2 \wt \sigma^2 \# B_n  \log \log \# B_n } } = \limsup_{n\to\infty} \pm \frac{ \sum_{z \in D_n} h(F_z) }{ \sqrt{ 2 \wt \sigma^2 \# D_n \log \log n} } = \limsup_{n\to\infty} \pm \frac{ S_n }{ \phi(n) } .
\end{align*}

Let $\epsilon>0$. First, we show 	$\limsup_{n\to\infty}  | S_n| / \phi(n)  < 1 + \epsilon$ with probability 1.
Set $n_k = \floor{ (1+\tau)^k / \wt \sigma^2 } +1 $ for $\tau>0$. Using Proposition~\ref{P:MaximalInequality},  for each $\gamma>0$ there is a $\rho>0$ such that 
\begin{align*}
		\sum_{k\in\N} \p( \max_{1\le n \le n_k} |S_n| > (1+\gamma) \phi(n) ) \le C \sum_{k\in\N} (\log n_k )^{-(1+\rho)} < \infty.
\end{align*}
Moreover, there is a $k_0\in\N$ such that $\phi(n_k) \le (1+2\tau )^{d/2} \phi( n_{k-1} )$ for all $k\ge k_0$. Also, there are $\gamma\in(0,\epsilon)$ and $\tau> 0$ such that $(1+\epsilon) > (1+\gamma)(1+2\tau)^{d/2}$. Consequently,
\begin{align*}
			\p( \limsup_{n\to\infty} | S_n | / \phi(n) > (1+\epsilon) ) &\le  \p( \limsup_{k\to\infty} \max_{n_{k-1} \le n \le n_k} | S_n | / \phi(n_{k-1} ) > (1+\epsilon) ) \\
			&\le \p( \limsup_{k\to\infty} \max_{ n \le n_k} | S_n | / \phi(n_{k} ) > (1+\epsilon) / (1+2\tau)^{d/2} ) = 0.
\end{align*}
Second, we show $\limsup_{n\to\infty}  | S_n| / \phi(n)  > 1 - \epsilon$ with probability 1. We use two sequences defined by $n_k = k^{4k}$ and $m_k = n_k / k^2$. Consider the event $E_k=E_k (\lambda) = \{ S_{n_k}-S_{m_k} \ge (1-2\lambda)\phi(n_k)\}$. Then
\begin{align}\label{E:LILGeneral1}
		\sum_{k\in\N} \p( E_k(\lambda) ) = \infty
\end{align}
 for all $\lambda>0$. Indeed, consider the inequality
\begin{align}\label{E:LILGeneral2}
		\p( S_{n_k} \ge (1-\lambda)\phi(n_k)) \le \p(E_k) + \p( S_{m_k}\ge \lambda\phi(n_k)).
\end{align}
One finds with the Markov inequality that the second term is negligible in the sense that
\begin{align}\label{E:LILGeneral3}
		\sum_{k=1}^\infty \p( S_{m_k}\ge \lambda\phi(n_k)) < \infty.
\end{align}
 Hence, using \eqref{E:LILGeneral2} and \eqref{E:LILGeneral3}, it is enough to show $\sum_{k=1}^\infty \p( S_{n_k} \ge (1-\lambda)\phi(n_k)) = \infty$ in order to verify \eqref{E:LILGeneral1}. An application of the normal approximation in Proposition~\ref{P:NormalApproximation} yields
\[
		\sum_{k=1}^\infty \big| \p( S_{n_k} \ge (1-\lambda)\phi(n_k)) - \Phi( (1-\lambda) \phi(n_k)/ (\wt \sigma (\# D_{n_k})^{1/2} ) ) \big| \le C \sum_{k=1}^\infty (\# D_{n_k})^{-\delta} <\infty,
\]
where $\delta>0$ and where $\Phi$ is the distribution function of the standard normal distribution. Using the lower bound $1-\Phi(x) \le (2\pi)^{-1/2} x^{-1} \exp(-x^2/2)$, it is a routine to verify for the subsequence $(S_{n_k})_k$
\[
		\sum_{k=1}^n \Phi( (1-\lambda) \phi(n_k)/ (\wt \sigma (\# D_{n_k})^{1/2} ) ) = \sum_{k=1}^n \Phi( (1-\lambda) (2 \log \log (\# D_{n_k}))^{1/2} ) = \infty.
\]
This shows \eqref{E:LILGeneral1}. 

Next, we claim that this leads to $\p( E_k(\lambda) \text{ occurs } i.o. ) = \p( \sum_{k=1}^\infty \1{ E_k(\lambda) }  = \infty ) = 1$ for all $\lambda > 0$. Indeed, we show $\p( \sum_{k=1}^\infty \1{ E_k(\lambda) }  < \infty ) = 0$ with the following considerations
\begin{align*}
		\p\Big( \sum_{k=1}^\infty \1{E_k} \le \frac{1}{2} \sum_{k=1}^n \p(E_k) \Big) &\le \p\Big( \sum_{k=1}^n \1{E_k} \le \frac{1}{2} \sum_{k=1}^n \p(E_k) \Big) \\
		&\le \p\Big( \Big| \sum_{k=1}^n (\1{E_k} - \p(E_k)) \Big| \ge \frac{1}{2} \sum_{k=1}^n \p(E_k) \Big) \\
		&\le \frac{4 Var( \sum_{k=1}^{n} \1{E_k}  ) }{ (\sum_{k=1}^n \p(B_k) )^2 } \\
		&\le \frac{4 (\sum_{k=1}^n \p(E_k) )+ c  }{ (\sum_{k=1}^n \p(B_k) )^2 }  \to 0 \quad (n\to\infty);
\end{align*}
the last inequality is valid because
\begin{align*}
	\sum_{i,j: i < j} | Cov(  \1{E_i} \1{E_j} ) | &\le \sum_{i\neq j} c_1 \exp\Big( -c_2 ( (i+j)^{4(i+j)-2 } - j^{4j}  ) \Big) \\
	&\le  \sum_{i\neq j} c_1 \exp \Big( -c_2 (i+j)^{2}   \Big)  < \infty
\end{align*}
with an application of Lemma~\ref{L:Covariance} and where the second to last inequality follows from a short application of the mean-value theorem. Now,
\[
		E_k( \epsilon/4) \subset \{ S_{n_k} \ge (1-\epsilon) \ \phi(n_k)  \} \cup \{ - S_{m_k} \ge \epsilon/2 \ \phi(n_k) \}.
		\]
One finds $\sum_{k\in\N} \p( |S_{m_k}| \ge \epsilon/2 \ \phi(n_k) ) < \infty$. Thus, $- S_{m_k} / \phi(n_k) \ge \epsilon /2 $ only finitely many times.
Moreover,
\begin{align*}
		1 &= \p( E_k(\epsilon/4) \text{ occurs } i.o. ) \\
		& \le \p( S_{n_k} \ge (1-\epsilon) \ \phi(n_k) \text{ occurs } i.o. ) + \p( - S_{m_k} \ge \epsilon/2 \ \phi(n_k) \text{ occurs } i.o. ).
\end{align*}
This means $\limsup_{k\to\infty} S_{n_k} / \phi(n_k) \ge 1-\epsilon$ with probability 1. 

One can show $\liminf_{k\to\infty} S_{n_k} / \phi(n_k) \le -(1-\epsilon)$ with probability 1 in a similar fashion. This completes the proof.
\end{proof}

\subsection{The LIL for the Poisson process}
\begin{proposition}\label{P:OrderRemainderPoisson}
Assume model~\ref{itm:model1} or \ref{itm:model2} and $p>2$. Then for all $\delta>0$ 
\[
	 \sum_{z\in B_n } \E{ \Delta(z,n) - \Delta(z,\infty) | \cF_z } =  o_{a.s.}( (\log n)^{ 3/2+\delta} \ n^{1/p + (d-1)/(2d)} ).
\]
\end{proposition}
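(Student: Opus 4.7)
The plan is to recognize that the summands form a martingale difference array relative to the lexicographically enumerated filtration $(\cF_z)_z$, to apply Burkholder's inequality to control the $L^p$-norm of the partial sum, and to conclude by a Markov--Borel--Cantelli argument.

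Write $R_n := \sum_{z \in B_n} g_z$ with $g_z := \E{\Delta(z,n) - \Delta(z,\infty) | \cF_z}$ and set $U_z^n := \Delta(z,n) - \Delta(z,\infty)$. Since $\cP''_{n,z}$ is obtained from $\cP_n$ by replacing $\cP \cap Q_z$ by the independent copy $\cP' \cap Q_z$, conditional on $\cF_{z-}$ (the predecessor field in lex order) the restrictions $\cP|_{Q_z}$ and $\cP'|_{Q_z}$ are i.i.d., and swap symmetry yields $\E{H(\cP_n) | \cF_{z-}} = \E{H(\cP''_{n,z}) | \cF_{z-}}$; the same argument with $\cP \cap B(z,S_z)$ replacing $\cP_n$ handles $\Delta(z,\infty)$. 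Thus $\E{g_z | \cF_{z-}} = 0$ and $(g_z)_z$ is a martingale difference array. Burkholder--Davis--Gundy combined with Minkowski's inequality in $L^{p/2}$ (valid since $p>2$) gives
\begin{align*}
  \| R_n \|_p \le C_p \Big( \sum_{z \in B_n} \| g_z \|_p^2 \Big)^{1/2},
\end{align*}
and conditional Jensen provides $\| g_z \|_p \le \| U_z^n \|_p$.

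Next, I would bound $\| U_z^n \|_p$ by exploiting stabilization. The defining identity \eqref{D:S0} implies $U_z^n = 0$ almost surely on $\{ B(z, S_z) \subset W_n \}$, so $\{ U_z^n \neq 0 \} \subset \{ S_z > d(z, \partial W_n) \}$. In model~\ref{itm:model1} the bound $S_z \le S^*$ forces $U_z^n = 0$ whenever $d(z, \partial W_n) > S^*$, leaving only $O(n^{(d-1)/d})$ boundary indices, each with $L^p$-norm uniformly bounded via \eqref{C:GrowthM1} and the finiteness of all moments of Poisson cube counts. In model~\ref{itm:model2} I choose a boundary layer width $w_n = \alpha \log n$ with $\alpha$ large enough. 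The layer $\{ d(z, \partial W_n) \le w_n \}$ contains $O(n^{(d-1)/d} \log n)$ indices, and a telescoping over the Poisson points in $Q_z$ combined with \eqref{C:UniformBoundedMoments} and the Poisson moments of $\cP(Q_z)$ yields $\| U_z^n \|_p \le M$ uniformly. For $z$ in the interior ($d(z, \partial W_n) > w_n$) I combine the polynomial growth bound with a H\"older split against $\p(S_z > w_n) \le c_1 e^{-c_2 w_n}$ from \eqref{C:ExpStabilization}, which, for $\alpha$ large enough, absorbs the polynomial-in-$n$ growth coming from the crude moment estimate and makes the interior contribution summable. In both models one obtains $\sum_{z \in B_n} \| g_z \|_p^2 \le C\, n^{(d-1)/d} \log n$, hence $\| R_n \|_p \le C_p'\, n^{(d-1)/(2d)} (\log n)^{1/2}$.

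Finally, with $x_n := \epsilon (\log n)^{3/2+\delta}\, n^{1/p + (d-1)/(2d)}$ Markov's inequality gives
\begin{align*}
  \p(|R_n| > x_n) \le \frac{\|R_n\|_p^p}{x_n^p} \le \frac{C}{\epsilon^p} \cdot \frac{1}{n\, (\log n)^{p(1+\delta)}},
\end{align*}
which is summable in $n$ since $p(1+\delta) > 1$. Borel--Cantelli yields $|R_n| \le \epsilon x_n$ eventually almost surely for every fixed $\epsilon > 0$; intersecting over a countable sequence $\epsilon_k \downarrow 0$ upgrades the $\cO_{a.s.}$-bound to the desired $o_{a.s.}$-bound. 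The main technical hurdle sits in the second step: passing from the add-one cost moment hypothesis \eqref{C:UniformBoundedMoments}, which describes inserting a single point, to a uniform $L^p$-bound on the swap-one-cube increment $\Delta(z, n) - \Delta(z, \infty)$ requires careful telescoping through the random number of Poisson points in $Q_z$, and keeping the constants in the H\"older estimate for the interior tight enough that the logarithmic width $w_n$ suffices to absorb the $n$-dependent polynomial growth bound.
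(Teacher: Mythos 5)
Your argument is correct, and its core --- the martingale-difference structure of $\E{\Delta(z,n)-\Delta(z,\infty)\mid\cF_z}$ in the lexicographic filtration, Burkholder plus Minkowski to reduce to $\big(\sum_z\|g_z\|_p^2\big)^{1/2}$, and the stabilization-driven split into a boundary layer (width $S^*$ in \ref{itm:model1}, a logarithmic width in \ref{itm:model2}) with uniformly bounded moments versus an interior handled by H\"older against the exponential tail of $S_z$ --- is exactly the paper's route to the bound $\|R_n\|_p=\cO(n^{(d-1)/(2d)})$ up to logarithmic factors. The one genuine divergence is the concluding step. The paper converts the moment bound into a maximal inequality by the crude estimate $\E{\max_{k\le n}|R_k|^p}\le n\max_{k\le n}\E{|R_k|^p}$ and then invokes its Lemma~\ref{L:LILTool}, a dyadic-blocking Borel--Cantelli device, obtaining an $\cO_{a.s.}$ bound that is upgraded to $o_{a.s.}$ by varying $\delta$. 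You instead apply Markov and Borel--Cantelli termwise to the events $\{|R_n|>x_n\}$, observing that the extra factor $n^{1/p}(\log n)^{1+\delta}$ built into the threshold makes $\sum_n\p(|R_n|>x_n)\lesssim\sum_n n^{-1}(\log n)^{-p(1+\delta)}$ convergent, and then intersect over $\epsilon_k\downarrow 0$. This is a legitimate simplification: no maximal inequality is needed because the individual probabilities are already summable, and both arguments pay the identical $n^{1/p}$ price, so the final rate is the same (your version in fact tolerates any log-exponent exceeding $1/2+1/p$). The only place where you are lighter than the paper is the uniform moment bound $\|U_z^n\|_p\le M$ on the boundary layer in model~\ref{itm:model2}, which you correctly flag as the technical hurdle; the paper outsources this to its Lemma~\ref{L:Moments} via a de-Poissonization argument, and your telescoping sketch is the right way to fill it in.
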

\begin{proof}
We derive a maximal inequality, this will then enable us to prove the claim. We define $Y_{n,z} =  \E{ \Delta(z,n) - \Delta(z,\infty) | \cF_z }$ for $n\in\N$ and $z\in B_n$. Since the $Y_{n,z}$ are martingale differences (w.r.t. the lexicographic ordering), we can apply the Burkholder inequality and the Minkowski inequality
\begin{align}\label{E:LILPoisson2v}
	 \big\|  \sum_{z\in B_n} Y_{n,z} \big \|_p &\le C \big \| \big( \sum_{z\in B_n} Y_{n,z}^2 \big)^{1/2} \big \|_p \le C \big( \sum_{z\in B_n} \big \| Y_{n,z}^2 \big \|_{p/2} \big)^{1/2}.
\end{align} 
Moreover, 
$$ 
 \| Y_{n,z}^2 \|_{p/2} \le \| ( \Delta(z,n)-\Delta(z,\infty) ) \1{\Delta(z,n) \neq \Delta(z,\infty)} \|_p^2.
 $$
  We treat the two models separately for the following intermediate calculations.

\textit{Model \ref{itm:model1}.} We define $B''_n = \{ z\in B_n : d(Q_z,\partial W_n) \le S^* \}$. Then the cardinality of $B''_n$ is at most $C S^* n^{(d-1)/d}$ for a constant $C\in\R_+$ and $\Delta(z,n)-\Delta(z,\infty) \equiv 0$ for all $z\notin B''_n$. Using Lemma~\ref{L:Moments}, we have for each $p'\ge 1$ that $\EE{ |\Delta(z,n)-\Delta(z,\infty)|^{p'} } \le C$  uniformly in $n\in\N$ and $z\in B''_n$ for a constant which only depends on $p'$. Hence, the right-hand side of \eqref{E:LILPoisson2v} is bounded above by $C (\# B''_n)^{1/2}$, which is $\cO(n^{(d-1)/(2d)})$.

\textit{Model \ref{itm:model2}.} We proceed very similarly but need to take into account that stabilization is random but at an exponential rate. Let $\gamma>1$. Define $r_n = (\log n)^{\gamma}$ and $B''_n = \{z\in B_n: d(Q_z,\partial W_n) \le r_n \}$. 
We use that $\EE{ |\Delta(z,n)|^p + |\Delta(z,\infty)|^p } \le C$ uniformly in $n\in \N$ and $z\in B_n$ by Lemma~\ref{L:Moments}. We use this upper bound if $z\in B''_n$.

If $z\notin B''_n$, we use a less conservative upper bound instead. We have
\begin{align}
		& \Delta(z,n) - \Delta(z,\infty) \nonumber \\
		\begin{split} \label{E:LILPoisson3v} 
		&= \Big\{H(\cP_n) - H(\cP''_{n,z} )  - H( \cP_n \cap B(z,r_n) ) + H( \cP''_{n,z} \cap B(z,r_n) ) \Big\}
		\end{split}\\
		\begin{split}\label{E:LILPoisson4v}
		&\quad + \Big\{	H( \cP \cap B(z,r_n)) - H(\cP''_z \cap B(z,r_n)) \\
		&\quad  - H( \cP  \cap B(z,S_z)) + H( \cP''_z  \cap B(z,S_z)) \Big\}
		\end{split}
\end{align}
because $\cP_n \cap B(z,r_n)\equiv \cP \cap B(z,r_n)$ and similarly $\cP''_{n,z}\cap B(z,r_n) \equiv \cP''_{z}\cap B(z,r_n)$. 

Using the polynomial boundedness of the functional $H$ in \ref{itm:model2} together with the representation in \eqref{E:LILPoisson3v} and \eqref{E:LILPoisson4v}, one sees that for $z\notin B''_n$
\begin{align*}
	| \Delta(z,n) - \Delta(z,\infty) | &\le C \Big( n^\beta + \#(\cP \cap W_n)^\beta + \#(\cP''_z \cap W_n)^\beta \\
	&\qquad\quad + r_n^\beta + \#( \cP \cap B(z,r_n))^\beta  + \#( \cP''_z \cap B(z,r_n))^\beta  \\
	&\qquad\quad + S_z^\beta + \#( \cP \cap B(z,S_z))^\beta + \#( \cP''_z \cap B(z,S_z))^\beta \Big)
\end{align*}
for a certain $\beta\in\R_+$. Clearly, $\E{ \# (\cP \cap B(0,w))^\beta }$ is of order $w^{\beta d}$.

Note that by the assumption of the exponential stabilization of the add-one cost function (see \eqref{E:GeneralStabilization} and \eqref{C:ExpStabilization}), it is an immediate consequence that also $\p(S_z>r_n)$ decays as $e^{-c r_n}$. So, all moments of $S_z$ exists. Moreover,
\begin{align*}
	\E{ \#( \cP \cap B(z,S_z))^\beta } &\le \sum_{n=1}^\infty \E{ \#( \cP \cap B(z,n))^\beta \1{ S_z \in (n-1,n] } } \\
	&\le \sum_{n=1}^\infty \E{ \#( \cP \cap B(z,n))^{2 \beta} }^{1/2}  \p( S_z > n-1 )^{1/2} = C_\beta < \infty.
\end{align*}
All in all, for each $\beta'>0$, there is an $\alpha>0$ such that $\EE{|\Delta(z,n)-\Delta(z,\infty)|^{\beta'}}$ is of order $n^{\alpha}$ uniformly in $z\notin B''_n$.
Consequently, using that $\p(S_z>r_n)$ is of order $n^{-c (\log n)^{\gamma-1}}$, we see with the H{\"o}lder inequality that 
$$
	\sum_{z\notin B''_n} \| \Delta(z,n)-\Delta(z,\infty) \|_p^2 = o( \# B''_n ).
$$
Moreover, by the above considerations $\sum_{z\in B''_n} \| \Delta(z,n)-\Delta(z,\infty) \|_p^2 = \cO( \# B''_n )$. So, the right-hand side of \eqref{E:LILPoisson2v} is of order $\cO( (\log n)^{\gamma/2} n^{ (d-1)/(2d) } )$.

This puts us in position to derive the maximal inequality for both models. We have
\begin{align*}
		&\E{ \max_{k\le n} \left | \sum_{z\in B_k} \E{ \Delta(z,k) - \Delta(z,\infty) | \cF_z } \right |^p }^{1/p} \nonumber \\
		&\le n^{1/p} \max_{k\le n}  \E{ \left | \sum_{z\in B_k} \E{ \Delta(z,k) - \Delta(z,\infty) | \cF_z } \right |^p }^{1/p} \le C (\log n)^{\gamma/2} n^{1/p +  (d-1)/(2d)  }.
\end{align*}
Since $\gamma/2 > 1/2$, an application of Lemma~\ref{L:LILTool} yields 
\[
	  \Big | \sum_{z\in B_n} \E{ \Delta(z,n) - \Delta(z,\infty) | \cF_z } \Big |  = \cO_{a.s.} ((\log n)^{3/2+\delta} \ n^{1/p + (d-1)/(2d) } )
	\]
for each $\delta>0$ and proves the claim as the last result it is true for all $\delta>0$.
\end{proof}

We can now prove LIL for the Poisson process.
\begin{proof}[Proof of Theorem~\ref{T:LILPoisson}]
We use the following decomposition using martingale differences
\begin{align}\begin{split}\label{E:LILPoisson1}
	H(\cP_n) - \E{ H(\cP_n)} &=   \sum_{z\in B_n }  \E{ H(\cP_n ) - H(\cP''_{n,z})  | \cF_z } = \sum_{z\in B_n } \E{ \Delta(z, n)  | \cF_z } \\
	&= \sum_{z\in B_n } \E{ \Delta(z,\infty)  | \cF_z }  + \sum_{z\in B_n } \E{ \Delta(z,n) - \Delta(z,\infty) | \cF_z }.
\end{split}\end{align}
The remainder in \eqref{E:LILPoisson1} is of order $o_{a.s.}( \sqrt{n} )$ in both models, see Proposition~\ref{P:OrderRemainderPoisson}.

The leading term involving the $F_z = \E{ \Delta(z,\infty)  | \cF_z } $, $z\in B_n$, satisfies Theorem~\ref{T:GeneralLIL}. We explain in Lemma~\ref{L:TauSigma} that $\tau^2$ is positive, hence, also $\sigma^2$ is positive. This completes the proof.
\end{proof}

\subsection{The LIL for the binomial process}
\begin{proposition}\label{P:LILBinomialError}
Assume model~\ref{itm:model1} or \ref{itm:model2}. Then for all $\delta>0$ 
\begin{align}
			&   H( \cU_{n,n} ) - H( \cU_{n,N_n} ) - \alpha (n- N_n)  = o_{a.s.}(n^{ 1/4+1/p } (\log n)^{1+\delta} ).   \label{E:LILBinomialError1}
\end{align}
\end{proposition}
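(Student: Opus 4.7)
The proof is a de-Poissonization argument. First, by Kolmogorov's LIL for $N_n\sim\poi(n)$, $|n-N_n| = \cO(\sqrt{n\log\log n})$ a.s., so with $K_n := \lceil c\sqrt{n\log\log n}\rceil$ we have $\#I_n \le K_n$ eventually. Under the coupling \eqref{E:CouplingBinom} the configurations $\cU_{n,n}$ and $\cU_{n,N_n}$ differ by exactly $|n-N_n|$ points, which leads to the telescoping identity
\begin{align*}
H(\cU_{n,n})-H(\cU_{n,N_n})-\alpha(n-N_n) = \sum_{i\in I_n}(\Delta_{n,i}-\alpha),
\end{align*}
with $\Delta_{n,i} := H(\cU_{n,i+1})-H(\cU_{n,i})$ (sign chosen according to whether $n \ge N_n$ or $n < N_n$). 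It therefore suffices to bound $R_n := \max_{|m-n|\le K_n}\bigl|\sum_{i=n\wedge m}^{n\vee m-1}(\Delta_{n,i}-\alpha)\bigr|$ uniformly over the deterministic range $|m-n|\le K_n$, and then combine with the almost-sure inclusion $\#I_n \le K_n$.

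I would split each increment as $\Delta_{n,i} - \alpha = M_{n,i} + b_{n,i}$, where $M_{n,i} := \Delta_{n,i} - \EE{\Delta_{n,i}\mid\cH_{n,i}}$ and $b_{n,i} := \EE{\Delta_{n,i}\mid\cH_{n,i}} - \alpha$, with $\cH_{n,i}$ the $\sigma$-field generated by the Poisson atoms $X_1,\ldots,X_{N_n}$ and the first $i$ uniform augmentation points $V_{n,1},\ldots,V_{n,i}$. By \eqref{C:UniformBoundedMoments} the $(M_{n,i})_i$ form a martingale difference sequence uniformly bounded in $L^p$, and Burkholder-Davis-Gundy combined with Doob's maximal inequality gives
\begin{align*}
\Big\|\max_{|m-n|\le K_n}\Big|\sum_{i=n\wedge m}^{n\vee m-1} M_{n,i}\Big|\Big\|_p \le C\,K_n^{1/2} = \cO\bigl(n^{1/4}(\log\log n)^{1/4}\bigr).
\end{align*}
Markov's inequality at exponent $p$ along the geometric subsequence $n_k = 2^k$ (with interpolation between $n_k$ and $n_{k+1}$ via monotonicity of $K$) and Borel-Cantelli then convert this into the almost-sure bound $o(n^{1/4+1/p}(\log n)^{1+\delta})$ for the martingale part; the extra factor $n^{1/p}$ is exactly the loss from the $L^p$-to-a.s.\ conversion.

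For the bias $b_{n,i}$, stabilization implies that the conditional expectation of the add-one cost of inserting a uniform point matches $\alpha$ whenever the radius of stabilization at that point does not reach $\partial W_n$: deterministically for insertion points at distance $> S^*$ from the boundary in model~\ref{itm:model1} (complementary probability $\cO(n^{-1/d})$), and with exponentially small exceptions in model~\ref{itm:model2} after truncating the radius at $(\log n)^\gamma$. Incorporating the Poisson-versus-binomial intensity correction of order $|i-n|/n = \cO(n^{-1/2})$ for interior insertions, and controlling the boundary contribution via H\"older's inequality with the $L^{pr/(r-1)}$ moments of \eqref{C:UniformBoundedMoments}, the accumulated bias $\sum_i|b_{n,i}|$ is of smaller order than $n^{1/4+1/p}(\log n)^{1+\delta}$ in both models.

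The main technical obstacle is the simultaneous control of the bias and the martingale part over the \emph{random} range $I_n$. A uniform insertion point lands near $\partial W_n$ with probability $\cO(n^{-1/d})$, and on such events $\EE{\Delta_{n,i}\mid\cH_{n,i}}$ need not equal $\alpha$; a careful separation of the interior contribution (where stabilization and the binomial-Poisson approximation force closeness to $\alpha$) from the boundary contribution (bounded via the $L^{pr/(r-1)}$ moment condition and the polynomial growth \eqref{C:GrowthM1}, respectively the polynomial boundedness of $H$ in \ref{itm:model2}) is needed to keep the total within the target rate while respecting the randomness of $N_n$.
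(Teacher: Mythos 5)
Your architecture (truncate $|N_n-n|$, telescope over the points added one at a time, obtain an $L^p$ bound of order $n^{1/4}$ up to logarithms, and pay the factor $n^{1/p}$ in the $L^p$-to-a.s.\ conversion along dyadic blocks) coincides with the paper's, but your central estimate is a martingale/Burkholder argument, whereas the paper expands the $p$-th moment of $\sum_{m\in I_n}(H(\cU_{n,m+1})-H(\cU_{n,m})-\alpha)$ directly over tuples $(m_1,\dots,m_p)$, centers the factors of multiplicity one by comparison with a Poissonized configuration, and counts tuples according to how many indices occur exactly once. That route sidesteps both of the problems below. First, a repairable but real error: your filtration $\cH_{n,i}$ contains \emph{all} Poisson atoms $X_1,\dots,X_{N_n}$. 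On the event $N_n>n$ one has $I_n=\{n,\dots,N_n-1\}$ and $\Delta_{n,i}=H(\{X_1,\dots,X_{i+1}\})-H(\{X_1,\dots,X_i\})$ is already $\cH_{n,i}$-measurable, so $M_{n,i}\equiv 0$ and the entire sum sits in the bias part, where your decomposition yields only the trivial bound $\cO(\#I_n)=\cO(\sqrt{n\log\log n})$. You need the natural filtration of the growing configuration, $\cG_{n,i}=\sigma(N_n,\,X_1,\dots,X_{i\wedge N_n},\,V_{n,1},\dots,V_{n,(i-N_n)\vee 0})$, under which, conditionally on $N_n$, the point revealed at step $i$ is a fresh uniform point in both directions of the telescoping.

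Second, and more substantively, the bias treatment rests on a false premise. With the corrected filtration, $b_{n,i}+\alpha=\mathbb{E}[\Delta_{n,i}\mid\cG_{n,i}]=|W_n|^{-1}\int_{W_n}\big(H(\cU_{n,i}\cup\{y\})-H(\cU_{n,i})\big)\,\mathrm{d}y$ is a \emph{random} average over the insertion location only; it does not equal $\alpha$ for interior insertions, even exactly on the stabilization event, because stabilization pins down the add-one cost as a function of the local configuration, not its value, while $\alpha=\mathbb{E}[\fD_0(\cP,\infty)]$ averages over configurations. What is actually available is (i) a deterministic part $\mathbb{E}[\Delta_{n,i}]-\alpha=\cO(n^{-(1/2-1/p)})$, coming from the binomial-versus-Poisson comparison as in \eqref{E:LILBinomialErrorDiff4a} plus boundary effects, and (ii) a random part $\mathbb{E}[\Delta_{n,i}\mid\cG_{n,i}]-\mathbb{E}[\Delta_{n,i}]$ which concentrates at scale $n^{-1/2}$ only because it is an average of $\cO(n)$ weakly dependent local contributions. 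Point (ii) is an additional uniform $L^p$ moment bound for a centered spatial average, with covariance decay supplied by stabilization, that you neither state nor prove; and it cannot be skipped, since the $b_{n,i}$ for nearby $i$ are essentially identical, so no cancellation in $i$ is available and one must beat $K_n\cdot\max_{i}|b_{n,i}|$ to stay below $n^{1/4+1/p}$. The missing estimate is provable, so your approach can be completed, but as written the proof of the bias bound is a genuine gap rather than a routine omission.
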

\begin{proof}
We distinguish the cases $|N_n-n|\ge n^{1/2+1/p}$ and $|N_n - n| < n^{1/2+1/p}$. An application of the Borel-Cantelli Lemma reveals that $\{ |N_n - n| \ge n^{1/2+1/p} \}$ is empty for almost all $n\in\N$ with probability 1, for each $p>0$. In particular, 
\[
	\limsup_{n\to\infty} h(n) \Big| H( \cU_{n,n} ) - H( \cU_{n,N_n} ) - \alpha (n- N_n) \Big| \1{ |N_n - n| \ge n^{1/2+1/p} } = 0 \quad a.s.
\]
for any sequence $(h(n))_n \subset \R_+$.

Now, we show the claim on the set $\{ |N_n - n| < n^{1/2+1/p} \}$ in two steps. In the first step, we compute the $p$th moment of \eqref{E:LILBinomialError1}, when conditioned on $N_n$. In the second step, we conclude.

\textit{Step 1.} Using the requirement that $p\in\N_+$ is even, we show
\begin{align}\begin{split}\label{E:LILBinomialErrorDiff1}
	&\E{  \Big| H( \cU_{n,n} ) - H( \cU_{n,N_n} ) - \alpha (n- N_n) \Big|^p \Big| N_n } \1{ |N_n-n| \le n^{1/2+1/p} } \\
	& \le C |N_n - n|^{p/2} \quad a.s.
\end{split}\end{align}
for a constant $C\in\R_+$, which does neither depend on $N_n$ nor on $n$. We use the decomposition
\begin{align}\label{E:LILBinomialErrorDiff1b}
		| H( \cU_{n,n} ) - H( \cU_{n,N_n} ) - \alpha (n- N_n) | &= \Big| \sum_{m \in I_n }  (H( \cU_{n,m+1} ) - H( \cU_{n,m} ) - \alpha ) \Big|,
\end{align}
where $I_n$ is given in \eqref{E:LILBinomialError1b}. Let $n\in\N$ be arbitrary but fixed. Define $Y_m \coloneqq \ \} (\cU_{n,m+1} \setminus \cU_{n,m}) \{$ as the unique point in $\cU_{n,m+1}$ that is not contained in $\cU_{n,m}$. 

Computing the left-hand side of \eqref{E:LILBinomialErrorDiff1} with the equality in \eqref{E:LILBinomialErrorDiff1b}, amounts to calculate
\begin{align}
		&\sum_{ (m_1,\ldots,m_p) \in I_n^p } \E{ (H( \cU_{n,m_1+1} ) - H( \cU_{n,m_1} ) - \alpha ) \cdot \ldots \cdot (H( \cU_{n,m_p+1} ) - H( \cU_{n,m_p} ) - \alpha ) } \nonumber \\
		\begin{split}\label{E:LILBinomialErrorDiff2}
		&= \sum_{ (m_1,\ldots,m_p) \in I_n^p} \mathbb{E}\Big[ \mathbb{E}\Big[ (H( \cU_{n,m_1+1} ) - H( \cU_{n,m_1} ) - \alpha ) \cdot \ldots \\
		&\qquad\qquad\qquad\qquad \cdot (H( \cU_{n,m_p+1} ) - H( \cU_{n,m_p} ) - \alpha ) \ \Big| Y_{m_1}, \ldots, Y_{m_p} \Big] \Big].
		\end{split}
\end{align}
Let $(m_1,\ldots,m_p)$ be a generic tuple and consider a factor $(H( \cU_{n,m_k+1} ) - H( \cU_{n,m_k} ) - \alpha )$ which occurs with a multiplicity of $a_k\in \{1,\ldots,p\}$ in the product.
The details depend now on the selected model.

\textit{Model~\ref{itm:model1}.} 
Denote $Z_m = H( \cU_{n,m+1} ) - H( \cU_{n,m} ) - \alpha $. Let $\cP_{n,m}$ be the homogeneous Poisson process on $\R^d$, which consists of the Poissonized binomial process $\cU_{n,m}$ on $W_n$ and an independent homogeneous Poisson process $\cP^*$ on $\R^d\setminus W_n$. Note that $\cP_{n,m}$ is independent of $Y_{m}$. Then,
\begin{align*}
	Z_m &= \{ H( \cU_{n,m+1} ) - H( \cU_{n,m} ) -  H( \cP_{n,m}\cup \{Y_m\} ) + H( \cP_{n,m} ) \} \\
	&\quad + \{ H( \cP_{n,m}\cup \{Y_m\} ) - H( \cP_{n,m} ) -  \alpha \} =: Z'_m + Z^{\dagger}_m
\end{align*}
and $\EE{ Z^\dagger_m | Y_m } = 0$. Consider a generic tuple $(m_1,\ldots,m_p)$ in the sum in \eqref{E:LILBinomialErrorDiff2} and replace each factor $Z_{m_k}$ by $Z'_{m_k}+Z^\dagger_{m_k}$.

If $A_k = \{ d(Y_{m_k}, \cup_{j: j\neq k} Y_{m_j} ) > 2S^*\}$ occurs, then $Z^\dagger_{m_k}$ is independent of all other factors conditionally on the points $Y_{m_1},\ldots,Y_{m_p}$. And if additionally the multiplicity $a_k=1$, then the expectation of this factor is close to zero. Otherwise, if $A_k$ does not occur, there is a certain dependence between this and other factors, however, $1 - \p( A_k) \le c n^{-1}$.

These ideas will then lead to the claim. We introduce now some notation and write 
\[
	L = \{k \in  \{1,\ldots,p\} \ | \ m_i \neq m_k: \ \forall i < k, i\in\{1,\ldots,p\} \}
	\]
for all indices of the generic tuple which occur and $L^* = \{k\in L: a_k = 1 \}$ for the indices which occur exactly once. (So, $L$ and $L^*$ both depend on the tuple $(m_1,\ldots,m_p)$.) 

Consider the expectation in \eqref{E:LILBinomialErrorDiff2} for a generic tuple $(m_1,\ldots,m_p)$ now in terms of the multiples $a_k$. We have
\begin{align}
		&\E{ \E{ \prod_{k\in L } Z_{m_k} ^{a_k} \ \Bigg| Y_{m_k}: k\in L } } \nonumber \\
		&= \mathbb{E}\Bigg[  \prod_{k\in L^* } ( \1{ A_k} + \1{A_k^c} ) \ \mathbb{E}\Bigg[  \prod_{k\in L^* }  Z_{m_k}   \prod_{k\in L \setminus L^* }  Z_{m_k} ^{a_k} \    \Bigg| Y_{m_k}: k\in L \Bigg]  \Bigg] \nonumber \\		
		&= \sum_{ w\in \{0,1\}^{\# L^*} } \mathbb{E}\Bigg[  \prod_{k\in L^* } \1{ A_k}^{w_k} \1{A_k^c}^{1-w_k} \ \mathbb{E}\Bigg[  \prod_{k\in L^* }  Z_{m_k} \prod_{k\in L \setminus L^* }  Z_{m_k}^{a_k} \    \Bigg| Y_{m_k}: k\in L \Bigg]  \Bigg] \nonumber \\		
		\begin{split}
		&= \sum_{ w\in \{0,1\}^{\# L^*} } \mathbb{E}\Biggl[ \prod_{\substack{k\in L^*:\\ w_k=0} } \1{A_k^c}  \mathbb{E}\Biggl[ \prod_{\substack{k\in L^*:\\ w_k=1} } \Big\{ \1{ A_k} \1{E_{m_k}}  Z'_{m_k} \Big\} \\
		&\qquad\qquad\qquad\qquad\qquad  \prod_{\substack{k\in L^*:\\ w_k=0} }  Z_{m_k}   \prod_{k\in L \setminus L^* }  Z_{m_k}^{a_k} \    \Bigg| Y_{m_k}: k\in L \Biggl] \Biggl], \label{E:LILBinomialErrorDiff3}
		\end{split}
\end{align}
where $E_{m} =  \{ \cU_{n,m} \cap B(Y_{m},S^*)  \neq \cP_{n,m} \cap B(Y_{m},S^*) \}$ and where we use that $Z'_{m_k}$ can only differ from 0 if $E_{m_k}$ occurs. We study the deviations of $Z'_{m_k}$ from zero:\begin{align}
			\p( E_{m_k} | Y_{m_k} ) &\le \mathbb{E}\Big[ \sum_{Y\in \cP_{n,m_k} \setminus \cU_{n,m_k} } \1{ Y \in B(Y_{m_k},S^*) } \nonumber \\
			&\quad\qquad\qquad + \sum_{Y\in \cU_{n,m_k} \setminus  \cP_{n,m_k}} \1{ Y \in B(Y_{m_k},S^*) }  | Y_{m_k} \Big]\nonumber\\
			&\le \frac{C}{n} \ \E{ |\cP_{n,m_k}(W_n) - m_k | + \cP_{n,m_k}(B(Y_{m_k},S^*) \cap (\R^d\setminus W_n) ) } \nonumber\\
			&\le \frac{C}{n} \ ( n^{1/2} + n^{1/2+1/p} + 1 ) = \cO(n^{-(1/2-1/p)}). \label{E:LILBinomialErrorDiff4a}
\end{align}
Furthermore, using the condition that $|N_n-n| \le n^{1/2+1/p}$ and Lemma~\ref{L:Moments}, we find for each $\wt p\in\N$
\begin{align}
		 \E{  \big|Z_{m_k}  \big|^{\wt p} \big| Y_{m_k} } \vee  \E{  \big|Z_{m_k}  \big|^{\wt p} \big| Y_{m_k} : k \in L} &\le  C, \label{E:LILBinomialErrorDiff5}
\end{align}
uniformly in $m_k \in I_n$ and $n\in\N$, clearly, the similar upper bounds are valid for $Z'_{m_k}$. 

 Then \eqref{E:LILBinomialErrorDiff3} is bounded above by
 \begin{align}
 	&C\ \cdot \sum_{ w\in \{0,1\}^{\# L^*} } \mathbb{E}\Big[  \prod_{\substack{k\in L^*:\\ w_k=0} } \1{A_k^c}  \prod_{\substack{k\in L^*:\\ w_k=1} }  \Big\{ \1{ A_k} \p( E_{m_k} |Y_{m_k})^{1/r} \Big\} \Big] \nonumber \\
	&\le C \ \cdot \sum_{ w\in \{0,1\}^{\# L^*} } \p( \cap_{k\in L^*: w_k = 0 } A_k^c ) \ n^{-(1/2-1/p) \|w\|_1 / r},
	\label{E:LILBinomialErrorDiff4}
 \end{align}
where $\|\cdot\|_1$ is the $\ell^1$-norm and where we apply the H{\"o}lder inequality with $r\in (1, (2p-4)/(p+2) )$ together with the result from \eqref{E:LILBinomialErrorDiff4a} and \eqref{E:LILBinomialErrorDiff5} (for $\wt p = p r / (r-1)$). It remains to derive an upper bound for the probability in \eqref{E:LILBinomialErrorDiff4}. To this end, set $b = \#L^* - \|w\|_1$. Let $\{k\in L^*: w_k = 0\}$ be given by $\{k_1,\ldots,k_b\}$. Then
\begin{align}
			\p( \cap_{k\in L^*: w_k = 0 } A_k^c ) &= \p( d( Y_{m_k}, \cup_{j: j\neq k} Y_{m_j} ) \le 2S^* \text{ for all } k\in L^* \text{ with } w_k=0 ) \nonumber \\ 
			&\le \sum_{j_1,\ldots,j_b = 1}^p \p( d( Y_{m_{k_1}}, Y_{m_{j_1}}) \le 2 S^* , \ldots, d( Y_{m_{k_b}}, Y_{m_{j_b}}) \le 2 S^* ) \label{E:LILBinomialErrorDiff5a}\\			
			&\le C n ^{- \ceil{(\# L^* - \|w\|_1)/2 } }. \label{E:LILBinomialErrorDiff5a2}
\end{align}
The last inequality can be derived as follows: Since the indices $m_{k_1},\ldots,m_{k_b}$ are all pairwise different, there are at least $\ceil{ b/2 }$ different sets of the type $\{Y_{m_{k_i}}, Y_{m_{j_i}}\}$ within each probability in \eqref{E:LILBinomialErrorDiff5a}. This corresponds to at least $\ceil{b/2}$ independent random variables and the claim follows.

Consequently, using the result from \eqref{E:LILBinomialErrorDiff4} and \eqref{E:LILBinomialErrorDiff5a2}, \eqref{E:LILBinomialErrorDiff3} is of order
\begin{align*}
		&\sum_{ w\in \{0,1\}^{\# L^*} }  n^{ -\ceil{(\# L^* - \|w\|_1)/2} } \ n^{-(1/2-1/p)  \| w \|_1 / r } \\
		&\le n^{-\# L^*/2} (1+n^{1/2 - 1/(2r) + 1/(pr) })^{\# L^*} \le C n^{ - \# L^* (1/2-1/p)/r }.
\end{align*}
We come to the conclusion for the sums in \eqref{E:LILBinomialErrorDiff2}. Evidently, the cardinality of $ L^*$, which counts the tuples in $(m_1,\ldots,m_p)$ that occur exactly once, can range from 0 to $p$. So, it remains to determine the number of combinations for which $\# L^* = j$ for each $j\in\{1,\ldots,p\}$. This can be done with an elementary combinatorial argument: The number of combinations which determine the indices that occur exactly once is at most $(\# I_n)^j$. The number of free positions in $I_n$ is $p-j$, however, each position has to be occupied by two indices, hence, it remain at most $(\# I_n)^{ \floor{(p-j)/2} }$ combinations for the indices with a multiplicity of at least 2. This means the number of combinations for which $\# L^* = j$ is at most $(\# I_n)^{ j +  \floor{(p-j)/2} } = (\# I_n)^{ \floor{j/2} + p/2 }$ because $p$ is even. Consequently,
\[
		\# \{ (m_1,\ldots,m_p) \in I_n^p: L^*(m_1,\ldots,m_p) = j \} \le (\# I_n )^{ p/2 + \floor{ j /2}  }.
\]
Consequently, since $r\in (1, (2p-4)/(p+2) )$ and $(\# I_n ) \le n^{1/2+1/p}$, \eqref{E:LILBinomialErrorDiff2} is at most (times a suitable constant which is independent of $n$)
\begin{align}\begin{split}\label{E:LILBinomialErrorDiff5b}
	\sum_{(m_1,\ldots,m_p)\in I_n^p } \frac{1}{n^{ \# L^* (1/2-1/p)/r } } &\le \sum_{j=0}^{p}   \frac{(\# I_n )^{ p/2 +  \floor{ j/2 } } }{n^{j (1/2-1/p)/r }} \\
	&\le (\# I_n)^{p/2} \sum_{j=0}^{p}  \frac{n^{ (1/2+1/p ) \floor{ j/2 }} } {n^{  j (1/2-1/p)/r} } \le (p+1) (\# I_n)^{p/2}
	\end{split}
\end{align}
because of the choice of $r$. This shows \eqref{E:LILBinomialErrorDiff1} for the model \ref{itm:model1}. 

\textit{Model \ref{itm:model2}.} Let $n\in\N$ be arbitrary but fixed and let $r\in (1, (2p-4)/(p+2) )$ and $r_n \coloneqq n^\gamma$ for $\gamma \in (0, ( (2p-4) - (p+2)r )/(4dp) )$ (see also \eqref{E:ChoiceGamma} below). We keep the notation introduced in the calculations for model~\ref{itm:model1} and decompose the right-hand side of \eqref{E:LILBinomialErrorDiff1b} in two terms as follows
\begin{align}
			&\Big| \sum_{m\in I_n} \Big[ H(\cU_{n,m+1}) - H(\cU_{n,m}) \Big] - \Big[ H(\cU_{n,m+1}\cap B(Y_m,r_n) - H(\cU_{n,m} \cap B(Y_m,r_n)) \Big] \Big| \nonumber\\
			&=: \Big| \sum_{m\in I_n} \wt Z_m \Big|, \label{E:LILBinomialErrorDiff6} \\
			&\Big| \sum_{m\in I_n} \Big[ H(\cU_{n,m+1}\cap B(Y_m,r_n) - H(\cU_{n,m} \cap B(Y_m,r_n)) \Big] - \alpha \Big|. \label{E:LILBinomialErrorDiff7}
\end{align}
One can follow the calculations for model~\ref{itm:model1} to show that conditional on the event $\{ |N_n -n| \le n^{1/2+1/p}\}$ the $p$th-moment of the term in \eqref{E:LILBinomialErrorDiff7} is of order
\begin{align*}
	\sum_{j=0}^{p}  | I_n |^{ p/2 +  \floor{ j/2 } } \Big( \frac{r_n^d}{n^{1/2-1/p}} \Big)^{j/r} \le (p+1) |I_n|^{p/2},
\end{align*}
for the above choices of $\gamma$ and $r$ (compare with \eqref{E:LILBinomialErrorDiff4a}, \eqref{E:LILBinomialErrorDiff5a}, \eqref{E:LILBinomialErrorDiff5b}, where $r_n$ corresponds to $S^*$). Note that in this case we need the uniform bounded moments condition from \eqref{C:UniformBoundedMoments}
to be satisfied for $r/(r-1)p$, in particular,
\begin{align*}
	&\E{ |H(\cU_{n,m+1}) - H(\cU_{n,m})|^{p r/(r-1)} } \le C \qquad \text{ and}\\
	&	\E{ | H(\cU_{n,m+1}\cap B(Y_m,r_n) - H(\cU_{n,m} \cap B(Y_m,r_n))  |^{p r/(r-1)} } \le C
		\end{align*}
uniformly in $n$ and $m \in [1/2 n, 3/2 n]$. 

 Consider the approximation error in \eqref{E:LILBinomialErrorDiff6} on $\{ |N_n -n| \le n^{1/2+1/p}\}$. If $\wt Z_m \neq 0$, then
\[
	E_m =  \big\{ \cU_{n,m} \cap B(Y_m,r_n) \neq  \cP_n \cap B(Y_m,r_n) \big\}	\cup 	\big\{ \wt S > r_n \big\} 
\]
occurs; here $\wt S = \wt S(-Y_m,n,(\cP_n - Y_m)( Q_0),-Y_m)$ is the radius of stabilization of the point clouds $(\cP_n - Y_m)$ and $\{0 \}$ under the functional $H$ from \eqref{E:GeneralStabilization} and \eqref{C:ExpStabilization}. Consequently,
\begin{align}
		&\E{ \Big|\sum_{m\in I_n } \wt Z_m  \Big|^p \ \Big| \ N_n } \nonumber\\
		&= \sum_{(m_1,\ldots,m_p) \in I_n^p } \E{ \wt Z_{m_1}\1{ E_{m_1} } \cdot \ldots \cdot \wt Z_{m_p}\1{ E_{m_p} } \ | \ N_n } \nonumber \\
		&\le \sum_{(m_1,\ldots,m_p) \in I_n^p } \E{ | \wt Z_{m_1} \cdot \ldots \cdot \wt Z_{m_p} |^{r/(r-1)} }^{(r-1)/r} \p( E_{m_1} \cap \ldots \cap E_{m_p}  )^{1/r}. \label{E:LILBinomialErrorDiff8}
\end{align}
We focus on the probability in \eqref{E:LILBinomialErrorDiff8}. Once more, consider a generic tuple $(m_1,\ldots,m_p)$.
Since $\p( \wt S(y,n,k,x) > r_n )$ decays exponentially in $n$ (uniformly in the parameters $y,n,k,x$), the relevant part of the probability in \eqref{E:LILBinomialErrorDiff8} is
\begin{align}\label{E:LILBinomialErrorDiff9}
		&\p(  \cU_{n,m_i} \cap B(Y_{m_i},r_n) \neq  \cP_n \cap B(Y_{m_i},r_n) \text{ for } i \in \{1,\ldots,p\}  ).
\end{align}
We partition the domain $\Omega$ suitably,
$$
	\Omega = \bigcup_{w\in\{0,1\}^p } \Big( \bigcap_{i: w_i =1 } A_i \cap \bigcap_{i: w_i =0 } A_i^c \Big),
	$$
where $A_k = \{ d( Y_{m_k}, \cup_{j: j\neq k} Y_{m_j} )  > 2 r_n \}$ for $k \in\{1,\ldots, p\}$. Let $w\in \{0,1\}^p$. Then using the properties of the Poisson process, we obtain for this $w$
\begin{align*}
		&\p\Big(	\cU_{n,m_i} \cap B(Y_{m_i},r_n) \neq \cP_n \cap B(Y_{m_i},r_n) \text{ for } i\in\{1,\ldots,p \} \text{ with } w_i =1 \\
		&\qquad \big | \ Y_{m_i}, \1{A_i} \text{ for } i \in \{1,\ldots,p \} \text{ with } w_i = 1, \cU_{n,m_k} \cap B(Y_{m_k},r_n) \text{ for } k\in\{1,\ldots,p \} \Big) \\
		&= \prod_{i: w_i=1} \p\Big(	\cU_{n,m_i} \cap B(Y_{m_i},r_n) \neq \cP_n \cap B(Y_{m_i},r_n) \ \big | \ Y_{m_i}, \1{A_i}, \cU_{n,m_i} \cap B(Y_{m_i},r_n) \Big) \\
		&\le \prod_{i: w_i=1} \frac{C r_n^d}{n} \Big( \E{ |N_n - n| } + |n-m| \Big) = \cO\Big( \Big(\frac{ r_n^d}{n^{1/2 - 1/p} } \Big)^{\|w\|_1} \Big).
\end{align*}
Thus, \eqref{E:LILBinomialErrorDiff9} equals
\begin{align*}
		& \sum_{w\in \{0,1\}^p } \p\Big( \big\{ \cU_{n,m_i} \cap B(Y_{m_i},r_n) \neq  \cP_n \cap B(Y_{m_i},r_n) \text{ for } i \in \{1,\ldots,p\} \big\} \\
		&\qquad\qquad\quad \cap \bigcap_{i: w_i = 1 } A_i \cap \bigcap_{i: w_i = 0} A_i^c \Big ) \\
		&\le \sum_{w\in \{0,1\}^p } \Big(	\frac{ C r_n^d }{n^{1/2 - 1/p} } \Big)^{\|w\|_1 } \p\Big(	\bigcap_{i: w_i = 0} A_i^c \Big ) \\
		&\le \sum_{w\in \{0,1\}^p } \Big(	\frac{ C r_n^d }{n^{1/2 - 1/p} } \Big)^{\|w\|_1 } \Big(	\frac{ C r_n^d }{n} \Big)^{ \ceil{(p - \|w\|_1)/2 } } \\
		&\le C \Big(	\frac{ r_n^d }{n^{1/2 - 1/p} } \Big)^{p},
\end{align*}
for the derivation of the second but last inequality see  \eqref{E:LILBinomialErrorDiff5a}.		

So the $p$th moment of \eqref{E:LILBinomialErrorDiff6} on $\{ |N_n -n| \le n^{1/2 + 1/p}\}$ is bounded above by (up to a multiplicative constant)
\begin{align}\begin{split}\label{E:ChoiceGamma}
		&\Big(	\frac{ r_n^d }{n^{1/2 - 1/p} } \Big)^{p/r } |N_n - n|^p \1{ |N_n-n|\le n^{1/2 + 1/p} } \\ &\le  \Big(	\frac{ r_n^d }{n^{1/2 - 1/p} } \Big)^{p/r } n^{(1/2+1/p) p/2} \ |N_n - n|^{p/2}.
\end{split}\end{align}
If we choose $r\in (1, (2p-4)/(p+2) )$ and $\gamma \in (0, ( (2p-4) - (p+2)r )/(4dp) )$, then the left-hand side of \eqref{E:ChoiceGamma} is $o( |N_n - n|^{p/2} )$. In particular, \eqref{E:LILBinomialErrorDiff1} is also satisfied in model~\ref{itm:model2}.

\textit{Step 2.} Using Jensen's inequality and properties of the maximum, we find with the result of the first step
\begin{align*}
		& \E{ \max_{k \in \{1,\ldots,n\} } \Big| H( \cU_{k,k} ) - H( \cU_{k,N_k} ) - \alpha (k- N_k) \Big|  \1{ |N_k-k| \le k^{1/2 + 1/p} } } \\
		&\le n^{1/p} \max_{k \in \{1,\ldots,n\} } \mathbb{E}\Big[ \Big| H( \cU_{k,k} ) - H( \cU_{k,N_k} ) - \alpha (k- N_k) \Big|^p \\
		&\quad\qquad\qquad\qquad\qquad \cdot \1{ |N_k-k| \le k^{1/2 + 1/p} } \Big]^{1/p}   \\
		&= n^{1/p} \max_{k \in \{1,\ldots,n\} } \mathbb{E} \Big[  \E{ \Big| H( \cU_{k,k} ) - H( \cU_{k,N_k} ) - \alpha (k- N_k) \Big|^p \  \Big | N_k } \\
		&\quad\qquad\qquad\qquad\qquad \cdot \1{ |N_k-k| \le k^{1/2 + 1/p} } \Big]^{1/p}   \\
		&\le C n^{1/p} \max_{k \in \{1,\ldots,n\} } \E{ |N_k - k|^{p/2} }^{1/p} \\
		&\le C n^{1/4+ 1/p},
		\end{align*}
where the last inequality follows because the $\ell$th centered moment of a Poisson random variable with parameter $\lambda$ is bounded above by $C_\ell \lambda^{\floor{\ell/2}}$ for some constant $C_\ell \in\R_+$ which only depends on $\ell$. The claim follows now from Lemma~\ref{L:LILTool}, using the additional factor $(\log n)^{1+\delta}$, for some $\delta >0$.
\end{proof}

\begin{proposition}\label{P:LILBinomialError2}
\hspace{-12pt} Assume model~\ref{itm:model1} or \ref{itm:model2}. Then $ \E{ H(\cU_{n,N_n} ) - H(\cU_{n,n}) } = \cO(n^{1/4})$.
\end{proposition}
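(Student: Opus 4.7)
The plan is to exploit Step~1 of the proof of Proposition~\ref{P:LILBinomialError}, which already provides the needed conditional $L^p$-bound; after that only elementary estimates remain. Since $N_n\sim\poi(n)$ satisfies $\E{N_n - n} = 0$, I will add and subtract $\alpha(n-N_n)$ without changing the expectation, which gives
\begin{align*}
	\E{ H(\cU_{n,N_n}) - H(\cU_{n,n}) } = -\E{ H(\cU_{n,n}) - H(\cU_{n,N_n}) - \alpha(n-N_n) },
\end{align*}
so it suffices to bound this latter expectation in absolute value by $C n^{1/4}$.

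I would then split along the events $A_n \coloneqq \{ |N_n - n| \le n^{1/2 + 1/p}\}$ and $A_n^c$. On $A_n$, Step~1 of Proposition~\ref{P:LILBinomialError} already establishes
\begin{align*}
	\E{ | H(\cU_{n,n}) - H(\cU_{n,N_n}) - \alpha(n-N_n) |^p \bigm| N_n}\, \1{A_n} \le C\, |N_n - n|^{p/2} \quad a.s.
\end{align*}
Applying Jensen's inequality to extract the $p$-th root inside the conditional expectation and then taking unconditional expectation yields
\begin{align*}
	\E{ | H(\cU_{n,n}) - H(\cU_{n,N_n}) - \alpha(n-N_n) |\, \1{A_n} } &\le C\, \E{|N_n - n|^{1/2}} \\
	&\le C\, \left( \E{|N_n - n|} \right)^{1/2} \le C\, n^{1/4},
\end{align*}
where I use concavity of $\sqrt{\,\cdot\,}$ and then $\E{|N_n - n|} \le \V{N_n}^{1/2} = n^{1/2}$.

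For the complementary event, the Chernoff bound for the Poisson distribution gives $\p(A_n^c) \le 2 \exp(-c\, n^{2/p})$, which decays faster than any polynomial in $n$. Together with the polynomial moment bound on $H(\cU_{n,m})$ (implicit in the growth condition \eqref{C:GrowthM1} for model~\ref{itm:model1} and coming directly from $|H(P)| \le \nu (\diam(P) + \# P)^\nu$ in model~\ref{itm:model2}), a Cauchy-Schwarz argument shows
\begin{align*}
		\left| \E{ (H(\cU_{n,n}) - H(\cU_{n,N_n}) - \alpha(n-N_n))\, \1{A_n^c} } \right| \le C\, n^{\kappa} \exp(-c\, n^{2/p}/2) = o(1)
\end{align*}
for some constant $\kappa$. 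Summing the two contributions proves the claim.

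The only obstacle I anticipate is bookkeeping: verifying that $H(\cU_{n,m})$ has moments that grow at most polynomially in $n$, uniformly in $m$ in a neighborhood of $n$. This follows by telescoping into add-one cost differences and invoking the uniform bounded moments condition \eqref{C:UniformBoundedMoments} (together with Lemma~\ref{L:Moments}), so no genuinely new ideas are required; the heavy lifting has already been performed in Step~1 of Proposition~\ref{P:LILBinomialError}.
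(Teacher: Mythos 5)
Your proposal is correct and follows essentially the same route as the paper: the same mean-zero recentering by $\alpha(n-N_n)$, the same split along $\{|N_n-n|\le n^{1/2+1/p}\}$, the reuse of the conditional $p$-th moment bound \eqref{E:LILBinomialErrorDiff1} together with Jensen and $\E{|N_n-n|^{1/2}}\le n^{1/4}$ on the good event, and an exponentially-small-probability argument on the complement. The only imprecision is your parenthetical for model~\ref{itm:model1}, where no global polynomial bound on $H$ is assumed; there the paper instead bounds each increment by $2C^*e^{qM_{n,m}}$ and controls the Poisson moment generating function, which is exactly the telescoping-plus-moments bookkeeping you correctly anticipate in your closing paragraph.
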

\begin{proof}
We will use the decomposition
\[
	H(\cU_{n,N_n} ) - H(\cU_{n,n}) = \sum_{m=n}^{N_n-1} H(\cU_{n,m+1} ) - H(\cU_{n,m}) - \sum_{m=N_n}^{n-1} H(\cU_{n,m+1} ) - H(\cU_{n,m}),
\]
with the convention that the first or the second sum is zero depending on whether $N_n < n$ or $N_n>n$. Moreover, we have $0= \EE{ \alpha( N_n-n) } = \EE{ \alpha( N_n-n) \1{ N_n > n} - \alpha(n - N_n) \1{N_n < n} }$. 
Using another time the definition of the set $I_n$ from \eqref{E:LILBinomialError1b}, we obtain
\begin{align}
 		&\Big |\E{ H(\cU_{n,N_n} ) - H(\cU_{n,n}) } \Big | \nonumber\\
		&\le  \E{ \Big | \sum_{m\in I_n} \Big( H(\cU_{n,m+1} ) - H(\cU_{n,m}) - \alpha \Big) \Big | } \nonumber \\
		&\le  \E{ \Big | \sum_{m\in I_n} \Big( H(\cU_{n,m+1} ) - H(\cU_{n,m}) - \alpha  \Big) \Big | \1{|N_n - n | \le n^{1/2 + 1/p} } } \label{E:LILBinomialErrorExp1a} \\
			&\quad +  \E{ \Big | \sum_{m\in I_n} \Big( H(\cU_{n,m+1} ) - H(\cU_{n,m}) - \alpha \Big) \Big | \1{|N_n - n | > n^{1/2 + 1/p} } }. \label{E:LILBinomialErrorExp1b}
\end{align}
First consider \eqref{E:LILBinomialErrorExp1a}. We deduce from \eqref{E:LILBinomialErrorDiff1} that for both models
\begin{align*}
			&\E{  \Big | \sum_{m\in I_n} \Big( H(\cU_{n,m+1} ) - H(\cU_{n,m}) - \alpha\Big)  \Big|^p  \ \Big| \ N_n } \\
			 &\qquad\qquad \1{ |N_n-n| \le n^{1/2 + 1/p}}  \le C |N_n - n|^{p/2} \quad a.s.
\end{align*}
Hence, the expectation in \eqref{E:LILBinomialErrorExp1a} is of order $\cO(n^{1/4})$ in both models~\ref{itm:model1} and \ref{itm:model2}.

Second, we consider \eqref{E:LILBinomialErrorExp1b} which is less than
\begin{align}\begin{split}\label{E:LILBinomialErrorExp1c}
		&\E{ \sum_{m\in I_n} | H(\cU_{n,m+1} ) - H(\cU_{n,m}) | \  \1{|N_n - n | > n^{1/2 + 1/p} } } \\
		&\quad + \alpha \ \E{ (N_n - n)^2 }^{1/2} \p(|N_n - n | > n^{1/2 + 1/p}	)^{1/2}
\end{split}\end{align} 
The second term in \eqref{E:LILBinomialErrorExp1c} decays exponentially and is therefore of order $o(n^{1/4})$. Regarding the first term, we need to argue differently for each model.

We begin with model \ref{itm:model1}. Let $n\in\N$ be arbitrary but fixed. Once more, write $Y_m = \} (\cU_{n,m+1} \setminus \cU_{n,m}) \{$. Denote $M_{n,m}$ the number of points of $\cU_{n,m}$ in $B(Y_m, S^*)$. Then $M_{n,m}$ follows a binomial distribution with length $m$ and a probability of order $(S^*)^d / n$. Thus, 
\begin{align*}
		\sum_{m\in I_n} \E{ | H(\cU_{n,m+1} ) - H(\cU_{n,m}) | \ | N_n } &\le \sum_{m\in I_n} \E{ 2 C^* e^{q M_{n,m}} } \\
		&\quad \le c' |N_n-n| \big(	e^{c N_n / n}  + 1 \big).
\end{align*}
for certain constants $c,c' \in\R_+$. Next, $\EE{\exp( c N_n/n )} = \exp( (e^{c  / n} -1) n ) \to \exp( c )$ as $n\to \infty$. This shows that also the first term in \eqref{E:LILBinomialErrorExp1c} decays at an exponential rate in model~\ref{itm:model1}.

Finally, consider \ref{itm:model2}. $|H(\cU_{n,m+1} )- H(\cU_{n,m} )|$ is of order $\cO_{a.s.}( (N_n + n)^\nu)$, for some $\nu\in\R_+$. As the probability $\p( |N_n - n| > n^{1/2 + 1/p} )$ decays exponentially, an application of the H{\" o}lder inequality shows that the first term in \eqref{E:LILBinomialErrorExp1c} is also of order $o(n^{1/4})$ under the assumptions of model~\ref{itm:model2}.
\end{proof}

\begin{lemma}\label{L:PoissonPointsRem}
For all $\delta>0$,
$$
	\sum_{z\in B_n} \E{ \Delta'(z,n) - \Delta'(z,\infty)  | \cF_{z} }  = o_{a.s.}(n^{(1/2-1/(2d))} (\log n)^{2+\delta}).
$$
\end{lemma}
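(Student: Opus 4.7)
My plan begins by computing $\E{\Delta'(z,n) - \Delta'(z,\infty) \mid \cF_z}$ in closed form. Since $\cP''_z = (\cP \setminus Q_z) \cup (\cP' \cap Q_z)$, the two integer-valued differences collapse onto the cube $Q_z$:
\begin{align*}
\cP(W_n) - \cP''_z(W_n) = \cP(W_n \cap Q_z) - \cP'(W_n \cap Q_z), \qquad \Delta'(z,\infty) = \cP(Q_z) - \cP'(Q_z).
\end{align*}
Because $\cP|_{Q_z}$ is $\cF_z$-measurable while $\cP'$ is independent of $\cF_z$, taking conditional expectations and subtracting yields
\begin{align*}
Y_z \; := \; \E{\Delta'(z,n) - \Delta'(z,\infty) \mid \cF_z} \; = \; -\bigl( \cP(Q_z \setminus W_n) - |Q_z \setminus W_n| \bigr).
\end{align*}
In particular $Y_z \equiv 0$ whenever $Q_z \subset W_n$, so only the boundary cubes $B_n^{\ast} := \{z \in B_n : Q_z \not\subset W_n\}$ contribute, and a direct count gives $\#B_n^{\ast} = \cO(n^{(d-1)/d})$.

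Next, I would exploit two structural facts about $(Y_z)_{z \in B_n^\ast}$. First, each nontrivial $Y_z$ is a centered Poisson variable with parameter $|Q_z \setminus W_n| \le 1$, hence has mean zero and variance at most $1$. Second, since the unit cubes $(Q_z)_{z \in \Z^d}$ are pairwise disjoint, the family $(Y_z)_{z \in B_n^{\ast}}$ is independent by the defining property of the Poisson process $\cP$. Consequently $T_n := \sum_{z \in B_n} Y_z$ is a sum of $\cO(n^{(d-1)/d})$ independent centered Poisson summands with bounded parameter, so its total variance satisfies $V_n = \cO(n^{(d-1)/d}) = \cO(n^{2(1/2 - 1/(2d))})$.

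I would conclude by a direct concentration/Borel--Cantelli argument. Bennett's inequality for sums of independent centered Poisson random variables gives $\p(|T_n| \ge t) \le 2\exp(-t^2/(2(V_n + t/3)))$. Setting $t_n = n^{1/2 - 1/(2d)} (\log n)^{1 + \delta/2}$ one checks $V_n \gg t_n$, so $t_n^2/(2(V_n + t_n/3)) \gtrsim (\log n)^{2 + \delta}$ and hence $\sum_n \p(|T_n| \ge t_n) < \infty$. The Borel--Cantelli lemma then yields
\begin{align*}
|T_n| \; = \; \cO_{a.s.}\bigl( n^{1/2 - 1/(2d)} (\log n)^{1 + \delta/2} \bigr) \; = \; o_{a.s.}\bigl( n^{1/2 - 1/(2d)} (\log n)^{2+\delta} \bigr),
\end{align*}
as claimed. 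The argument is quite clean, and I do not foresee a substantive obstacle: the only step requiring some care is the identification of the conditional expectation in the first paragraph, in particular verifying that $\cP'(W_n \cap Q_z)$ really is independent of $\cF_z$ so that it contributes only its deterministic mean $|W_n \cap Q_z|$. Once this identity is in hand, the sharp boundary-layer cardinality and the independence of the summands completely dictate the rate.
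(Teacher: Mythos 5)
Your proposal is correct, and the first half coincides with the paper's argument: the same closed-form identification $\E{\Delta'(z,n)-\Delta'(z,\infty)\mid\cF_z} = -\big(\cP(Q_z\setminus W_n)-|Q_z\setminus W_n|\big)$, the same reduction to the $\cO(n^{(d-1)/d})$ boundary cubes, and the same use of independence coming from the disjointness of the sets $Q_z\setminus W_n$. Where you diverge is the concluding concentration step. The paper bounds the Laplace transform of the sum (exploiting that the sum of the $\cP(Q_z\setminus W_n)$ is again Poisson), deduces an $L^1$ bound of order $n^{(d-1)/(2d)}\log n$ on the running maximum $\max_{k\le n}|\sum_{z\in B_k}(W_{k,z}-\E{W_{k,z}})|$, and then invokes the dyadic-blocking Lemma~\ref{L:LILTool} to convert this into an almost sure statement. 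You instead apply the Poisson/Bennett tail bound to each fixed $n$ and sum over \emph{all} $n$; since the tail probabilities decay like $\exp(-c(\log n)^{2+\delta})$, i.e.\ faster than any polynomial, the Borel--Cantelli lemma applies without any maximal inequality. This is a legitimate and in fact more direct route: the maximal-inequality machinery of Lemma~\ref{L:LILTool} is only needed when one has merely moment bounds, whereas here the exponential tails make a term-by-term union bound affordable. One small inaccuracy: your assertion that $V_n \gg t_n$ fails for $d=1$, where $V_n=\cO(1)$ while $t_n=(\log n)^{1+\delta/2}\to\infty$; in that regime the Bennett exponent is of order $t_n$ rather than $t_n^2/V_n$, which still yields $\p(|T_n|\ge t_n)\le 2n^{-c(\log n)^{\delta/2}}$ and hence summability, so the conclusion survives, but the dichotomy should be stated.
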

\begin{proof}
We have $ \E{ \Delta'(z,n) - \Delta'(z,\infty)  | \cF_{z} } =  -\cP(Q_z  \setminus B_n) + \E{ \cP(Q_z  \setminus B_n)} $, which is zero if $Q_z \subset B_n$. Write $B''_n$ for those $z\in B_n$ such that $Q_z$ is not a subset of $B_n$. $\# B''_n$ is of order $\cO(n^{(d-1)/d})$. Define  $W_{n,z} = \cP(Q_z  \setminus B_n)$. Consider the Laplace transform
\begin{align}\label{E:PoissonPointsRem1}
  		\E{\exp( \gamma \sum_{z\in B_n} W_{n,z} - \E{W_{n,z}} ) } &= \prod_{z\in B''_n} \E{ \exp(\gamma W_{n,z}) } \exp( - \gamma \E{ W_{n,z} } ).
\end{align}
Since $W_{n,z}$ is Poisson distributed with parameter $|Q_z \setminus B_n| = \E{ W_{n,z} }$, we have for all $\gamma\in\R_+$
\[
	\E{ \exp(\gamma W_{n,z}) }  = \exp\big( \E{ W_{n,z} } (e^\gamma - 1) \big).
\]
So, the right-hand side of \eqref{E:PoissonPointsRem1} equals
$
		\exp(\sum_{z\in B''_n} \E{ W_{n,z}}( e^\gamma - 1 -\gamma)	) \le \exp( c \gamma^2 \# B''_n ).
$
Consider
\begin{align}
		&\E{ \max_{1\le k\le n}  \Big|  \sum_{z\in B_k} W_{k,z} - \E{W_{k,z}} 		\Big| } \label{E:PoissonPointsRem2} \\
		 &\le n^a \log \E{ \max_{1\le k\le n} \exp\Big( \frac{1}{n^a} \Big|  \sum_{z\in B_k} W_{k,z} - \E{W_{k,z}} 		\Big| \Big) } \nonumber \\
		&\le n^a \log n + n^a  \log \Bigg\{ \max_{1\le k\le n} \E{  \exp\Big( \frac{1}{n^a}  \sum_{z\in B_k} W_{k,z} - \E{W_{k,z}} \Big) } \Bigg\} \nonumber \\
		&\quad +  n^a  \log \Bigg\{ \max_{1\le k\le n} \E{  \exp\Big( - \frac{1}{n^a}  \sum_{z\in B_k} W_{k,z} - \E{W_{k,z}} \Big) } \Bigg\} \nonumber \\
		&\le  n^a \log n + c n^{a-2a} n^{(d-1)/d}.
\end{align}
Hence, if $d>1$ or $d=1$, the choice  $a=(d-1)/(2d)$ equalizes the rate in both terms, which is then $n^{(d-1)/(2d)} \log n$. An application of Lemma~\ref{L:LILTool} yields the claim.
\end{proof}

\begin{proof}[Proof of Theorem~\ref{T:LILBinomial}]
We apply the following fundamental decomposition.
\begin{align}
		H( \cU_{n,n} ) - \E{ H(\cU_{n,n}) } &= H( \cU_{n,N_n} ) - \E{ H( \cU_{n,N_n} ) } - \alpha(N_n - n) \label{E:LILBinomial1} \\
		&\quad +   \Big\{ H( \cU_{n,n} ) - H( \cU_{n,N_n} ) - \alpha (n- N_n) \Big\} \label{E:LILBinomial2} \\
		&\quad + \Big\{ \E{ H(\cU_{n,N_n} ) - H(\cU_{n,n}) } \Big\}. \label{E:LILBinomial3}
\end{align}
The term in \eqref{E:LILBinomial2} is $o_{a.s.}(\sqrt{n})$, see Proposition~\ref{P:LILBinomialError}. The term in \eqref{E:LILBinomial3} is $\cO(n^{1/4})$, see Proposition~\ref{P:LILBinomialError2}. The main term in \eqref{E:LILBinomial1} equals
\begin{align}
	&H( \cU_{n,N_n} ) - \E{ H( \cU_{n,N_n} ) } - \alpha(N_n - n)  = H(\cP_n) - \E{ H(\cP_n) } - \alpha( \# \cP_n - \E{ \# \cP_n} ) \nonumber \\
	&  =  \sum_{ z\in B_n} \E{ H(\cP_n) - H(\cP''_{n,z} ) - \alpha \ (\# \cP_n  - \# \cP''_{n,z} )  | \cF_{z} } \nonumber \\
	&=  \sum_{z\in B_n} \E{ \Delta(z,n) - \alpha \ \Delta'(z,n)  | \cF_{z} } \nonumber  \\
	&=  \sum_{z\in B_n} \E{ \Delta(z,\infty) - \alpha \ \Delta'(z,\infty)  | \cF_{z} } \label{E:LILBinomial4} \\
	& \quad + \Big\{ \sum_{z\in B_n} \E{ \Delta(z,n) - \Delta(z,\infty)  | \cF_{z} } \Big\} - \alpha \ \Big\{ \sum_{z\in B_n} \E{ \Delta'(z,n) - \Delta'(z,\infty)  | \cF_{z} } \Big\}. \label{E:LILBinomial5}
\end{align}
We show that the term in \eqref{E:LILBinomial4} satisfies the LIL, whereas the two terms in \eqref{E:LILBinomial5} are negligible. We begin with the remainder terms in \eqref{E:LILBinomial5}. The first term in \eqref{E:LILBinomial5} is of order $o_{a.s.}(\sqrt{n})$, this is demonstrated in Proposition~\ref{P:OrderRemainderPoisson}. Regarding the second term, we infer from Lemma~\ref{L:PoissonPointsRem} that it is $o_{a.s.}(\sqrt{n})$.

Finally, consider \eqref{E:LILBinomial4}. The LIL follows along the same lines as in the proof of Theorem~\ref{T:LILPoisson}, see also Remark~\ref{R:Binomial}; we omit the details. Hence,
\begin{align*}
    		& \limsup_{n\to\infty} \pm \frac{1}{\sqrt{ 2 n \log\log n} } \sum_{z\in B_n } \E{ \Delta(z,\infty) - \alpha \Delta'(z,\infty)  | \cF_z } \\
		&=  \EE{ \E{ \Delta(0,\infty) - \alpha \Delta'(0,\infty)  | \cF_0 }^2 }^{1/2} \quad a.s.
\end{align*}
We infer from Lemma~\ref{L:TauSigma} that  the $a.s.$-limit on the right-hand side equals $\tau^2=\sigma^2 - \alpha^2$ and is positive.
\end{proof}

\begin{lemma}\label{L:TauSigma}
Assume model~\ref{itm:model1} or model~\ref{itm:model2}. Then 
$$
	\tau^2 =  \EE{ \E{ \Delta(0,\infty) - \alpha \Delta'(0,\infty)  | \cF_0 }^2 }>0.
$$
\end{lemma}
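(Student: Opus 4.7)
The plan is to split the proof into two parts: first establish the identity $\mathbb{E}[\mathbb{E}[\Delta(0,\infty)-\alpha\Delta'(0,\infty)\,|\,\cF_0]^2]=\sigma^2-\alpha^2$ (which matches $\tau^2$ as defined in Theorem~\ref{T:LILBinomial}), and then conclude strict positivity from the non-degeneracy of $\fD_0(\cP,\infty)$.

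For the identity, I would set $U=\mathbb{E}[\Delta(0,\infty)\,|\,\cF_0]$ and $V=\mathbb{E}[\Delta'(0,\infty)\,|\,\cF_0]$ and expand
\[
\mathbb{E}[(U-\alpha V)^2]=\mathbb{E}[U^2]-2\alpha\,\mathbb{E}[UV]+\alpha^2\,\mathbb{E}[V^2]=\sigma^2-2\alpha\,\mathbb{E}[UV]+\alpha^2\,\mathbb{E}[V^2].
\]
Since $\cP(Q_0)\in\cF_0$ and $\cP'$ is independent of $\cF_0$ with $\mathbb{E}[\cP'(Q_0)]=|Q_0|=1$, we have $V=\cP(Q_0)-1$ and $\mathbb{E}[V^2]=\mathrm{Var}(\cP(Q_0))=1$. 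For the cross term, the involution that swaps $\cP\cap Q_0$ with $\cP'\cap Q_0$ preserves the joint distribution of $(\cP,\cP')$ but sends $\Delta(0,\infty)\mapsto-\Delta(0,\infty)$, so $\mathbb{E}[\Delta(0,\infty)]=0$; by the tower property, $\mathbb{E}[UV]=\mathbb{E}[\Delta(0,\infty)\,(\cP(Q_0)-1)]=\mathbb{E}[\Delta(0,\infty)\cP(Q_0)]$.

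I would then apply the Mecke (Palm) formula for the Poisson process (with $\cP'$ treated as exogenous) to obtain
\[
\mathbb{E}[\cP(Q_0)\Delta(0,\infty)]=\int_{Q_0}\mathbb{E}[\Delta^{(x)}(0,\infty)]\,dx,
\]
where $\Delta^{(x)}(0,\infty)$ denotes $\Delta(0,\infty)$ evaluated with $\cP$ replaced by the Palm version $\cP\cup\{x\}$. For $x\in Q_0$, the substitution leaves the $\cP''_0$ side unchanged since $(\cP\cup\{x\})\setminus Q_0=\cP\setminus Q_0$. Choosing a stabilization radius $S$ large enough to stabilize both the original difference at $0$ and the add-one cost at $x$ simultaneously (take the maximum of the two a.s. finite radii), I can split
\[
\Delta^{(x)}(0,\infty)=\fD_x(\cP,\infty)+\Delta(0,\infty)\quad a.s.
\]
By translation invariance of $H$, $\mathbb{E}[\fD_x(\cP,\infty)]=\alpha$ for every $x$, so $\mathbb{E}[\Delta^{(x)}(0,\infty)]=\alpha+0=\alpha$ and integrating over $x\in Q_0$ gives $\mathbb{E}[UV]=\alpha|Q_0|=\alpha$. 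Substituting, $\mathbb{E}[(U-\alpha V)^2]=\sigma^2-2\alpha^2+\alpha^2=\sigma^2-\alpha^2=\tau^2$, which is the claimed identity.

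For strict positivity, Cauchy-Schwarz gives $\alpha^2=(\mathbb{E}[UV])^2\le\mathbb{E}[U^2]\mathbb{E}[V^2]=\sigma^2$, with equality iff $U=\alpha V$ a.s., i.e., iff $\mathbb{E}[\Delta(0,\infty)\,|\,\cF_0]=\alpha(\cP(Q_0)-1)$ almost surely. I would rule out this equality under the non-degeneracy of $\fD_0(\cP,\infty)$ by restricting to the $\cF_0$-event $\{\cP(Q_0)=0\}$, which has positive probability $e^{-1}$. On this event the right-hand side equals the constant $-\alpha$, while $\Delta(0,\infty)=-[H((\cP\cap B(0,S))\cup(\cP'\cap Q_0))-H(\cP\cap B(0,S))]$; decomposing the conditional expectation by $k=\cP'(Q_0)$ and isolating the $k=1$ contribution, one obtains
\[
\mathbb{E}[\Delta(0,\infty)\,|\,\cF_0]\,\mathds{1}_{\{\cP(Q_0)=0\}}=-e^{-1}\int_{Q_0}\mathbb{E}[\fD_y(\cP,\infty)\,|\,\cF_0]\,dy+R(\cF_0),
\]
where $R(\cF_0)$ collects the multi-point contributions and is also $\cF_0$-measurable. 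The main obstacle is then to show that this quantity is not a.s.\ constant as $\cP|_{Q_{y^*}}$ is varied for some $y^*\prec 0$ lying in the stabilization ball of a typical $y\in Q_0$; this is the standard but delicate argument from the Penrose-Yukich positivity proof, exploiting that $\fD_0(\cP,\infty)$ depends non-trivially on $\cP$ inside its stabilization ball of positive random radius. Once this is established, the equality $U=\alpha V$ is violated on a set of positive probability, giving $\tau^2>0$.
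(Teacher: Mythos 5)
Your proof of the identity $\EE{\E{\Delta(0,\infty)-\alpha\Delta'(0,\infty)\,|\,\cF_0}^2}=\sigma^2-\alpha^2$ is correct but follows a genuinely different route from the paper. Both arguments reduce to the single key fact $\E{\Delta(0,\infty)\,\cP(Q_0)}=\alpha$ (after observing $\E{\Delta'(0,\infty)|\cF_0}=\cP(Q_0)-1$, $\EE{\E{\Delta'(0,\infty)|\cF_0}^2}=1$ and $\EE{\Delta(0,\infty)}=0$). You obtain it directly by the Mecke equation, writing $\E{\cP(Q_0)\Delta(0,\infty)}=\int_{Q_0}\E{\Delta^{(x)}(0,\infty)}\,\intd{x}$ and splitting the Palm version as $\fD_x(\cP,\infty)+\Delta(0,\infty)$ on a common stabilization radius; this is short, purely local, and only needs the integrability supplied by Lemma~\ref{L:Moments}. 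The paper instead proceeds indirectly: it uses the de-Poissonization decomposition \eqref{E:LILBinomial1}--\eqref{E:LILBinomial3}, the normal approximation of Proposition~\ref{P:NormalApproximation} together with uniform integrability of $(n^{-1}X_n^2)_n$ and $(n^{-1}Y_n^2)_n$ to identify $\lim_n n^{-1}\V{H(\cU_{n,n})}=\sigma^2-\alpha^2$, extracts $\lim_n n^{-1}\cov{H(\cU_{n,N_n})}{N_n}=\alpha$ from the variance expansion, and then computes this covariance limit through the martingale-difference representation to conclude $\E{\Delta(0,\infty)\cP(Q_0)}=\alpha$. Your Palm-calculus argument buys a much leaner proof that bypasses the CLT and the uniform-integrability checks; the paper's argument buys, as a by-product, the interpretation of $\tau^2$ as the limiting variance of the binomial functional, which it reuses elsewhere. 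On strict positivity the two treatments are essentially the same: the paper simply cites \cite{penrose2001central}, and you ultimately do too. Be aware, though, that your Cauchy--Schwarz reduction (ruling out $U=\alpha V$ a.s.\ by examining $\E{\Delta(0,\infty)|\cF_0}$ on $\{\cP(Q_0)=0\}$) is left explicitly unfinished at the step you call the main obstacle, and it is not literally the statement Penrose and Yukich prove; the clean resolution is to invoke their positivity result for $\sigma^2-\alpha^2$ directly, as the paper does, rather than route it through the nondegeneracy of that conditional expectation. Also note that once the identity is established, the weak inequality $\alpha^2\le\sigma^2$ is automatic from $\E{(U-\alpha V)^2}\ge 0$, so Cauchy--Schwarz adds nothing there.
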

\begin{proof}
If $\fD_0(\cP,\infty)$ is nondegenerate, $\tau^2$ is positive, see \cite{penrose2001central} (their proof applies to both models). It remains to verify the equality. We use the representation from \eqref{E:LILBinomial1} to \eqref{E:LILBinomial3} and set
\begin{align*}
		X_n &:= H( \cU_{n,n} ) - \E{ H(\cU_{n,n}) } \\
		Y_n &:= H( \cU_{n,N_n} ) - \E{ H( \cU_{n,N_n} ) } \\
		Z_n &:= \alpha(N_n - n).
\end{align*}
Then $(n^{-1/2} Y_n)_n$ converges to $\cN(0,\sigma^2)$ in distribution under model~\ref{itm:model1} and \ref{itm:model2} by Proposition~\ref{P:NormalApproximation} and Proposition~\ref{P:OrderRemainderPoisson}. Furthermore, $(n^{-1/2} Z_n)_n$ converges in distribution to $\cN(0,\alpha^2)$. Using the independence of $X_n$ and $Z_n$, we can infer with characteristic functions and the above approximation results that $n^{-1/2} X_n$ converges to $\cN(0,\sigma^2-\alpha^2)$, for a sketch see the proof of Theorem 2.1 in \cite{penrose2001central}. It remains to show the equality $\tau^2 = \EE{ \E{ \Delta(0,\infty) - \alpha \Delta'(0,\infty)  | \cF_0 }^2 }$.

To this end, we first show
\begin{align}
		\tau^2 \overset{\text{def}}{=} \sigma^2 - \alpha^2 &= \lim_{n\to\infty} n^{-1} \V{ H(\cU_{n,n}) } \label{E:TauSigma1} \\
		\begin{split}\label{E:TauSigma2}
		&= \lim_{n\to\infty} \big\{ n^{-1} \V{ H(\cU_{n,N_n} )} + \alpha^2 n^{-1} \V{ N_n } \\
		&\qquad\quad  - 2 \alpha \  n^{-1} \cov{ H(\cU_{n,N_n})}{ N_n} \big\}
		\end{split} \\
		&= \sigma^2 + \alpha^2 - 2 \alpha \ \lim_{n\to\infty} n^{-1} \cov{ H(\cU_{n,N_n})}{ N_n}. \label{E:TauSigma3}
\end{align}
The argument relies on the fact that uniform integrability of a sequence $(U_n)_n$ and its convergence in distribution to a limit $U$ imply convergence of the means, viz., $\E{U_n}\to \E{U}$ (using Skorokhod's representation theorem for instance). 

Obviously, the sequence $(n^{-1} Z_n^2 )_n$ is  u.i. Moreover, $(n^{-1} X_n^2)_n$ (resp.\ $(n^{-1} Y_n^2)_n$) converges in distribution to $\tau^2 Z_0$ (resp.\ $\sigma^2 Z_0$), where $Z_0$ has a standard normal distribution. Hence, it is enough to verify the uniform integrability of $( n^{-1} X_n^2 )_n$ and $( n^{-1} Y_n^2 )_n$ in order to conclude \eqref{E:TauSigma1} to \eqref{E:TauSigma3}. 
 
The sequence $(n^{-1} Y_n^2)_n$ is u.i. by the moment conditions given in Lemma~\ref{L:Moments} (take $p=4$). In order to verify that $ ( n^{-1} X_n^2 )_n$ is u.i., it is enough to use the representation given in \eqref{E:LILBinomial1} to \eqref{E:LILBinomial3} and show that the square of the remainder, which is given in \eqref{E:LILBinomial2} and \eqref{E:LILBinomial3}, when divided by $n$, is u.i.

Here, the arguments used in the proof of Proposition \ref{P:LILBinomialError} (see in particular \eqref{E:LILBinomialErrorDiff1}) and in Proposition~\ref{P:LILBinomialError2} (see in particular \eqref{E:LILBinomialErrorExp1a} and \eqref{E:LILBinomialErrorExp1b})  lead to the u.i. of $n^{-1}$ times the square of the term in \eqref{E:LILBinomial2}, we skip the details. Clearly, Proposition~\ref{P:LILBinomialError2} applies to $n^{-1}$ times the square of the term in \eqref{E:LILBinomial3}.
 This shows \eqref{E:TauSigma1} to \eqref{E:TauSigma3} and in particular $\lim_{n\to\infty}  n^{-1} \cov{ H(\cU_{n,N_n})}{ N_n} = \alpha$.

Moreover, the covariance can be expressed in terms of the above introduced martingale difference sequence (note that $\EE{ \Delta'(z,n) | \cF_z } = \cP_n( Q_z ) - \EE{ \cP_n( Q_z ) }$). We have
\begin{align*}
	&n^{-1}	\cov{ H(\cU_{n,N_n})}{ N_n} \\
	&= 	n^{-1}	 \EE{ \big( \sum_{z\in B_n} \E{ \Delta(z,n) | \cF_z } \big) \big( \sum_{y\in B_n} \cP_n(Q_y) - \E{  \cP_n( Q_y ) } \big) } \\
		&= 	n^{-1} \sum_{z\in B_n} 	 \E{  \Delta(z,n) \  \cP_n(Q_z) } \\
		&= 	n^{-1} \sum_{z\in B_n} 	\E{ \Delta(z,\infty) \ \cP(Q_z) } \\
		&\quad +n^{-1}  \sum_{z\in B_n}  \E{  \Delta(z,n)  \ \cP_n(Q_z) -  \Delta(z,\infty) \ \cP(Q_z) } \\
		&= 	\E{ \Delta(0,\infty) \ \cP(Q_0) } (1+o(1)) + o(1).
		\end{align*}
where the last equality follows because $\# B_n / n \to 1$ and
\begin{align*}	
			 n^{-1} \sum_{z\in B_n}  \E{  \Delta(z,n)  \cP_n(Q_z) -  \Delta(z,\infty) \cP(Q_z) } \rightarrow 0, \quad (n\to\infty),
\end{align*}		
 by an application of the H{\" o}lder and Cauchy-Schwarz inequality.
Hence, $\E{\fD_0(\cP,\infty)} \overset{\text{def}}{=} \alpha = \E{ \Delta(0,\infty) \cP(Q_0) } $.

Next, we use $\EE{ \Delta'(0,\infty) | \cF_0} = \cP(Q_0) - \E{ \cP(Q_0) }$ and $\EE{ \EE{ \Delta'(0,\infty) | \cF_0}^2 } = 1$. Consequently, we find with the definition of $\tau$
\begin{align*}
		 \E{ \E{ \Delta(0,\infty) - \alpha \Delta'(0,\infty) | \cF_0 }^2 } = \sigma^2 - 2 \alpha \E{ \Delta(0,\infty) \cP(Q_0) } + \alpha^2 = \sigma^2 - \alpha^2 = \tau^2 .
\end{align*}
This shows the desired relation.
\end{proof}

\subsection{The special case of a one-dimensional domain}
In this section, we consider the special case for the 1-dimensional stretched domain $\cD\times [0,n]$. Given the technical details for the full domain $(W_n)_n$, which stretches in each dimension, it is a routine to verify the LIL for the sequences $(H(\cQ_n)-\E{H(\cQ_n)})_n$. So, we skip the details of the proof of Theorem~\ref{T:1DLIL} here and focus on the SIP. In the proof, we continue to use the established notation for reasons of clarity, so we write $\cP_n, \cU_{n,N_n}$ and $\cU_{n,n}$ instead of $\cQ_n$.

\begin{proof}[Proof of Theorem~\ref{T:1DSIP}]
We prove the invariance principle with a standard technique which relies on the Skorokhod embedding for martingales, (see, e.g., \cite{hall1980martingale}). We begin with the Poisson process and introduce the following definitions.
\begin{align*}
		S_{e,n} &= \sum_{z = 0 }^{ n }  \E{ \Delta(z, n)  | \wt\cF_z } = H(\cP_n) - \E{ H(\cP_n) }, \qquad S_n = \sum_{z=1}^{n} F_z.
\end{align*}
Then $S_{e,n} =  S_n + o_{a.s.}((\log n)^{(3/2+\delta} n^{1/p})$ by Proposition~\ref{P:OrderRemainderPoisson} for each $\delta>0$, which is also valid in the case of the stretched domain with $d=1$. 

Using the Skorokhod martingale embedding, there is a standard Brownian motion $B$ and non-negative random variables $\tau_i$, $i\in\N_+$, such that $( S_n )_n$ equals $ (B(\sum_{i=1}^n \tau_i ))_n $ $a.s.$ and the $\tau_i$ satisfy
\[
		\EE{ \tau_i | \wt \cG_{i-1} } = \E{ F_i^2 | F_1,\ldots,F_{i-1} } \quad a.s.
\]
as well as $\EE{ \tau_i^r | \wt \cG_{i-1} } \le C_r \E{ F_i^{2r} | F_1,\ldots,F_{i-1} }$ $a.s.$ for all $r>1$ for a certain $C_r\in\R_+$ and where the $\sigma$-field $\wt \cG_i$ is generated by $(B(\sum_{k=1}^j \tau_k) : j\le i)$ and by the $B(t)$, $t\in [0,\sum_{k=1}^i \tau_k ]$. Also, $\sum_{k=1}^i \tau_k$ is $\wt \cG_i$-measurable. We show
\begin{align}\label{E:SIP1}
		B\Big(\sum_{i=1}^n \tau_i \Big) = B(n\sigma^2) + \cO_{a.s.}\big( n^{1/4} (\log n)^{1/2} (\log \log n)^{1/4} \big).
\end{align}
This shows then $ S_{e,n}$ equals $B( n \sigma^2 ) + \cO_{a.s.}( n^{1/4} (\log n)^{1/2} (\log \log n)^{1/4} )$. The desired approximation follows then from Lemma~\ref{L:BMContinuity}.
 In order to show \eqref{E:SIP1}, we rely on the classical decomposition
\begin{align}\begin{split}\label{E:SIP2}
		\sum_{i=1}^n \tau_i  - n \sigma^2 &= \sum_{i=1}^n ( \tau_i  - \mathbb{E}[ \tau_i | \wt\cG_{i-1} ] ) +  \sum_{z=1}^n ( \mathbb{E}[ F_z^2 | F_1,\ldots,F_{z-1} ] -  F_z^2 ) \\
		&\quad + \sum_{z=1}^n (F_z^2 - \sigma^2 ).
\end{split}\end{align}
The third term on the RHS of \eqref{E:SIP2} satisfies the LIL from Theorem~\ref{T:GeneralLIL}; note that we need in this step, the uniform bounded moments condition to be satisfied with $p$ at least equal to 8. The first and the second term on the RHS of \eqref{E:SIP2} satisfy the martingale LIL (\cite{stout1970hartman}). This yields $| \sum_{i=1}^n \tau_i - n\sigma^2 | \le C  n^{1/2} (\log \log n)^{1/2}  =: t_n $ $a.s.$, for a certain $C\in\R_+$. Since $t_n = o(n)$, we are in position to apply Lemma~\ref{L:BMContinuity}. More precisely,
\begin{align*}
		\Big| B\big( \sum_{i=1}^n \tau_i \big) - B(n\sigma^2 ) \Big | &\le \max_{k\le n} \sup_{x: |x-k\sigma^2 | \le t_n } |B(x) - B(k\sigma^2 )| \\
		&\le C\sqrt{ t_n \log( n t_n^{-1} ) } = \cO_{a.s.}( n^{1/4} (\log n)^{1/2} (\log\log n)^{1/4}).
\end{align*}
This shows \eqref{E:SIP1}. 

Regarding the binomial process, we can use the decomposition from \eqref{E:LILBinomial1} to \eqref{E:LILBinomial3} and the results from Proposition~\ref{P:LILBinomialError} as well as Proposition~\ref{P:LILBinomialError2}. So that
\begin{align*}
		H( \cU_{n,n} ) - \E{ H(\cU_{n,n}) } &= H( \cU_{n,N_n} ) - \E{ H( \cU_{n,N_n} ) } - \alpha(N_n - n) \\
		&\quad + \cO_{a.s.}( n^{1/4 + 1/p} (\log n)^2 ) + \E{H(\cU_{n,N_n})  - H(\cU_{n,n}) },
\end{align*}
where $\E{H(\cU_{n,N_n})  - H(\cU_{n,n}) } = \cO(n^{1/4})$ and where the term $H( \cU_{n,N_n} ) - \E{ H( \cU_{n,N_n} ) } - \alpha(N_n - n) $ equals $\sum_{z=0}^n F'_z = \sum_{z=0}^{n} \E{ \Delta(z,\infty) - \alpha \ \Delta'(z,\infty)  | \cF_{z} }$ plus an error, which is of order $o_{a.s.}( (\log n)^2 n^{1/p} )$ by Proposition~\ref{P:OrderRemainderPoisson} and Lemma~\ref{L:PoissonPointsRem}. Repeating the above reasoning with $\sum_{i=1}^n F'_z$ instead completes the proof in the binomial case.
\end{proof}

\subsection{Verification of the examples}\label{Section_VerificationExample}

\begin{proof}[Proof of Example~\ref{Ex:EC}]
Let $r$ be an upper bound on the diameter of a simplex in $\cK$. Then $S^*$ is at most $r$. Consider a point $z\in\R^d$ and a point cloud $P$. Let $m = \#( P\cap B(z,S^*))$. Then the Euler characteristic computed from these $m$ points is at most $\sum_{k=0}^{m-1} \binom{m}{k+1}$. Lemma~\ref{L:MomentsEuler} shows that this is at most $C^* m^{1/2} 2^m$ for a certain $C^*\in\R_+$. In particular, the Euler characteristic satisfies model~\ref{itm:model1}.
\end{proof}

\begin{proof}[Proof of Example~\ref{Ex:kNN}]
For simplicity, we only consider the case for 2 dimensions, see also \cite{penrose2001central} for generalizations to higher dimensions or for the 1-dimensional case. We show that $\p( \wt S > r )\le c_1 \exp(-c_2 r )$ for two constants $c_1,c_2\in\R_+$ whenever $r\in (0, n^{1/d}/2 )$. It suffices to consider the case where $x=0$. The (number of) points inside $Q_0$, $V_1,\ldots,V_k$ is also not the main difficulty, as we will see. Depending on the parameters $y,n$, we need to distinguish two cases. 

\textit{Case 1. $B(0,r)\subset \cC_{y,n}$}. We follow the ideas of \cite{penrose2001central} and construct a neighborhood of $0$ from 6 disjoint equilateral triangles $T_j$, $j\in\{1,\ldots,6\}$, with edge length $r/4$ as in the left panel of Figure~\ref{fig:kNN}. Then each $T_j$ has Lebesgue measure $\sqrt{3} r^2 / 16 = \lambda(r)$. We have
\begin{align*}
			&\p( T_j \text{ contains at most $k$ points of } \cP \text{ for one } j\in\{1,\ldots,6\} ) \\
			&\le 6 \p( T_1 \text{ contains at most $k$ points of } \cP ) \\
			&\le 6 (k+1) (1+\lambda(r))\exp(-\lambda(r)) \le c_1 \exp( -c_2 r),
\end{align*}
for certain constants $c_1,c_2\in\R_+$. One finds that given each $T_j$ contains at least $k+1$ points, the radius of stabilization is at most $4 (r/4) = r$. We omit the details here because they will be explained in detail in the next case.

\textit{Case 2. $B(0,r)\not\subset \cC_{y,n}$}. In this case, $B(0,r)$ either intersects with one or with two edges because $r< 2^{-1} n^{1/d}$, see the middle and the right panel of Figure~\ref{fig:kNN}. We use again the 6 disjoint equilateral triangles with edge length $r/4$. This time, we divide each equilateral triangle $T_j$ in two disjoint isosceles triangles. Then the Lebesgue measure of each isosceles triangle is $\lambda(r)/2$. 

We begin with the situation in the middle panel and consider the 6 isosceles triangles with edge length $r/4$ in the lower half (given in brown in the figure), $T'_1, \ldots,T'_6$, say. Note that in general the blue and green triangles in the upper half can intersect with $\R^d \setminus \cC_{y,n}$ depending on the distance 0 to the boundary of  $\cC_{y,n}$.

Define the event
\[
	A= \{T'_i \setminus Q_0 \text{ contains at least $k+1$ points for $i\in\{1,\ldots,6\}$} \ \}
\]
Then, $\p( A^c ) \le c_1 \exp(-c_2 r)$ for certain constants $c_1,c_2\in\R_+$. We show that given the event $A$, the radius of stabilization is at most $9r$. 

Plainly, given $A$, 0 has $(k+1)$ points within distance $r/4$ from $\cP|_{\cC_{y,n}}$ because all 6 isosceles triangles are entirely contained within $\cC_{y,n}$ and each of these triangles contains at least $(k+1)$ points.

 Conversely, given $A$, let $0$ be a kNN of a point $z\in ([\cP|_{\cC_{n,y}}\setminus Q_0] \cup \{V_1,\ldots,V_k\}) $. 
If the shortest path of $z$ to 0 intersects with one of the two equilateral triangles at the top (in green), then $z$ lies in the induced convex cone within a distance of $2r$ (otherwise $z$ lies outside of $
\cC_{n,y}$, one can show this with elementary calculations).
 
Otherwise (if the path does not intersect with the green triangles), the path must pass through one of the six lower isosceles triangles (in brown) or through one of the two neighboring isosceles triangles (in blue). By construction each of the six lower isosceles triangles contains $(k+1)$ points. Write $C$ for the two blue equilateral triangles.

Then it holds: If $z$ is not contained in $\cup_{i} T'_{i} \cup C $ and if the shortest path of $z$ does not pass through the two top green triangles, then $0$ is no kNN of $z$. Indeed, if the path of $z$ passes through $C$ but $z\notin C$, then $z$ lies closer to $(k+1)$ points in a brown adjacent triangle than to 0. If the path of $z$ passes through one of the $T'_i$ but $z\notin T'_i$, then the $(k+1)$ points from this $T'_i$ lie closer to $z$ than 0.

Hence, if $0$ is a kNN of $z$, then either $z\in \cup_{i} T'_{i} \cup C$ or the shortest path of $z$ to 0  passes through the two top triangles and $z$ lies in $B(0,2r)$. So in both cases $z \in B(0,2r)$ and has $(k+1)$ points in distance $2r+r/4$.

Next, let $\{z,y\}$ be an edge which is removed from the graph due to the additional point $0$, w.l.o.g., let $0$ be a kNN of $z$, so that $y$ is a $(k+1)$-nearest neighbor of $z$ when adding 0. Then $y \in B(0,2(2r+r/4) )$. If additionally $z$ is not a kNN of $y$, then the edge $\{z,y\}$ is removed. Otherwise, $z$ is a kNN of $y$ and $y$ has $k$ points within distance $2(2r+r/4)$. Consequently, the configuration of the points in $\cC_{n,y}$ but outside of $B(0,4(2r+r/4))$ is irrelevant for the choice of whether an edge $\{z,y\}$ is removed from the graph. This completes the case for the situation in the middle panel of Figure~\ref{fig:kNN} and the radius of stabilization is at most $9r$. 

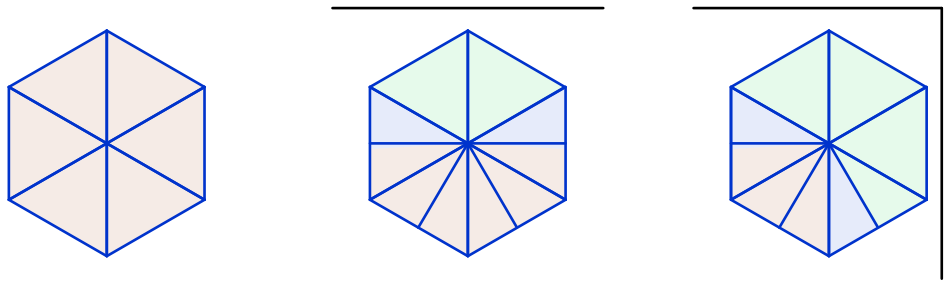
\begin{figure}
\centering
\begin{tikzpicture}[line cap=round,line join=round,>=triangle 45,x=.3cm,y=.3cm]
\clip(-2.8,-.66) rectangle (50,16.25);
\fill[line width=1pt,color=redCol,fill=blueCol,fill opacity=0.1] (1.66,10.5) -- (6,8) -- (6,13) -- cycle;
\fill[line width=1pt,color=redCol,fill=blueCol,fill opacity=0.1] (6,13) -- (6,8) -- (10.33,10.5) -- cycle;
\fill[line width=1pt,color=redCol,fill=blueCol,fill opacity=0.1] (10.33,10.5) -- (6,8) -- (10.33,5.5) -- cycle;
\fill[line width=1pt,color=redCol,fill=blueCol,fill opacity=0.1] (10.33,5.5) -- (6,8) -- (6,3) -- cycle;
\fill[line width=1pt,color=redCol,fill=blueCol,fill opacity=0.1] (6,3) -- (6,8) -- (1.66,5.5) -- cycle;
\fill[line width=1pt,color=redCol,fill=blueCol,fill opacity=0.1] (1.66,10.5) -- (6,8) -- (1.66,5.5) -- cycle;

\fill[line width=1pt,color=redCol,fill=blueCol,fill opacity=0.1] (22,8) -- (26.33,5.5) -- (26.33,7.75) -- cycle;
\fill[line width=1pt,color=redCol,fill=greenCol,fill opacity=0.1] (17.66,10.5) -- (22,8) -- (22,13) -- cycle;
\fill[line width=1pt,color=redCol,fill=blueCol,fill opacity=0.1] (17.66,5.5) -- (22,3) -- (22,8.0) -- cycle;
\fill[line width=1pt,color=redCol,fill=blueCol,fill opacity=0.1] (22,8) -- (22,3.0) -- (26.33,5.5) -- cycle;
\fill[line width=1pt,color=redCol,fill=greenCol,fill opacity=0.1] (22,13.0) -- (26.33,10.5) -- (22,8) -- cycle;
\fill[line width=1pt,color=redCol,fill=blueCol,fill opacity=0.1] (17.66,7.75) -- (22,8.0) -- (17.66,5.5) -- cycle;

\fill[line width=1pt,color=redCol,fill=redCol,fill opacity=0.1] (17.66,10.5) -- (22,8.0) -- (17.66,7.75) -- cycle;
\fill[line width=1pt,color=redCol,fill=redCol,fill opacity=0.1] (22,8) -- (26.33,7.75) -- (26.33,10.5) -- cycle;

\fill[line width=1pt,color=redCol,fill=greenCol,fill opacity=0.1] (38,8) -- (42.33,5.5) -- (42.33,10.5) -- cycle;
\fill[line width=1pt,color=redCol,fill=greenCol,fill opacity=0.1] (33.66,10.5) -- (38,8) -- (38,13) -- cycle;
\fill[line width=1pt,color=redCol,fill=blueCol,fill opacity=0.1] (33.66,5.5) -- (38,3) -- (38,8.0) -- cycle;

\fill[line width=1pt,color=redCol,fill=greenCol,fill opacity=0.1] (38,8) -- (40.15,4.31) -- (42.33,5.5) -- cycle;
\fill[line width=1pt,color=redCol,fill=redCol,fill opacity=0.1] (38,8) -- (38,3.0) -- (40.15,4.31) -- cycle;

\fill[line width=1pt,color=redCol,fill=greenCol,fill opacity=0.1] (38,13.0) -- (42.33,10.5) -- (38,8) -- cycle;
\fill[line width=1pt,color=redCol,fill=redCol,fill opacity=0.1] (33.66,10.5) -- (38,8.0) -- (33.66,7.75) -- cycle;
\fill[line width=1pt,color=redCol,fill=blueCol,fill opacity=0.1] (33.66,7.75) -- (38,8.0) -- (33.66,5.5) -- cycle;

\draw [line width=1pt,color=redCol] (1.66,10.5)-- (6,8);
\draw [line width=1pt,color=redCol] (6,8)-- (6,13);
\draw [line width=1pt,color=redCol] (6,13)-- (1.66,10.5);
\draw [line width=1pt,color=redCol] (6,13)-- (6,8);
\draw [line width=1pt,color=redCol] (6,8)-- (10.33,10.5);
\draw [line width=1pt,color=redCol] (10.33,10.5)-- (6,13);
\draw [line width=1pt,color=redCol] (10.33,10.5)-- (6,8);
\draw [line width=1pt,color=redCol] (6,8)-- (10.33,5.5);
\draw [line width=1pt,color=redCol] (10.33,5.5)-- (10.33,10.5);
\draw [line width=1pt,color=redCol] (10.33,5.5)-- (6,8);
\draw [line width=1pt,color=redCol] (6,8)-- (6,3);
\draw [line width=1pt,color=redCol] (6,3)-- (10.33,5.5);
\draw [line width=1pt,color=redCol] (6,3)-- (6,8);
\draw [line width=1pt,color=redCol] (6,8)-- (1.66,5.5);
\draw [line width=1pt,color=redCol] (1.66,5.5)-- (6,3);
\draw [line width=1pt,color=redCol] (1.66,10.5)-- (6,8);
\draw [line width=1pt,color=redCol] (6,8)-- (1.66,5.5);
\draw [line width=1pt,color=redCol] (1.66,5.5)-- (1.66,10.5);
\draw [line width=1pt,color=redCol] (22,8.0)-- (26.33,5.5);
\draw [line width=1pt,color=redCol] (26.33,5.5)-- (26.33,10.5);
\draw [line width=1pt,color=redCol] (26.33,10.5)-- (22,8);
\draw [line width=1pt,color=redCol] (17.66,10.5)-- (22,8.0);
\draw [line width=1pt,color=redCol] (22,8)-- (22,13);
\draw [line width=1pt,color=redCol] (22,13)-- (17.66,10.5);
\draw [line width=1pt,color=redCol] (17.66,5.5)-- (22,3.0);
\draw [line width=1pt,color=redCol] (22,3.0)-- (22,8.0);
\draw [line width=1pt,color=redCol] (22,8.0)-- (17.66,5.5);
\draw [line width=1pt,color=redCol] (22,8.0)-- (22,3.0);
\draw [line width=1pt,color=redCol] (22,3)-- (26.33,5.5);
\draw [line width=1pt,color=redCol] (26.33,5.5)-- (22,8);
\draw [line width=1pt,color=redCol] (22,13)-- (26.3,10.5);
\draw [line width=1pt,color=redCol] (26.3,10.5)-- (22,8);
\draw [line width=1pt,color=redCol] (22,8)-- (22,13);
\draw [line width=1pt,color=redCol] (17.66,10.5)-- (22,8);
\draw [line width=1pt,color=redCol] (22,8)-- (17.66,5.5);
\draw [line width=1pt,color=redCol] (17.66,5.50)-- (17.66,10.5);
\draw [line width=1pt,color=redCol] (22,8)-- (26.33,8);
\draw [line width=1pt,color=redCol] (17.66,8)-- (22,8);
\draw [line width=1pt,color=redCol] (22,8)-- (24.15,4.31);
\draw [line width=1pt,color=redCol] (22,8)-- (19.80,4.26);
\draw [line width=1pt] (16,14)-- (28,14);
\draw [line width=1pt,color=redCol] (33.66,5.5)-- (33.66,10.5);
\draw [line width=1pt,color=redCol] (38,8.0)-- (42.33,5.5);
\draw [line width=1pt,color=redCol] (42.33,5.5)-- (42.33,10.5);
\draw [line width=1pt,color=redCol] (42.33,10.5)-- (38,8);
\draw [line width=1pt,color=redCol] (33.66,10.5)-- (38,8.0);
\draw [line width=1pt,color=redCol] (38,8)-- (38,13);
\draw [line width=1pt,color=redCol] (38,13)-- (33.66,10.5);
\draw [line width=1pt,color=redCol] (33.66,5.5)-- (38,3.0);
\draw [line width=1pt,color=redCol] (38,3.0)-- (38,8.0);
\draw [line width=1pt,color=redCol] (38,8.0)-- (33.66,5.5);
\draw [line width=1pt,color=redCol] (38,8.0)-- (38,3.0);
\draw [line width=1pt,color=redCol] (38,3)-- (42.33,5.5);
\draw [line width=1pt,color=redCol] (42.33,5.5)-- (38,8);
\draw [line width=1pt,color=redCol] (38,13)-- (42.3,10.5);
\draw [line width=1pt,color=redCol] (42.3,10.5)-- (38,8);
\draw [line width=1pt,color=redCol] (38,8)-- (38,13);
\draw [line width=1pt,color=redCol] (33.66,10.5)-- (38,8);
\draw [line width=1pt,color=redCol] (38,8)-- (33.66,5.5);
\draw [line width=1pt,color=redCol] (33.66,5.50)-- (33.66,10.5);
\draw [line width=1pt,color=redCol] (33.66,8)-- (38,8);
\draw [line width=1pt,color=redCol] (38,8)-- (40.15,4.31);
\draw [line width=1pt,color=redCol] (38,8)-- (35.80,4.26);
\draw [line width=1pt] (32,14)-- (43,14);
\draw [line width=1pt] (43,14)-- (43,2);
\end{tikzpicture}
\caption{In the left panel $B(0,r)$ is contained in $\cC_{y,n}$, in the middle and right panel $B(0,r)$ is not contained in $\cC_{y,n}$. In the left panel, we construct a neighborhood with equilateral triangles of edge length $r/4$. In the middle and right panel, we use additionally a refinement by dividing some equilateral triangles in isosceles triangles.}
\label{fig:kNN}
\end{figure}
The arguments are very similar for the right panel. Consider the three brown isosceles triangles with edge length $r/4$, $T''_i$, say. 

Once more, set $A = \{ T''_i\setminus Q_0 \text{ contains at least $k+1$ points for } i\in \{1,\ldots,3\} \}$. Then $\p(A)$ is at least $1-c_1 \exp(-c_2 r)$, so that we can restrict our considerations conditional on the event $A$. Clearly, 0 contains $(k+1)$ points within distance $r/4$. Moreover, arguing as before if 0 is a kNN of a $z$, then $z$ lies within a distance of $m r$, for some multiple $m\in\R_+$ and there are $(k+1)$ points within distance of $mr+r/4$.

Furthermore, let $\{z,y\}$ be an edge which is removed, and let $0$ be a kNN of $z$. Then $y\in B(0,2(mr+r/4))$. If additionally $z$ is not a kNN of $y$, then $\{z,y\}$ is removed; otherwise $y$ has $k$ points within distance $2(mr+r/4)$. So, the configuration of the points in $\cC_{y,n}$ but not in $B(0,2(mr+r/4))$ is irrelevant for the choice of whether an edge $\{z,y\}$ is removed from the graph. Consequently, in the situation in the right panel of Figure~\ref{fig:kNN}, the radius of stabilization is at most $(m+1)r$. 
\end{proof}

\addcontentsline{toc}{section}{Acknowledgments}
\textbf{Acknowledgements.} I thank the anonymous referee for her/his helpful comments that improved the manuscript.

\addcontentsline{toc}{section}{References}

\newpage
\appendix
\section{Appendix}\label{Appendix}

\subsection{General results}\label{A:General}

\begin{lemma}\label{L:MomentsEuler}
Let $m \in \N$. There is a constant $C \in\R_+$, which does not depend on $m$, such that
\[
		\binom{m}{k} = \frac{m!}{k!(m-k)!} \le C  \frac{2^m}{m^{1/2} },  \quad \forall k\in\{0,\ldots,m \}.
\]
Moreover, if $\lambda\in\R_+$ and $X \sim \poi(\lambda)$, then $\EE{ |\sum_{k=0}^X \binom{X}{k} |^q + e^{q X} }< \infty$ for all $q\in\R_+$. 
\end{lemma}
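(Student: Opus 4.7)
The plan is to handle the two claims separately; both are short and rely only on classical facts.

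For the first claim, I would observe that the binomial coefficients $\binom{m}{k}$, $0\le k\le m$, are unimodal in $k$ and attain their maximum at $k=\lfloor m/2\rfloor$. Hence it suffices to bound the central binomial coefficient. I would then apply Stirling's formula, which gives
\[
\binom{m}{\lfloor m/2\rfloor} = \frac{m!}{\lfloor m/2\rfloor!\,\lceil m/2\rceil!} \sim \sqrt{\frac{2}{\pi m}}\; 2^m\qquad(m\to\infty),
\]
so that there is an absolute constant $C\in\R_+$ with $\binom{m}{\lfloor m/2\rfloor}\le C\,2^m/\sqrt{m}$ for every $m\ge 1$ (the small values of $m$ are handled by enlarging $C$ if necessary). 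This yields the uniform bound $\binom{m}{k}\le C\,2^m/\sqrt{m}$ claimed.

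For the second claim, the key observation is the binomial identity $\sum_{k=0}^{X}\binom{X}{k}=2^{X}$, which is deterministic. Consequently,
\[
\Bigl|\sum_{k=0}^X\binom{X}{k}\Bigr|^q + e^{qX} \;=\; 2^{qX} + e^{qX} \;=\; e^{(q\log 2)X} + e^{qX}.
\]
The moment generating function of $X\sim\poi(\lambda)$ is $\E[e^{tX}]=\exp(\lambda(e^{t}-1))$, which is finite for \emph{every} real $t$. Applying this with $t=q\log 2$ and $t=q$ yields
\[
\E\!\left[2^{qX}+e^{qX}\right] = \exp(\lambda(2^{q}-1))+\exp(\lambda(e^{q}-1)) < \infty,
\]
which proves the second assertion.

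I expect no real obstacle here: the only mild subtlety is that the first bound must hold uniformly in $m$ rather than only asymptotically, but this is settled by absorbing small-$m$ cases into the constant after invoking Stirling for large $m$. The second part is essentially the statement that Poisson random variables have moment generating functions that are finite on the whole real line.
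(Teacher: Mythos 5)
Your proposal is correct and follows essentially the same route as the paper: both bound the central binomial coefficient via Stirling's formula (the paper uses explicit two-sided Stirling bounds where you use the asymptotic and absorb small $m$ into the constant) and both reduce the second claim to the finiteness of the Poisson moment generating function $\E{e^{tX}}=\exp(\lambda(e^t-1))$ for all real $t$. No issues.
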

\begin{proof}
The result relies on Stirling's formula
\[
		\sqrt{2\pi} n^{n+1/2} e^{-n} \le n! < e n^{n+1/2} e^{-n} \text{ for } n\in\N_+.
\]		
It is well known that the binomial coefficient is maximal at $m/2$ if $m$ is even and at $(m+1)/2$ if $m$ is odd. Thus, if $m$ is even,
\begin{align*}
		 \frac{m!}{k!(m-k)!} &\le  \frac{m!}{((m/2)!)^2} \le \frac{m! \ 2^{m+1} }{2\pi \ m^{m+1} \ e^{-m} } \le \frac{e \ 2^{m} }{\pi \ m^{1/2} }.
\end{align*}
A similar result is valid if $m$ is odd,
\begin{align*}
		 \frac{m!}{k!(m-k)!} &\le  \frac{m!}{ (\frac{m+1}{2})! (\frac{m-1}{2})!  } \le \frac{e^{3/2} \ 2^{m} }{\pi \ m^{1/2} + o(1)  }.
\end{align*}
 The claim regarding the moments follows immediately because $\EE{ e^{\delta X } } = \exp( \lambda(e^\delta -1 ) )$ is finite for all $\delta< \infty$. This completes the proof.
\end{proof}

\begin{lemma}[Uniform bounded moments]\label{L:Moments} Let $\rho \in (0,1/2)$.
\begin{itemize}
\item [(a)] There is a constant $C$, which only depends on the dimension $d$, such that for all $y\in\R^d$ and $r\in\R_+$
\begin{align}\begin{split}\label{E:Moments1}
		&\sup_{m\in [n-n^{1/2 + \rho }, n+n^{1/2 + \rho}] } \p( \cU_{n,m} \cap B(y,r)  \neq \cP \cap B(y,r) \ | \ \cU_{n,m} \cap B(y,r)  ) \\
		&\le C \Big(\frac{r^d}{ n}\Big)^{1/2 - \rho}
\end{split}\end{align}
with probability one.

\item [(b)] Assume model~\ref{itm:model1}. The uniform bounded moments condition \eqref{C:UniformBoundedMoments} is also satisfied for model~\ref{itm:model1} for any positive exponent. Moreover, let $p\in\N$, then
\begin{align}
\begin{split}\label{E:Moments2a}
		&\sup_{n\in\N} \sup_{y\in B_n} \sup_{m\in [n-n^{1/2 + \rho},n+n^{1/2 + \rho}]} \mathbb{E}\Big[ \Big| H( (\cU_{n,m} \cap B(y,S^*)) \cup \{y\} ) \\
		&\qquad\qquad\qquad\qquad\qquad\qquad\qquad - H(\cU_{n,m} \cap B(y,S^*) ) \Big|^p \Big] < \infty, \end{split} \\ 
			&	\E{ \Big| H( (\cP \cap B(0,S^*)) \cup \{0\} ) - H( \cP \cap B(0,S^*) ) \Big|^p }< \infty, \label{E:Moments2b} \\
				&\sup_{n\in\N} \max_{z\in B''_n} \E{ |\Delta(z,n)|^p } + \E{ | \Delta(z,\infty)| ^{ p} } < \infty. \label{E:Moments2c}
\end{align}

\item [(c)] Assume model~\ref{itm:model2}. Let $p\in\R_+$ be as in Condition~\eqref{C:UniformBoundedMoments} for the Poisson sampling scheme and let $S_0$ be given by \eqref{D:S0}. Then
\begin{align}
	&\sup_{n\in\N, z\in B_n}	\E{ | H(\cP_n) - H(\cP''_{n,z} ) |^{p} } < \infty, \label{E:Moments3a} \\
	&\sup_{n\in\N, z\in B_n, r\in\R_+} \E{ |H(\cP_n \cap B(z,r) )  - H( \cP''_{n,z} \cap B(z,r) ) |^{p} } < \infty, \label{E:Moments3b} \\
		&\sup_{n\in\N, z\in B_n, r\in\R_+} \E{ |  H(\cP\cap B(z,r)) - H(\cP''_z\cap B(z,r)) |^{p} } < \infty, \label{E:Moments3c}\\
				&\sup_{n\in\N, z\in B_n} \E{ | H( (\cP - z) \cap B(0,S_0)) - H( (\cP''_z - z ) \cap B(0,S_0))|^{p} } < \infty. \label{E:Moments3d}
\end{align}
\end{itemize}
\end{lemma}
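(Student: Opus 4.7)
The plan is to handle the three parts with different techniques, each tailored to the structure being exploited.

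For part (a), I would exploit the explicit coupling \eqref{E:CouplingBinom}. On $W_n$ the symmetric difference of $\cU_{n,m}$ and $\cP$ is supported on at most $|N_n - m|$ points which, conditional on $N_n$, are i.i.d.\ uniform on $W_n$ — these are either the leftover Poisson points $X_{m+1},\ldots,X_{N_n}$ (if $m\le N_n$) or the added binomial points $V_{n,1},\ldots,V_{n,m-N_n}$ (otherwise). Hence the number of them landing in $B(y,r)$ is conditionally stochastically dominated by a Binomial variable with parameters $|N_n-m|$ and $|B(y,r)|/n$. Combining Markov's inequality with the moment bound $\E{|N_n-m|^q}\le C_q n^{q/2+q\rho}$ (valid for $|n-m|\le n^{1/2+\rho}$ since $N_n$ is Poisson with mean $n$), and interpolating between the trivial bound $1$ and the first-moment bound via H\"older at an exponent calibrated to $\rho$, would yield the claimed a.s.\ bound $C(r^d/n)^{1/2-\rho}$. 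The conditioning on $\cU_{n,m}\cap B(y,r)$ would be handled by noting that, given this observation, the residual excess-point count in $B(y,r)$ is still governed by the above conditional law up to controlled factors coming from the product structure of the coupling.

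For part (b), both \eqref{E:Moments2a} and \eqref{E:Moments2b} would reduce to the growth bound \eqref{C:GrowthM1}: since the radius of stabilization is a.s.\ bounded by $S^*$, the add-one cost is dominated by $2C^*\exp(qN(y))$ for a count $N(y)$ in a fixed neighborhood of $Q_y$. This count is $\poi(\lambda)$ with bounded $\lambda$ under $\cP$, and under $\cU_{n,m}$ with $m\in[n-n^{1/2+\rho}, n+n^{1/2+\rho}]$ it is stochastically dominated by a Binomial variable with uniformly bounded mean; both have all exponential moments uniformly, which would prove \eqref{C:UniformBoundedMoments} for every $p$ in model~\ref{itm:model1}. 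Claim \eqref{E:Moments2c} would then follow by the triangle inequality $|\Delta(z,n)|\le |H(\cP_n\cap B(z,S^*))|+|H(\cP''_{n,z}\cap B(z,S^*))|$ and the same growth estimate on each side.

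For part (c), my approach would be a layer-cake decomposition driven by the exponential stabilization \eqref{C:ExpStabilization}. I would write $|H(\cP_n) - H(\cP''_{n,z})|$ as a telescoping sum indexed by the annular events $\{S_z \in (k-1, k]\}$ for $k\in\N$; each term can be bounded using the polynomial growth $|H(P)|\le\nu(\diam(P)+\#P)^\nu$ together with the Poisson moment estimate $\E{\cP(B(z,k))^{\nu'}}\le Ck^{\nu' d}$, and then paired via H\"older's inequality with the exponential tail $\p(S_z>k)\le c_1 e^{-c_2 k}$ to obtain a summable series. The baseline moment in the regime where $S_z$ stays bounded is supplied by the uniform bounded moments condition \eqref{C:UniformBoundedMoments}. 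The restricted versions \eqref{E:Moments3b}, \eqref{E:Moments3c}, \eqref{E:Moments3d} follow from the same scheme, since on $\{S_z\le r\}$ the two restrictions to $B(z,r)$ agree with the relevant full processes by the defining property of the radius of stabilization, while on $\{S_z > r\}$ the polynomial growth is dominated by the exponential tail.

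The main obstacle will be the uniformity in all parameters in part (c), which requires pairing the polynomial growth against the exponential stabilization tail via H\"older at an exponent precisely calibrated to the moment condition \eqref{C:UniformBoundedMoments} ($p>2d\vee 4$ in the Poisson regime, $p>2d\vee 6$ in the binomial regime). A subsidiary technical point is the conditioning step in part (a), where one must verify that conditioning on $\cU_{n,m}\cap B(y,r)$ does not disrupt the Markov-style domination for the excess-point count.
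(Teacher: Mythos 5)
Your parts (a) and (b) follow the paper's own argument. For (a) the paper uses exactly the first-moment estimate coming from the coupling \eqref{E:CouplingBinom}: the two processes differ on $B(y,r)$ in at most $|N_n-m|$ i.i.d.\ uniform points, so the conditional probability is at most $Cr^d n^{-1}\E{|N_n-m|}\le Cr^d n^{-(1/2-\rho)}$; it does not carry out the H\"older interpolation you propose, nor does it dwell on the conditioning subtlety you flag. For (b) the reduction of everything to $\E{e^{\delta N}}$ for a Poisson or binomial count $N$ with uniformly bounded mean, via the hard threshold $S^*$ and the growth bound \eqref{C:GrowthM1}, is precisely what the paper does.

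Part (c) is where you genuinely diverge, and where there is a gap. Your plan is to control $H(\cP_n)-H(\cP''_{n,z})$ by a single layer-cake over the events $\{S_z\in(k-1,k]\}$, pairing the polynomial growth of $H$ against the exponential tail of the radius. This works for \eqref{E:Moments3d} (and is exactly the paper's argument there) and also for \eqref{E:Moments3c}, because in those statements only the unrestricted processes $\cP$, $\cP''_z$ appear: on $\{S_z\le k\}$ the identity \eqref{D:S0} localizes the difference to $B(z,k)$, so the polynomial bound is in $k$, not in $n$. But in \eqref{E:Moments3a} and \eqref{E:Moments3b} the processes are restricted to the window, $\cP_n=\cP\cap W_n$, and for a cube $Q_z$ adjacent to $\partial W_n$ the event $\{B(z,S_z)\not\subseteq W_n\}$ has probability bounded away from $0$. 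On that event $\cP_n$ is not of the form $[\cP\cap B(z,S_z)]\cup A$ with $A\subset\R^d\setminus B(z,S_z)$, so \eqref{D:S0} does not localize the difference, and the only remaining bound is the global one $|H(\cP_n)|\le\nu(\diam(W_n)+\#\cP_n)^\nu\sim n^{\nu}$; your scheme then yields a bound of order $n^{\nu p}$ for such $z$, while \eqref{E:Moments3a} takes the supremum over all $z\in B_n$ including these boundary cubes. The paper avoids this by a different route: it telescopes $H(\cP_n)-H(\cP''_{n,z})$ over the individual points of $Q_z$ being removed and added, conditions on the (Poisson) point counts so that each increment becomes an add-one cost of a binomial process on a set of the class $\cB$ --- which contains the boundary-restricted sets $(W_n+z)\cap B(z,w)$ precisely for this purpose --- and applies the uniform bounded moments condition \eqref{C:UniformBoundedMoments} to each increment; this is the de-Poissonization of Lemmas 3.1 and 4.1 in \cite{penrose2001central} that the paper cites. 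If you prefer to keep a stabilization-radius argument near the boundary, you must work increment by increment with the window-adapted radii $\wt S(y,n,k,x)$ of \eqref{E:GeneralStabilization}, whose tails \eqref{C:ExpStabilization} are uniform over boundary positions; the single radius $S_z$ of \eqref{D:S0} is not enough.
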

\begin{proof}
We begin with (a). Let $N_n$ be the number of points of $\cP$ in $B_n$. Then on $B(y,r)$ the point processes can differ in at most $|N_n - m|$ i.i.d.\ points, each of these points falls in $B(y,r)$ with a probability of order $r^d n^{-1}$. There is a constant $C$, which only depends on $d$, such that the left-hand side of \eqref{E:Moments1} is less than
\begin{align*}
	\frac{C r^d}{n} \E{ |N_n - m | } \le \frac{C r^d}{n} \Big( \E{ |N_n - n| } + |n-m| \Big) \le C r^d (n^{-1/2} + n^{-1/2 + \rho} ).
\end{align*}
This shows \eqref{E:Moments1}.

We come to (b). Consider the uniform bounded moments condition \eqref{C:UniformBoundedMoments}. Using the hard-thresholded stabilization, it is enough to compute $\EE{ | H( [\cU_{A,m} \cap B(0,S^*)]\cup \{0\} )|^{p'} }$ and $\EE{ |H( \cU_{A,m} \cap B(0,S^*))|^{p'} }$ for an $A\in\cB$ and $m \in [1/2 |A|, 3/2 |A| ]$ as well as $p'\in\R_+$. This amounts to compute $\EE{ 2^{\delta N} }$, for some $\delta>0$ and $N$ follows a binomial distribution of length $m$ and a success probability of order $|A|^{-1}$. Hence,
\[
			\E{ 2^{\delta N} } = \sum_{k=0}^m \frac{m!}{k!(m-k)!} \Big(\frac{c}{|A|}\Big)^k \Big(1- \frac{c}{|A|}\Big)^{m-k} 2^{\delta k} = e^{ 3c (2^\delta - 1) / 2},
\]
where we use that $|A|^{-1} \le 3/2 \ m^{-1}$ and the right-hand side only depends on $\delta$. 

We continue with the inequalities in \eqref{E:Moments2a} to \eqref{E:Moments2c}. Let $m,n$ be arbitrary but fixed, $m\in [n-n^{1/2+\rho},n+n^{1/2+\rho}]$. Let $y\in B_n$. Let $M$ be the number of points of $\cU_{n,m}$ in $Q_y^{(\delta)}$. By assumption,
\[
		|H( (\cU_{n,m} \cap B(y,S^*) )| \vee |  H( (\cU_{n,m} \cap B(y,S^*) ) \cup \{y\} ) | \le  ce^{qM},		
\]
for some $q\in\N$.

 By construction, $M$ is stochastically dominated by 1 plus a binomial random variable of length $n+n^{1/2+\rho}$ and a success probability proportional to $n^{-1}$. Reasoning as before leads to \eqref{E:Moments2a}. \eqref{E:Moments2b} and \eqref{E:Moments2c} follow with the same reasoning. 

Finally, regarding (c), we can use a de-Poissonization argument, it is a routine to verify the first three statements \eqref{E:Moments3a} to \eqref{E:Moments3c} with Condition~\eqref{C:UniformBoundedMoments}; the proof works similar as in \cite{penrose2001central} Lemma 3.1 and Lemma 4.1. \eqref{E:Moments3d} follows from the polynomial growth of $H( (\cP-z) \cap B(0,n) )$ along with the exponential decay of $\p( S_0 \in (n-1,n] )$ which is at most $\p( S_0 > n-1)$.
\end{proof}

\begin{lemma}\label{L:LILTool}
Let $(X_n)_n$ be a sequence of random variables satisfying 
$$
(\log n )^{-\alpha} n^{-\beta} \| \max_{k\le n} |X_k| \|_q \le C (\log n)^{-(1+\delta)}
$$
for constants $q\ge 1$ and $C,\alpha,\beta, \delta \in\R_+$ (which are independent of $n$).

Then $\limsup_{n\to\infty} (\log n )^{-\alpha} n^{-\beta} |X_n| \le \wt C$ $a.s.$ for some $\wt C\in\R_+$ (independent of $n$).
\end{lemma}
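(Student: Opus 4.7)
The plan is a standard dyadic blocking argument: use Markov's inequality to control tail probabilities of $\max_{k\le n}|X_k|$ along a geometric subsequence, invoke the Borel--Cantelli lemma to get an almost sure bound on the subsequence, and then lift to all $n$ using the monotonicity of the running maximum.

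First, I would fix $\lambda > 0$ and apply the $q$-th moment Markov inequality: for any $n\ge 2$,
\[
\p\Bigl(\max_{k\le n}|X_k| > \lambda (\log n)^\alpha n^\beta\Bigr) \le \frac{\bigl\|\max_{k\le n}|X_k|\bigr\|_q^{\,q}}{\lambda^q (\log n)^{\alpha q} n^{\beta q}} \le \frac{C^q}{\lambda^q (\log n)^{q(1+\delta)}},
\]
where the second inequality is the hypothesis. Evaluating this on the dyadic subsequence $n_j = 2^j$ gives an upper bound proportional to $j^{-q(1+\delta)}$. Because $q\ge 1$ and $\delta>0$ imply $q(1+\delta)>1$, this sequence is summable in $j$, so the Borel--Cantelli lemma yields
\[
\p\Bigl(\max_{k\le n_j}|X_k| > \lambda (\log n_j)^\alpha n_j^\beta \text{ i.o.}\Bigr) = 0.
\]
Thus on a set of probability one there exists $j_0 = j_0(\omega)$ such that $\max_{k\le n_j}|X_k| \le \lambda (\log n_j)^\alpha n_j^\beta$ for all $j\ge j_0$.

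Second, I would interpolate to arbitrary indices. For any $n\ge n_{j_0}$, pick the unique $j$ with $n_{j-1}\le n < n_j$. Monotonicity of $k \mapsto \max_{i\le k}|X_i|$ together with the bounds $n_j \le 2n$ and $\log n_j \le \log(2n) \le 2\log n$ (for $n\ge 2$) gives
\[
|X_n| \le \max_{k\le n_j}|X_k| \le \lambda (\log n_j)^\alpha n_j^\beta \le \lambda\, 2^\beta (2\log n)^\alpha n^\beta = \wt C (\log n)^\alpha n^\beta
\]
almost surely, with $\wt C = \lambda\, 2^{\alpha+\beta}$. Hence $\limsup_{n\to\infty}(\log n)^{-\alpha} n^{-\beta}|X_n| \le \wt C$ almost surely, which is the claim.

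There is no genuine obstacle here; the proof is routine once one notices that the extra factor $(\log n)^{-(1+\delta)}$ in the hypothesis, raised to the $q$-th power, produces a $j$-summable bound along the dyadic sequence. The only small point to verify is that the transition from $n_j$ to $n$ costs only a multiplicative constant in both the polynomial and logarithmic factors, which is straightforward.
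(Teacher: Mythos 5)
Your proof is correct and follows essentially the same route as the paper: restrict to the dyadic subsequence $n_j=2^j$, exploit the extra $(\log n)^{-(1+\delta)}$ factor to obtain summability, and interpolate to general $n$ via monotonicity of the running maximum at the cost of a factor $2^{\alpha+\beta}$. The only (immaterial) difference is that you pass through Markov's inequality and Borel--Cantelli, whereas the paper sums the $L^q$-norms of the normalized maxima directly to conclude almost sure boundedness along the subsequence.
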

\begin{proof}
Clearly, $\sum_{j\in\N} (\log (2^j) )^{-\alpha} 2^{- j \beta} \| \max_{k\le 2^j} |X_k| \|_q < \infty$. Hence, $ \max_{k\le 2^j} |X_k|  = \cO_{a.c.}[ (\log (2^j) )^{\alpha}  (2^j)^{\beta} ]$. In particular, there is a constant $C^*\in\R_+$ which satisfies
$$
		\limsup_{j\to\infty} \	(\log (2^j) )^{-\alpha}  (2^j)^{-\beta} \max_{k\le 2^j} |X_k| \le C^* \quad a.s.
$$
Let $n\ge 4$ and write $\ol j$ for the largest integer such that $2^{\ol j -1 } < n \le 2^{\ol j}$. Then $\ol j \ge 2$ and
\begin{align*}
	(\log n )^{-\alpha} n^{-\beta} \max_{k\le n} |X_k| &\le \frac{ (\log (2^{\ol j}) )^{\alpha}}{ (\log n )^{\alpha}} \ \frac{ 2^{ \beta \ol j}}{n^\beta} \ \frac{ \max_{k\le 2^{\ol j}} |X_k| }{ (\log (2^{\ol j} ) )^{\alpha} 2^{\beta \ol j} }   \\
	&\le \frac{\ol j ^\alpha \ 2^\beta}{(\ol j -1 )^{\alpha} }  \frac{ \max_{k\le 2^{\ol j}} |X_k| }{ (\log (2^{\ol j} ) )^{\alpha} 2^{\beta \ol j} }   \le 2^{\alpha+\beta} \frac{ \max_{k\le 2^{\ol j}} |X_k| }{ (\log (2^{\ol j} ) )^{\alpha} 2^{\beta \ol j} }  . 
\end{align*}
Consequently,
$$
	\limsup_{n\to\infty} \ (\log n )^{- \alpha} n^{-\beta} |X_n| \le  \limsup_{n\to\infty} \ (\log n )^{-\alpha} n^{-\beta} \max_{k\le n } |X_k| \le 2^{\alpha+\beta} C^* < \infty
$$
with probability 1.
\end{proof}

\subsection{Deferred results for the law of the iterated logarithm}\label{A:LIL}

\begin{proof}[Proof of Lemma~\ref{L:Covariance}]
First, we show the amendment. Define $r=d(I,J)/2$. Put 
\begin{align}\label{E:Covariance1}
	\wt F_z = \E{ H(\cP \cap B(z,r )) - H(\cP''_z \cap B(z,r)) |\cF_z }
\end{align}
 and $X' = G( \sum_{z\in I} h(\wt F_z) )$ and $Y' = \wt G( \sum_{z\in J} h(\wt F_z) )$. Then
\begin{align*}
		\cov{X}{Y} &= \cov{X-X'}{Y-Y'} + \cov{X-X'}{Y'} + \cov{X'}{Y-Y'}
\end{align*}
because $\cov{X'}{Y'}=0$. Moreover, 
\begin{align*}
		| \cov{X-X'}{Y-Y'} | &\le \E{ |X-X'| |Y-Y'| \1{F_z \neq \wt F_z \text{ for one }z\in I\cup J } }  \\
		&\quad + \E{ |X-X'| \1{F_z \neq \wt F_z \text{ for one }z\in I } }  \\
		&\qquad \cdot \E{  |Y-Y'| \1{F_z \neq \wt F_z \text{ for one }z\in  J } } 
		\end{align*}
	and similarly for $\cov{X-X'}{Y'}$ and $\cov{X'}{Y-Y'}$. In both models, there are constants such that $\p( F_z \neq \wt F_z ) \le c_1 \exp( -c_2 r )$. This shows the amendment because both $G$ and $\wt G$ are bounded functions.
	
Consider now the special case where $G=\text{id}$ and where the index sets are singletons (the fact that $G$ is not bounded is irrelevant). If $h$ is the identity, then $\wt \sigma^2$ is simply $\EE{F_0^2}$. In the other case, use the definition in \eqref{E:Covariance1} with $r = d(z,0)/2$ and $\delta>0$
\begin{align*}
		&\sum_{z\in\Z^d} |\cov{ h(F_0)}{ h(F_z) }| \\
		&\le \EE{ h(F_0)^2 } + \sum_{z \in\Z^d:  z \neq 0} \Big| \E{ h(F_0) h( F_z) \1{ F_0 \neq \wt F_0, F_z \neq \wt F_z } } \Big| \\
		&\le \EE{ h(F_0)^2 } + \sum_{z \in\Z^d:  z \neq 0} \E{ h(F_0)^{2+\delta} }^{2/(2+\delta)} c'_1 \exp( -c'_2  \ d(z,0)/2 ) < \infty.
\end{align*}
Thus, the definition of $\wt \sigma^2$ is meaningful in both cases. This completes the proof.
\end{proof}

\begin{proof}[Proof of Proposition~\ref{P:NormalApproximation}]
The proof is based on approximation techniques between the characteristic functions and a suitable partitioning of the index sets into large and small blocks (which is a quite common technique when working with random fields).

Let $\epsilon\in (0,1/2)$ and $n\ge 2^{1/\epsilon}$. Set $p=p(n)\coloneqq \floor{ n^{1/2}}$, $q(n) \coloneqq \floor{n^{1/2 -\epsilon}}$ and $k(n)\coloneqq \floor{ (2n+1)/(2p+1+q) }$. The set $[-n,n]\cap \N$ contains $k$ sets $I_1,\ldots,I_k$ of cardinality $2p+1$ with distance $q$ between them. For each $i\in \{1,\ldots,k\}^d$, define a cube $J_i = I_{i_1}\times \ldots\times I_{i_d}$ with cardinality $\# J_i = (2p+1)^d$. Then one sees
\begin{align}\begin{split}\label{E:NormalApproximation0}
			1 & \ge \frac{ \# (\bigcup_{i} J_i )}{ \# D_n } = \frac{ k^d \ (2p+1)^d }{ (2n+1)^d } \\
			&\ge \Big( 1 -  \frac{4n^{1/2} }{2n+1} \Big)^d \ \Big( 1 -  \frac{n^{1/2 - \epsilon}}{n^{1/2} } \Big)^d\ge 1 - 2d(2n^{-\epsilon}).
\end{split}\end{align}
Moreover, let
\[
	\xi_i \coloneqq \sum_{z\in J_i} h(F_z), \quad i\in\{1,\ldots,k\}^d,
	\]
and let the family $\{ \xi'_i: i\in\{1,\ldots,k\}^d \}$ be i.i.d.\ with distribution $\xi_1$. (Note that the $\xi_i$ also depend on $n$.) We introduce the following random variables 
\begin{align*}
			Z_n &\coloneqq \frac{ \sum_{z\in D_n} h(F_z) }{ \wt \sigma (\# D_n)^{1/2} }, \qquad		Z'_n \coloneqq \frac{ \sum_{i} \xi_i }{ \wt \sigma (\# D_n)^{1/2} },\\
			\wt Z_n &\coloneqq \frac{ \sum_{i} \xi'_i }{ \wt \sigma (\# D_n)^{1/2} },\qquad			Z^*_n \coloneqq \frac{ \sum_{i}\xi'_i }{ k^d Var( \xi_1 )^{1/2} }.
\end{align*}
In particular, the variances of $Z_n, Z'_n, \wt Z_n, Z^*_n$ are uniformly bounded in $n$. 

Let $T>0$. Esseen's theorem (\cite{esseen1945fourier}) states that for two constants $K'_1, K'_2\in\R_+$, which are independent of $T$,
\begin{align*}
		| \p( Z_n \le z) - \Phi(z) | &\le K'_1 \int_{-T}^{T} \frac{1}{t} \ | \EE{ e^{it Z_n} } - \EE{e^{it Z} } | \ \intd{t} 	+ \frac{K'_2}{T},
\end{align*}
where $Z$ is a random variable with a standard normal distribution.
First, consider
\begin{align}\begin{split}\label{E:NormalApproximation1}
		| \E{ \exp( it Z_n ) } - \E{ \exp( it Z'_n) } | &\le Ct \ \E{ |Z_n-Z'_n|^2 }^{1/2} \\
		&= Ct \ \frac{ Var( \sum_{z\in D_n \setminus \cup_{i}  J_i } h(F_z) )^{1/2} }{\wt\sigma (\# D_n)^{1/2} } \le Ct \ n^{-\epsilon/2}
\end{split}\end{align}
because $\sum_y |Cov(h(F_z),h(F_y))| \le C \in\R_+$ for all $z\in\Z^d$ as in the proof of Lemma~\ref{L:Covariance}. Second, one finds
\begin{align}
			\Big| \E{ \exp( it \wt Z_n ) } - \E{ \exp( it Z^*_n) } \Big| &\le Ct \ \E{ \Big( \sum_{i} \xi'_i \Big)^2 \Big( \frac{\wt \sigma (\# D_n )^{1/2} - (k^d \ Var(\xi_1) )^{1/2} }{\wt \sigma (\# D_n)^{1/2} (k^d \ Var(\xi_1) )^{1/2} }		\Big)^2		}^{1/2} \nonumber \\
			&= Ct \Big| 1- \sqrt{ \frac{k^d \ Var(\xi_1) }{\wt\sigma^2 (\# D_n)} }	\Big| \nonumber \\	
					&\le Ct \Big| 1 - \frac{k^d (2p+1)^d }{(2n+1)^d}\Big| + Ct \Big| 1 - \frac{
			Var( \xi_1) }{\wt \sigma^2 (2p+1)^d } \Big| \label{E:NormalApproximation2}\\
			& \le Ct n^{-\epsilon}, \nonumber
\end{align}
where the last inequality follows from \eqref{E:NormalApproximation0} and the observation: Let $J = [-m,m]^d$, then
\begin{align*}
		| \wt \sigma^2 \# J - Var\big(\sum_{z\in J} h(F_z) \big) | &=  \Big| \sum_{z \in J} \sum_{y\notin J} \E{  h(F_y) h(F_z)  } \Big| \\
		&\le \sum_{r=0}^m  \sum_{z \in J, y\notin J, \| y-z\| = r} \Big| \E{  h(F_y) h(F_z)  } \Big| \\
		&\qquad + \sum_{r=m+1}^\infty  \sum_{z \in J, y\notin J, \| y-z\| = r} \Big| \E{  h(F_y) h(F_z)  } \Big|.
\end{align*}
If $z\in J$, $y\notin J$ and $\|z-y\|=r$, then $m-r < \|z\| \le m$. The number of such $z$ is bounded above by $(2m+1)^d - (2(m-r)+1)^d$, which is at most $2dr (2m+1)^{d-1}$. So, for a given $r$ the inner sum in the first term is $\cO[2dr (2m+1)^{d-1} (2r+1)^d \exp( - c r)]$. Moreover, for a given $r>m$, the inner sum in the second term is of order $(2m+1)^d (2r+1)^d \exp( - c r)$. This shows $| \wt \sigma^2  - Var(\sum_{z\in J} h(F_z) ) / \# J | \le  \# J^{1/d} $. Using the stationarity, we can apply this estimate to the second term in \eqref{E:NormalApproximation2}.

We can apply Lemma~\ref{L:Covariance} to see
\begin{align}
\begin{split}\label{E:NormalApproximation3}
				&\Biggl|		\E{ \exp\Big( it \sum_{i} \xi_i \Big) - \exp\Big( it \sum_{i} \xi'_i \Big) } \Biggl| \\
				&\le \Biggl | \cov{ \exp\Big( it \sum_{i: i\neq j} \xi_i \Big)}{ \exp\Big( it \xi_{j} \Big) } \Biggl |\\
				&\quad + \Biggl | \E{ \exp\Big( it \sum_{i: i\neq j} \xi_i \Big) - \exp\Big( it \sum_{i: i\neq j} \xi'_i \Big) } \Biggl | \E{ \exp\Big( it \xi_{j} \Big) } \\
				&\le c_1 \exp\Big( -c_2 n^{1/2 - \epsilon} \Big) + \E{ \exp\Big( it \sum_{i: i\neq j} \xi_i \Big) - \exp\Big( it \sum_{i: i\neq j} \xi'_i \Big) }  \\
				&\le c_1 \ k^d \ \exp( -c_2 n^{1/2 - \epsilon} ),
\end{split}\end{align}
Finally, by the Esseen's lemma (\cite{petrov2012sums}, p. 109, Lemma 1), 
\begin{align}
\begin{split}\label{E:NormalApproximation4}
		& \Big| e^{ -t^2/2 } - \EE{ \exp\big( it \sum_i \xi'_i / ( k^d \E{ \xi_i^2 } )^{1/2} \big) } \Big| \\
		& \le 16 \ \frac{ \E{|\xi_i |^3} }{\E{\xi_i^2}^{3/2}} k^{-d/2} |t|^3 e^{-t^2/2} \le C k^{-d/2} |t|^3 e^{-t^2/2} ,
\end{split}
\end{align}
whenever, $|t| \le k^{d/2} / 4 \ \cdot \ \EE{\xi_i^2}^{3/2} / \EE{|\xi_i|^3}$. The last inequality in \eqref{E:NormalApproximation4} follows because $\EE{| \sum_{z\in J} h(F_z ) |^4 } \le C (\# J)^2$ for a constant $C$ which does not depend on $J$; this can be seen with similar stabilization techniques as in the proof of Lemma~\ref{L:Covariance}, we skip the details.

Using the estimates from \eqref{E:NormalApproximation1} to \eqref{E:NormalApproximation4} in combination with Esseen's theorem, yields two constants $K_1,K_2\in\R_+$, independent of $T$, such that
\begin{align*}
		| \p( Z_n \le z) - \Phi(z) | &\le K_1 \int_{-T}^{T} \frac{tn^{-\epsilon/2}}{t} \intd{t} + K_1 \int_{-T}^{T} \frac{k^d  e^{-c_2 n^{1/2 - \epsilon} } }{t} \wedge 1 \ \intd{t} \\
		&\quad + K_1 \int_{-T}^{T} \frac{tn^{-\epsilon}}{t} \intd{t} + K_1 \int_{-T}^{T} k^{-d/2} |t|^2 e^{-t^2/2} \ \intd{t} + \frac{K_2}{T} \le C n^{-\epsilon/4},
\end{align*}
for a choice of $T$ proportionally to $n^{\epsilon/4}$. This completes the proof.
\end{proof}

\begin{proof}[Proof of Proposition~\ref{P:MaximalInequality}]
Set $\phi(n) \coloneqq \sqrt{2 \wt \sigma^2 (2n+1)^d \log\log n}$, $S_n = \sum_{z: \|z\|\le n} h(F_z)$ and $r \coloneqq \floor{n^{1/6}}$, $k\coloneqq  \floor{n/k}$. Also set
\[
		E_j\coloneqq \{ |S_i| < \beta \phi(n), \> \forall i < j \} \cap \{ |S_j| \ge \beta \phi(n) \}.
\]
Then we obtain with the abbreviation $\xi_i = \sum_{z: \|z\| = i } h(F_z)$
\begin{align}
			\p( \max_{1\le j \le n} |S_j| \ge \beta \phi(n) ) &\le \sum_{i=0}^{k-2} \p\Big( \bigcup_{j=1}^r \Big( E_{ir+j} \cap |S_n-S_{ir+j}| \ge \epsilon \phi(n) \Big) \Big) \nonumber \\
			&\quad + \p\Big( \bigcup_{\ell=(k-1)r+1}^n \Big( E_{\ell} \cap |S_n-S_{\ell}| \ge \epsilon \phi(n) \Big) \Big) \nonumber \\
			&\quad + \p(|S_n| \ge \beta(1-\epsilon) \phi(n)) \nonumber\\
			&\le \sum_{i=0}^{k-2} \p\Big( \bigcup_{j=1}^r \Big( E_{ir+j} \cap |S_n-S_{(i+2)r}| \ge \frac{\epsilon}{2} \phi(n) \Big) \Big) \label{E:MaximalInequality1} \\
			&\quad + \sum_{i=0}^{k-2}  \p\Big( |\xi_{ir+1}| + \ldots + |\xi_{ir+2r}| \ge \frac{\epsilon}{2} \phi(n) \Big) \label{E:MaximalInequality2} \\
			&\quad + \p\Big( |\xi_{(k-1)r+1}| + \ldots + |\xi_{n}| \ge \epsilon \phi(n) \Big) \label{E:MaximalInequality3} \\
			&\quad + \p(|S_n| \ge \beta(1-\epsilon) \phi(n)) . \nonumber 
\end{align}
Consider the term in \eqref{E:MaximalInequality1}. Each single summand is
\begin{align}
		&\p\Big(	\bigcup_{j=1}^r \Big\{ |S_{ir+j}|\ge \beta\phi(n), |S_u| < \beta \phi(n), \quad \forall u<ir+j \Big\} \cap \Big\{ |S_n -S_{(i+2)r } |\ge \frac{\epsilon}{2} \phi(n)	\Big\} \Big) \nonumber \\
		\begin{split}\label{E:MaximalInequality4}
		&\le \p\Big(	\bigcup_{j=1}^r \Big\{ |S_{ir+j}|\ge \beta\phi(n), |S_u| < \beta \phi(n), \quad \forall u<ir+j \Big\} \Big)\\
		&\quad \cdot \p(  |S_n -S_{(i+2)r } |\ge \phi(n) \ \epsilon / 2  ) + c_1 (2n+1)^d  e^{-c_2 r },
		\end{split}
\end{align}
where we have used the following result: Let $I,J \subset \Z^d$ be two disjoint sets with $r = d(I,J)/2$. Then, using the stabilizing property of the functional $H$, we have
\begin{align*}
			&| \p( (F_z: z\in I) \in A , (F_z: z\in J) \in B ) \\
			&\qquad - \p( (F_z: z\in I) \in A ) \ \p( (F_z: z\in J) \in B ) | \le  c_1 (\# I + \# J ) e^{-c_2 r} ,
\end{align*}
where the constants $c_1,c_2 \in\R_+$ depend on $H$. This shows then \eqref{E:MaximalInequality4}.

The second factor in the first term in \eqref{E:MaximalInequality4} is ultimately smaller than 1/2. Indeed,
\begin{align*}
			&\p(  |S_n -S_{(i+2)r } |\ge  \phi(n) \ \epsilon / 2  ) \le \Big( \frac{\epsilon}{2} \phi(n)  \Big)^{-2} \E{ |S_n -S_{(i+2)r} |^2	} \\
			&= \Big( \frac{\epsilon}{2} \phi(n)  \Big)^{-2} \sum_{y,z: (i+2)r < \|y\|, \|z\|\le n } \E{ h(F_y) h(F_z) }  \le C / \log \log n.
\end{align*}
We continue with \eqref{E:MaximalInequality2}. Since the fourth moment of $h(F_z)$ exists, the Markov inequality yields that this term is of order
\begin{align*}
			&\sum_{i=0}^{k-2} (2r)^4 \epsilon^{-4} (2n+1)^{-2} (\log\log n )^{-2} \\
			&= (k-1) (2r)^4 \epsilon^{-4} (2n+1)^{-2} (\log\log n )^{-2} \le C \frac{r^4 (k-1)}{\epsilon^4 n^2 (\log\log n)^2 }.
\end{align*}
The same reasoning applies to \eqref{E:MaximalInequality3}, which is of order
\begin{align*}
			&\frac{ |n-(k-1)r|^4  }{\epsilon^{4} n^{2} (\log\log n)^{2} } \le C \frac{r^4 }{\epsilon^4 n^2 (\log\log n)^2 }.
\end{align*}
Collecting the estimates for the terms in \eqref{E:MaximalInequality1} to \eqref{E:MaximalInequality3} yields for $n$ sufficiently large
\begin{align}\begin{split}\label{E:MaximalInequality5}
				\p( \max_{1\le j \le n} |S_j| \ge \beta \phi(n) ) &\le C (2n+1)^d c_1 e^{-c_2 r} + C \frac{r^4 k}{\epsilon^4 n^2 (\log\log n)^2 } \\
				&\quad + \frac{1}{2}\p( \max_{1\le j \le n} |S_j| \ge \beta \phi(n) )  + \p(|S_n| \ge \beta(1-\epsilon) \phi(n)) .
\end{split}\end{align}
Rearranging the terms in \eqref{E:MaximalInequality5} and using the normal-approximation estimate from Proposition~\ref{P:NormalApproximation}, we obtain
\begin{align*}
		\p( \max_{1\le j \le n} |S_j| \ge \beta \phi(n) ) &\le C \frac{r^3}{\epsilon^4 n} + 2 \p( |S_n| \ge \beta (1-\epsilon) \phi(n)) \le C (\log n)^{-(1+\rho) }
\end{align*}
for some $\rho\in\R_+$. This completes the proof.
\end{proof}

\subsection{Results for the invariance principle}

\begin{lemma}\label{L:BMContinuity}
Let $B$ be a standard Brownian motion. Let $(t_n)_n$ be of order $o(n)$. Then
\begin{align}\label{E:BMContinuity1}
	\limsup_{n\to\infty}		\max_{k: \ k\le n} \, \sup_{x: \ |x-k\sigma^2| \le t_n} \,  \frac{ |B(x)-B(k\sigma^2)| }{ \sqrt{ 2 t_n \log( n\sigma^2 t_n^{-1} ) }} \le 1  \quad a.s.
\end{align}
\end{lemma}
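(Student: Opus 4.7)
The plan is to prove the statement as a direct consequence of the Cs\"{o}rg\H{o}--R\'{e}v\'{e}sz modulus-of-continuity theorem for Brownian motion.

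Write $\phi_n := \sqrt{2 t_n \log(n\sigma^2/t_n)}$. First I would observe that every anchored pair $(x, k\sigma^2)$ appearing in the supremum on the left satisfies $|x - k\sigma^2|\le t_n$, so
\[
\max_{k\le n}\sup_{|x-k\sigma^2|\le t_n}|B(x)-B(k\sigma^2)| \ \le\ \sup_{0\le s\le T_n-t_n}\ \sup_{0\le t\le t_n}|B(s+t)-B(s)|,
\]
where $T_n := n\sigma^2 + t_n \asymp n$. The right-hand side is precisely the modulus of continuity of $B$ on $[0, T_n]$ at window length $a_{T_n}=t_n$.

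Invoking the Cs\"{o}rg\H{o}--R\'{e}v\'{e}sz theorem --- under the mild regularity that $t_n$ is non-decreasing with $t_n/n$ non-increasing, and $\log(n/t_n)/\log\log n\to\infty$ --- yields
\[
\limsup_{n\to\infty}\frac{\sup_{0\le s\le T_n-t_n}\sup_{0\le t\le t_n}|B(s+t)-B(s)|}{\sqrt{2\, t_n\log(T_n/t_n)}}\le 1\ \text{a.s.}
\]
Since $\log(T_n/t_n)\sim \log(n\sigma^2/t_n)$ as $n\to\infty$, this matches $\phi_n$ asymptotically and gives the claim. The regularity hypotheses on $t_n$ can be arranged without loss of generality by replacing $t_n$ by monotone envelopes of the same asymptotic order, and in the application to Theorem~\ref{T:1DSIP} the specific form $t_n=C\sqrt{n\log\log n}$ satisfies all of them automatically (in particular $\log(n/t_n)\sim (1/2)\log n\gg \log\log n$).

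If a self-contained derivation is preferred, one can combine (i) the reflection-principle Gaussian tail $P(\sup_{|u|\le t_n}|B(u)|\ge \lambda)\le 8\, e^{-\lambda^2/(2t_n)}$, obtained by splitting $[-t_n,t_n]$ at $u=0$ into two independent halves, (ii) a blocking argument that groups consecutive anchors $k$ whose windows lie in a common interval of length $O(t_n)$, so that the effective number of independent groups is $\asymp n/t_n$ rather than $n$, and (iii) Borel--Cantelli along a dyadic subsequence with monotonicity-based interpolation for intermediate $n$. The main obstacle here is precisely step (ii): the naive union bound gives $8n(t_n/(n\sigma^2))^{(1+\epsilon)^2}$, which is not summable for small $\epsilon>0$, so the constant $1$ in the limsup cannot be achieved without exploiting the correlation between nearby anchored suprema --- this correlation is exactly what the Cs\"{o}rg\H{o}--R\'{e}v\'{e}sz bound encapsulates cleanly.
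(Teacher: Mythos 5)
Your proof is correct (under the mild regularity on $(t_n)_n$ that you yourself flag, which holds for the $t_n \asymp \sqrt{n\log\log n}$ actually used in the proof of Theorem~\ref{T:1DSIP}), but it takes a different route from the paper. The paper rescales time, setting $W(x)=B(x n\sigma^2)(n\sigma^2)^{-1/2}$, bounds the max--sup by the global modulus of continuity of $W$ on $[0,1]$ at mesh $\epsilon_n=t_n(n\sigma^2)^{-1}$, and then invokes L\'evy's modulus-of-continuity theorem; you instead stay on $[0,T_n]$ with $T_n=n\sigma^2+t_n$ and invoke the Cs\"org\H{o}--R\'ev\'esz large-increments theorem with window $a_{T_n}=t_n$. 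The two are related by Brownian scaling, but your version is arguably the more rigorous one: L\'evy's theorem is a statement about a single fixed Brownian motion as $\epsilon\downarrow 0$, whereas here the rescaled process $W=W_n$ changes with $n$, so the paper's appeal to it requires exactly the uniformity that Cs\"org\H{o}--R\'ev\'esz packages explicitly. Your route also makes visible a hypothesis the lemma's bare assumption $t_n=o(n)$ suppresses: the general increments normalizer is $\sqrt{2t_n(\log(T_n/t_n)+\log\log T_n)}$, and dropping the $\log\log$ term to land on the stated $\sqrt{2t_n\log(n\sigma^2 t_n^{-1})}$ is legitimate only when $\log(n/t_n)\gg\log\log n$ --- true in the application since $\log(n/t_n)\sim\tfrac12\log n$ there, but not for every $t_n=o(n)$. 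Your closing remarks on why a naive union bound over the $n$ anchors fails, and why a blocking argument at scale $t_n$ is needed, correctly identify the content that the cited theorem encapsulates; they are not required for the proof but show the appeal to Cs\"org\H{o}--R\'ev\'esz is not circular hand-waving.
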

\begin{proof}
We use the fact that $t\mapsto a^{-1/2} B(a t)$ is a time-changed Brownian motion for $a>0$ and make the definition $W(x) = B(x (n\sigma^2) )(n\sigma^2)^{-1/2}$, $x\ge0$. Then the maximum on the left-hand side equals
\[
	\max_{k: \ k\le n} \,  \sup_{x: \  |x(n\sigma^2)^{-1}-kn^{-1} | \le t_n (n\sigma^2)^{-1} } \,  |W(x(n\sigma^2)^{-1})-W(kn^{-1})| \ (n\sigma^2)^{1/2}. 
\]
Plainly, this is at most $
	\sup_{x,y: \ |x-y|\le t_n (n\sigma^2)^{-1} } |W(x)-W(y)| \ (n\sigma^2)^{1/2}$.
 L{\' e}vy's modulus of continuity theorem (in the global version) yields
 \[
 	\limsup_{\epsilon\downarrow 0} \sup_{ \substack{ x,y \in [0,1],\\ |x-y| \le \epsilon}} \frac{|W(x)-W(y)|}{\sqrt{2\epsilon \log \epsilon^{-1} } } = 1 \quad a.s.
	\]
Consequently, the left-hand side of \eqref{E:BMContinuity1} is bounded above by
1 $a.s.$
\end{proof}


\begin{thebibliography}{56}
\providecommand{\natexlab}[1]{#1}
\providecommand{\url}[1]{\texttt{#1}}
\expandafter\ifx\csname urlstyle\endcsname\relax
  \providecommand{\doi}[1]{doi: #1}\else
  \providecommand{\doi}{doi: \begingroup \urlstyle{rm}\Url}\fi

\bibitem[Adler(2008)]{adler2008some}
R.~J. Adler.
\newblock Some new random field tools for spatial analysis.
\newblock \emph{Stochastic Environmental Research and Risk Assessment},
  22\penalty0 (6):\penalty0 809, 2008.

\bibitem[Arcones(1997)]{arcones1997law}
M.~Arcones.
\newblock The law of the iterated logarithm for a triangular array of empirical
  processes.
\newblock \emph{Electronic Journal of Probability}, 2, 1997.

\bibitem[Arcones and Gin{\'e}(1995)]{arcones1995law}
M.~A. Arcones and E.~Gin{\'e}.
\newblock On the law of the iterated logarithm for canonical {U}-statistics and
  processes.
\newblock \emph{Stochastic processes and their applications}, 58\penalty0
  (2):\penalty0 217--245, 1995.

\bibitem[Berkes et~al.(2014)]{berkes2014KMT}
I.~Berkes, W.~Liu and W.B.~Wu.
\newblock Komlós–Major–Tusnády approximation under dependence.
\newblock \emph{The Annals of Probability}, 42\penalty0 (2):\penalty0 794--817, 2014.


\bibitem[Berkes and Philipp(1979)]{berkes1979approximation}
I.~Berkes and W.~Philipp.
\newblock Approximation theorems for independent and weakly dependent random
  vectors.
\newblock \emph{The Annals of Probability}, pages 29--54, 1979.

\bibitem[Bickel and Breiman(1983)]{bickel1983sums}
P.~Bickel and L.~Breiman.
\newblock  Sums of functions of nearest neighbor distances, moment bounds, limit theorems and a goodness of fit test.
\newblock \emph{The Annals of Probability}, 11\penalty0 (1):\penalty0 185--214, 1983.

\bibitem[Bierm{\'e} and Durieu(2014)]{bierme2014IP}
H.~Bierm{\'e} and O.~Durieu.
\newblock Invariance principles for self-similar set-indexed random fields.
\newblock \emph{Transactions of the American Mathematical Society}, 366\penalty0 (11):\penalty0 5963--5989, 2014.

\bibitem[Carlsson(2009)]{carlsson2009topology}
G.~Carlsson.
\newblock Topology and data.
\newblock \emph{Bulletin of the American Mathematical Society}, 46\penalty0
  (2):\penalty0 255--308, 2009.

\bibitem[Chatterjee and Sen(2017)]{chatterjee2017minimal}
S.~Chatterjee and S.~Sen.
\newblock Minimal spanning trees and {S}tein’s method.
\newblock \emph{The Annals of Applied Probability}, 27\penalty0 (3):\penalty0
  1588--1645, 2017.

\bibitem[Chazal and Michel(2017)]{chazal2017introduction}
F.~Chazal and B.~Michel.
\newblock An introduction to topological data analysis: fundamental and
  practical aspects for data scientists.
\newblock \emph{arXiv preprint arXiv:1710.04019}, 2017.

\bibitem[Crawford et~al.(2019)Crawford, Monod, Chen, Mukherjee, and
  Rabad{\'a}n]{crawford2016functional}
  L.~Crawford, A.~Monod, A.~X. Chen, S.~Mukherjee, and R.~Rabad{\'a}n.
\newblock Predicting clinical outcomes in glioblastoma: An application of topological and functional data analysis. 
\newblock \emph{Journal of the American Statistical Association}, pages 1139--1150, 2019.

\bibitem[Csorgo and Horv{\'a}th(1997)]{csorgo1997limit}
M.~Csorgo and L.~Horv{\'a}th.
\newblock \emph{Limit theorems in change-point analysis}.
\newblock John Wiley \& Sons Chichester, 1997.

\bibitem[Decreusefond et~al.(2014)Decreusefond, Ferraz, Randriambololona, and
  Vergne]{decreusefond2014simplicial}
L.~Decreusefond, E.~Ferraz, H.~Randriambololona, and A.~Vergne.
\newblock Simplicial homology of random configurations.
\newblock \emph{Advances in Applied Probability}, 46\penalty0 (2):\penalty0
  325--347, 2014.

\bibitem[Dehling and Wendler(2009)]{dehling2009law}
H.~Dehling and M.~Wendler.
\newblock Law of the iterated logarithm for {U}-statistics of weakly dependent
  observations.
\newblock \emph{arXiv preprint arXiv:0911.1200}, 2009.

\bibitem[Eberlein(1986)]{eberlein1986strong}
E.~Eberlein.
\newblock On strong invariance principles under dependence assumptions.
\newblock \emph{The Annals of Probability}, pages 260--270, 1986.

\bibitem[Edelsbrunner and Harer(2010)]{edelsbrunner2010computational}
H.~Edelsbrunner and J.~Harer.
\newblock \emph{Computational topology: an introduction}.
\newblock American Mathematical Soc., 2010.

\bibitem[Esseen(1945)]{esseen1945fourier}
C.-G. Esseen.
\newblock Fourier analysis of distribution functions. a mathematical study of
  the {L}aplace-{G}aussian law.
\newblock \emph{Acta Mathematica}, 77:\penalty0 1--125, 1945.

\bibitem[Gidea and Katz(2018)]{gidea2018topological}
M.~Gidea and Y.~Katz.
\newblock Topological data analysis of financial time series: Landscapes of
  crashes.
\newblock \emph{Physica A: Statistical Mechanics and its Applications},
  491:\penalty0 820--834, 2018.

\bibitem[Hall and Heyde(1980)]{hall1980martingale}
P.~Hall and C.~C. Heyde.
\newblock \emph{Martingale limit theory and its application}.
\newblock Academic press, 1980.

\bibitem[Hartman and Wintner(1941)]{hartman1941law}
P.~Hartman and A.~Wintner.
\newblock On the law of the iterated logarithm.
\newblock \emph{American Journal of Mathematics}, 63\penalty0 (1):\penalty0
  169--176, 1941.

\bibitem[Hatcher(2002)]{hatcher}
A.~Hatcher.
\newblock \emph{Algebraic Topology}.
\newblock Cambridge University Press, 2002.

\bibitem[Heyde and Scott(1973)]{heyde1973invariance}
C.~Heyde and D.~Scott.
\newblock Invariance principles for the law of the iterated logarithm for
  martingales and processes with stationary increments.
\newblock \emph{The Annals of Probability}, 1\penalty0 (3):\penalty0 428--436,
  1973.

\bibitem[Hug et~al.(2016)Hug, Last, and Schulte]{hug2016second}
D.~Hug, G.~Last, and M.~Schulte.
\newblock Second-order properties and central limit theorems for geometric
  functionals of {B}oolean models.
\newblock \emph{The Annals of Applied Probability}, 26\penalty0 (1):\penalty0
  73--135, 2016.

\bibitem[Jiang(1999)]{jiang1999some}
J.~Jiang.
\newblock Some laws of the iterated logarithm for two parameter martingales.
\newblock \emph{Journal of Theoretical Probability}, 12\penalty0 (1):\penalty0
  49--74, 1999.

\bibitem[Khinchin(1924)]{khinchin1924asymptotische}
A.~Khinchin.
\newblock {\" U}ber einen Satz der Wahrscheinlichkeitsrechnung.
\newblock \emph{Fundamenta Mathematicae}, 6:\penalty0  9--20, 1924.


\bibitem[Kolmogorov(1929)]{andrei1929kolmogorov}
A.~Kolmogorov.
\newblock {\"U}ber das {G}esetz der iterierten {L}ogarithmus.
\newblock \emph{Mathematische Annalen}, 101:\penalty0 126--135, 1929.

\bibitem[Koml{\'o}s et~al.(1975)Koml{\'o}s, Major, and
  Tusn{\'a}dy]{komlos1975approximation}
J.~Koml{\'o}s, P.~Major, and G.~Tusn{\'a}dy.
\newblock An approximation of partial sums of independent rv's, and the sample
  df. {I}.
\newblock \emph{Zeitschrift f{\"u}r Wahrscheinlichkeitstheorie und verwandte
  Gebiete}, 32\penalty0 (1-2):\penalty0 111--131, 1975.

\bibitem[Koml{\'o}s et~al.(1976)Koml{\'o}s, Major, and
  Tusn{\'a}dy]{komlos1976approximation}
J.~Koml{\'o}s, P.~Major, and G.~Tusn{\'a}dy.
\newblock An approximation of partial sums of independent rv's, and the sample
  df. {II}.
\newblock \emph{Zeitschrift f{\"u}r Wahrscheinlichkeitstheorie und verwandte
  Gebiete}, 34\penalty0 (1):\penalty0 33--58, 1976.

\bibitem[Krebs and Hirsch(2020)]{krebshirsch2020}
J.~Krebs and C.~Hirsch.
\newblock Functional central limit theorems for persistent Betti numbers on cylindrical networks.
\newblock \emph{arXiv preprint arXiv:2003.13490}, 2020.

\bibitem[Krebs and Polonik(2019)]{krebs2019asymptotic}
J.~Krebs and W.~Polonik.
\newblock On the asymptotic normality of persistent {B}etti numbers.
\newblock \emph{arXiv preprint arXiv:1903.03280}, 2019.

\bibitem[Krengel(1985)]{krengel2011ergodic}
U.~Krengel.
\newblock \emph{Ergodic theorems}, volume~1.
\newblock Walter de Gruyter, 1985.

\bibitem[Lachieze-Rey et al.(2019)]{lachieze2019normal}
R.~Lachi{\` e}ze-Rey, M.~Schulte, J.~Yukich.
\newblock Normal approximation for stabilizing functionals.
\newblock \emph{The Annals of Applied Probability}, 29\penalty0 (2):\penalty0
  931--993, 2019.

\bibitem[Lee(1997)]{lee1997central}
S.~Lee.
\newblock The central limit theorem for {E}uclidean minimal spanning trees {I}.
\newblock \emph{The Annals of Applied Probability}, 7\penalty0 (4):\penalty0
  996--1020, 1997.

\bibitem[Lee(1999)]{lee1999central}
S.~Lee.
\newblock The central limit theorem for {E}uclidean minimal spanning trees
  {II}.
\newblock \emph{Advances in Applied Probability}, 31\penalty0 (4):\penalty0
  969--984, 1999.

\bibitem[Lee et~al.(2017)Lee, Barthel, D{\l}otko, Moosavi, Hess, and
  Smit]{lee2017quantifying}
Y.~Lee, S.~D. Barthel, P.~D{\l}otko, S.~M. Moosavi, K.~Hess, and B.~Smit.
\newblock Quantifying similarity of pore-geometry in nanoporous materials.
\newblock \emph{Nature communications}, 8:\penalty0 15396, 2017.

\bibitem[Lerche(1986)]{lerche1986sequential}
H.~R. Lerche.
\newblock Sequential analysis and the law of the iterated logarithm.
\newblock \emph{Lecture Notes-Monograph Series}, pages 40--53, 1986.

\bibitem[Li et~al.(1992)Li, Rao, and Wang]{li1992law}
D.~Li, M.~B. Rao, and X.~Wang.
\newblock The law of the iterated logarithm for independent random variables
  with multidimensional indices.
\newblock \emph{The Annals of Probability}, pages 660--674, 1992.

\bibitem[Oodaira and Yoshihara(1971)]{oodaira1971law}
H.~Oodaira and K.-I. Yoshihara.
\newblock The law of the iterated logarithm for stationary processes satisfying
  mixing conditions, 1971.

\bibitem[Penrose(2003)]{penrose2003random}
M.~Penrose.
\newblock \emph{Random geometric graphs}.
\newblock Oxford university press, 2003.

\bibitem[Penrose and Yukich(2001)]{penrose2001central}
M.~D. Penrose and J.~E. Yukich.
\newblock Central limit theorems for some graphs in computational geometry.
\newblock \emph{The Annals of Applied Probability}, 11\penalty0 (4):\penalty0
  1005--1041, 2001.

\bibitem[Petrov(2012)]{petrov2012sums}
V.~V. Petrov.
\newblock \emph{Sums of independent random variables}, volume~82.
\newblock Springer Science \& Business Media, 2012.

\bibitem[Philipp(1969)]{philipp1969law}
W.~Philipp.
\newblock The law of the iterated logarithm for mixing stochastic processes.
\newblock \emph{The Annals of Mathematical Statistics}, 40\penalty0
  (6):\penalty0 1985--1991, 1969.

\bibitem[Philipp(1977)]{philipp1977functional}
W.~Philipp.
\newblock A functional law of the iterated logarithm for empirical distribution
  functions of weakly dependent random variables.
\newblock \emph{The Annals of Probability}, pages 319--350, 1977.

\bibitem[Philipp et~al.(1975)Philipp, Stout, and Stout]{philipp1975almost}
W.~Philipp, W.~F. Stout, and W.~Stout.
\newblock \emph{Almost sure invariance principles for partial sums of weakly
  dependent random variables}, volume 161.
\newblock American Mathematical Soc., 1975.

\bibitem[Rio(1995)]{rio1995functional}
E.~Rio.
\newblock The functional law of the iterated logarithm for stationary strongly
  mixing sequences.
\newblock \emph{The Annals of Probability}, 23\penalty0 (3):\penalty0
  1188--1203, 1995.

\bibitem[Robbins(1970)]{robbins1970statistical}
H.~Robbins.
\newblock Statistical methods related to the law of the iterated logarithm.
\newblock \emph{The Annals of Mathematical Statistics}, 41\penalty0
  (5):\penalty0 1397--1409, 1970.

\bibitem[Schulte(2012)]{schulte2012voronoi}
M.~Schulte.
\newblock  A central limit theorem for the Poisson-Voronoi approximation.
\newblock \emph{Advances in Applied Mathematics}, 49\penalty0:\penalty0 285--306, 2012.

\bibitem[Schmuland and Sun(2004)]{schmuland_sun_2004}
B.~Schmuland and W.~Sun.
\newblock A central limit theorem and law of the iterated logarithm for a
  random field with exponential decay of correlations.
\newblock \emph{Canadian Journal of Mathematics}, 56\penalty0 (1):\penalty0
  209–224, 2004.
\newblock \doi{10.4153/CJM-2004-010-6}.

\bibitem[Schneider and Weil(2008)]{schneider2008stochastic}
R.~Schneider and W.~Weil.
\newblock \emph{Stochastic and integral geometry}.
\newblock Springer Science \& Business Media, 2008.

\bibitem[Shao(1993)]{shao1993almost}
Q.-M. Shao.
\newblock Almost sure invariance principles for mixing sequences of random
  variables.
\newblock \emph{Stochastic processes and their applications}, 48\penalty0
  (2):\penalty0 319--334, 1993.

\bibitem[Stout(1970)]{stout1970hartman}
W.~F. Stout.
\newblock The {H}artman-{W}intner law of the iterated logarithm for
  martingales.
\newblock \emph{The Annals of Mathematical Statistics}, 41\penalty0
  (6):\penalty0 2158--2160, 1970.

\bibitem[Strassen(1964)]{strassen1964invariance}
V.~Strassen.
\newblock An invariance principle for the law of the iterated logarithm.
\newblock \emph{Zeitschrift f{\"u}r Wahrscheinlichkeitstheorie und verwandte
  Gebiete}, 3\penalty0 (3):\penalty0 211--226, 1964.

\bibitem[Strassen(1967)]{strassen1967almost}
V.~Strassen.
\newblock Almost sure behavior of sums of independent random variables and
  martingales.
\newblock In \emph{Proceedings of the Fifth Berkeley Symposium on Mathematical
  Statistics and Probability, Volume 2: Contributions to Probability Theory,
  Part 1}. The Regents of the University of California, 1967.

\bibitem[Th{\" a}le and Yukich(2016)]{thaele2016voronoi}
C.~Th{\" a}le and J.~Yukich.
\newblock Asymptotic theory for statistics of the Poisson-Voronoi approximation.
\newblock \emph{Bernoulli}, 22\penalty0 (4):\penalty0 2372--2400, 2016.

\bibitem[Thomas and Owada(2019)]{thomas2019functional}
A.~M. Thomas and T.~Owada.
\newblock Functional limit theorems for the {E}uler characteristic process in
  the critical regime.
\newblock \emph{arXiv preprint arXiv:1910.00751}, 2019.

\bibitem[Wichura(1973)]{wichura1973some}
M.~J. Wichura.
\newblock Some {S}trassen-type laws of the iterated logarithm for
  multiparameter stochastic processes with independent increments.
\newblock \emph{The Annals of Probability}, pages 272--296, 1973.

\bibitem[Wu(2005)]{wu2005bahadur}
W.~B. Wu.
\newblock On the {B}ahadur representation of sample quantiles for dependent
  sequences.
\newblock \emph{The Annals of Statistics}, 33\penalty0 (4):\penalty0
  1934--1963, 2005.

\bibitem[Wu(2007)]{wu2007strong}
W.~B. Wu.
\newblock Strong invariance principles for dependent random variables.
\newblock \emph{The Annals of Probability}, 35\penalty0 (6):\penalty0
  2294--2320, 2007.

\bibitem[Yao et~al.(2009)Yao, Sun, Huang, Bowman, Singh, Lesnick, Guibas,
  Pande, and Carlsson]{yao2009topological}
Y.~Yao, J.~Sun, X.~Huang, G.~R. Bowman, G.~Singh, M.~Lesnick, L.~J. Guibas,
  V.~S. Pande, and G.~Carlsson.
\newblock Topological methods for exploring low-density states in biomolecular
  folding pathways.
\newblock \emph{The Journal of Chemical Physics}, 130\penalty0 (14):\penalty0
  04B614, 2009.

\bibitem[Yogeshwaran et~al.(2017)Yogeshwaran, Subag, and
  Adler]{yogeshwaran2017random}
D.~Yogeshwaran, E.~Subag, and R.~J. Adler.
\newblock Random geometric complexes in the thermodynamic regime.
\newblock \emph{Probability Theory and Related Fields}, 167\penalty0
  (1-2):\penalty0 107--142, 2017.

\bibitem[Zhao and Woodroofe(2008)]{zhao2008law}
O.~Zhao and M.~Woodroofe.
\newblock Law of the iterated logarithm for stationary processes.
\newblock \emph{The Annals of Probability}, 36\penalty0 (1):\penalty0 127--142,
  2008.

\bibitem[Zomorodian and Carlsson(2005)]{zomorodian2005computing}
A.~Zomorodian and G.~Carlsson.
\newblock Computing persistent homology.
\newblock \emph{Discrete \& Computational Geometry}, 33\penalty0 (2):\penalty0
  249--274, 2005.

\end{thebibliography}
\end{document}